\algnewcommand\algorithmicparfor{\textbf{parallel for}}
\algnewcommand\algorithmicpardo{\textbf{do}}
\algrenewcommand\algorithmicrequire{\textbf{Input:}}
\algrenewcommand\algorithmicensure{\textbf{Output:}}
\newcommand{\R}[0]{\mathbb{R}}
\title{Avoiding discretization issues for nonlinear eigenvalue problems}
\author{Matthew J. Colbrook\thanks{DAMTP, University of Cambridge, Cambridge, CB3 0WA. (\email{m.colbrook@damtp.cam.ac.uk})}
\and Alex Townsend\thanks{Department of Mathematics, Cornell University, Ithaca, NY 14853. (\email{townsend@cornell.edu})}}
\begin{document}

\maketitle

\begin{abstract}
The first step when solving an infinite-dimensional eigenvalue problem is often to discretize it. We show that one must be extremely careful when discretizing nonlinear eigenvalue problems. Using examples, we show that discretization can: (1) introduce spurious eigenvalues, (2) entirely miss spectra, and (3) bring in severe ill-conditioning. While there are many eigensolvers for solving matrix nonlinear eigenvalue problems, we propose a solver for general holomorphic infinite-dimensional nonlinear eigenvalue problems that avoids discretization issues, which we prove is stable and converges. Moreover, we provide an algorithm that computes the problem's pseudospectra with explicit error control, allowing verification of computed spectra. The algorithm and numerical examples are publicly available in \texttt{infNEP}, which is a software package written in MATLAB.
\end{abstract}

\begin{keywords}
Nonlinear eigenproblems, spectral pollution, infinite-dimensional linear algebra
\end{keywords}

\begin{AMS}
35P30, 65N25, 65N30, 47A10
\end{AMS}

\section{Introduction}\label{sec:introduction}
Many nonlinear eigenvalue problems (NEPs) are derived from discretizing an infinite-dimensional problem. In fact, 25 out of the 52 NEPs from the NLEVP collection are derived by discretizing a continuous problem such as a differential eigenvalue problem~\cite{betcke2013nlevp}. The analysis typically centers on how to solve the resulting finite-dimensional NEP after discretization. 
However, as we show in this paper, discretizing an infinite-dimensional NEP can introduce serious problems. It can modify, destabilize, or destroy the desired eigenvalues, leading to the computed eigenvalues misrepresenting those of the original continuous problem (see~\cref{tab:woes} for a list of issues that we demonstrate in~\cref{sec:Examples} for six examples).

Given a non-empty open set $\Omega\subset\mathbb{C}$ and a matrix-valued function $F:\Omega\rightarrow \mathbb{C}^{n\times n}$, the matrix NEP consists of finding eigenvalues $\lambda\in\Omega$ and nonzero eigenvectors $v\in\mathbb{C}^n$ so that $F(\lambda)v=0$. There are many applications of NEPs in mechanical vibrations~\cite{lancaster2002lambda}, fluid-solid interactions~\cite{voss2002rational}, photonic crystals~\cite{sakoda2001photonic}, time-delay systems~\cite{jarlebring2008spectrum}, resonances~\cite{bindel2006theory}, and numerous other areas~\cite{steinbach2009boundary,kukelova2008polynomial,mehrmann2011nonlinear}. Many of these matrix NEPs are derived from discretizing differential operators, where nonlinearities arise from eigenvalue-dependent boundary conditions~\cite{botchev2009svd}, material parameters~\cite{engstrom2010complex}, particular basis functions~\cite{betcke2005reviving}, or truncating an infinite domain with transparent boundary conditions~\cite{liao2010nonlinear}. 

In this paper, we propose a solver for infinite-dimensional NEPs, which is a variant of the contour-based algorithm called Beyn's method~\cite{beyn2012integral}. Rather than first discretizing the NEP, our algorithm delays discretization until the last possible moment (see~\cref{sec:BeynsMethod}). By delaying discretization, we avoid modifying, destabilizing, and destroying eigenvalues and provably compute them accurately (see~\cref{sec:stab_analysis}).

We focus on NEPs that involve finding eigenvalues $\lambda\in\mathbb{C}$ and nonzero eigenfunctions\footnote{We call them ``eigenfunctions'' even though they may not live inside a function space.} $u\in\mathcal{H}$ such that
\begin{equation}\label{eq:NonlinearEigenproblem}\setlength\abovedisplayskip{6pt}\setlength\belowdisplayskip{6pt}
T(\lambda) u = 0, \qquad T(\lambda) : \mathcal{D}(T)\mapsto \mathcal{H}.
\end{equation} 
For each fixed $\lambda$, $T(\lambda)$ is a closed linear operator acting on a Hilbert space $\mathcal{H}$. To capture unbounded operators, such as differential operators, we assume that $T(\lambda)$ has a densely-defined domain $\mathcal{D}(T)\subset\mathcal{H}$. For a non-empty open subset $\Omega\subset\mathbb{C}$, we assume that $\Omega\ni\lambda \mapsto T(\lambda)u$ is holomorphic for each fixed $u\in\mathcal{D}(T)$~\cite[p.~375]{kato2013perturbation}. These theoretical assumptions extend the usual assumptions for matrix NEPs~\cite{guttel2017nonlinear} and allow us to develop a contour-based eigensolver for~\cref{eq:NonlinearEigenproblem} (see~\cref{sec:InfDimEigVals}). There are many families of differential NEPs that satisfy our theoretical assumption, such as boundary NEPs for partial differential equations, where the variable coefficients and boundary conditions depend holomorphically on the eigenvalue parameter $\lambda$~\cite{mennicken2003non,solov2006preconditioned}.\footnote{While the assumption that $\lambda \mapsto T(\lambda)u$ is holomorphic for each $\lambda$ uses a fixed domain $\mathcal{D}(T)$, it captures eigenvalue-dependent boundary conditions by combining the differential operator and the boundary operator to a two-component operator defined on a fixed space~\cite{schafke1965s}.}

The spectrum of $T$, denoted by $\Lambda(T)$, is the set of points $\lambda\in\Omega$ such that $T(\lambda)$ is not invertible. The resolvent set $\rho(T)=\Omega\setminus\Lambda(T)$ is relatively open in $\Omega$ and $T(z)^{-1}$ is bounded holomorphic for $z\in\rho(T)$~\cite[p.~367]{kato2013perturbation}. Because we deal with operators acting on infinite-dimensional spaces, $\Lambda(T)$ may contain points that are not eigenvalues satisfying~\cref{eq:NonlinearEigenproblem}. However, if $T(z)$ is a Fredholm operator for each $z\in\Omega$ and $\Lambda(T)\neq\Omega$, then $\Lambda(T)$ consists of isolated points that are eigenvalues with finite algebraic and geometric multiplicities~\cite{grigorieff1973approximation}. In this case, the spectrum is discrete~\cite[Sec.~2]{guttel2017nonlinear} and many of the finite-dimensional NEP theorems have an infinite-dimensional analogue~\cite{jeggle1977discrete}. In particular, when $T(z)$ is a Fredholm operator for each $z\in\Omega$ and $\Lambda(T)\neq\Omega$, there is a statement of Keldysh's theorem for infinite-dimensional NEPs (see~\cref{thm:Keldysh}) that underpins our contour-based NEP solver. Since Fredholm operators remain Fredholm after small perturbations~\cite[Thm.~1]{kato1958perturbation}, our assumptions are not brittle and allow for the design of numerical methods. The set of points $\lambda\in\Omega$ for which $T(\lambda)$ is not Fredholm is known as the essential spectrum. 

\begin{table} 
\caption{Discretization issues encountered in our six examples from the NLEVP collection. Our proposed InfBeyn algorithm avoids these issues using an infinite-dimensional approach (see~\cref{sec:BeynsMethod}).\vspace{-2mm}} 
\centering
\begin{tabular}{cc}
\toprule
Example & Observed discretization woes \\ 
\Xhline{2\arrayrulewidth}
\multirow{ 2}{*}{\texttt{acoustic\_wave\_1d}} & spurious eigenvalues \\ 
 & slow convergence \\
\midrule
\multirow{ 2}{*}{\texttt{acoustic\_wave\_2d}} & spurious eigenvalues \\ 
 & wrong multiplicity \\
\midrule
\multirow{ 3}{*}{\texttt{butterfly}} & spectral pollution \\ 
 & missed spectra\\ & wrong pseudospectra \\
\midrule
\multirow{ 2}{*}{\texttt{damped\_beam}} & slow convergence \\ 
 & resolved eigenfunctions with inaccurate eigenvalues\\
\midrule
\texttt{loaded\_string} & ill-conditioning from discretization\\
\midrule
\multirow{ 3}{*}{\texttt{planar\_waveguide} } & collapse onto ghost essential spectrum \\ 
 & failure for accumulating eigenvalues \\
& spectral pollution \\
\bottomrule
\end{tabular}\vspace{-2mm}
\label{tab:woes}
\end{table} 

While problems caused by discretization appear to be common folklore, there is no systematic study of their effects. We look at several examples for a lucid illustration (see~\cref{sec:Examples}). There are several troubling problems caused by the discretization of NEPs that deserve careful attention: 

\begin{description}[noitemsep,leftmargin=1em]
\item[$\bullet$ Spurious eigenvalues.] Spurious eigenvalues that are unrelated to the infinite-dimensional problem may arise due to discretization. These spurious eigenvalues can remain even as the discretization size increases to infinity, a phenomenon known as spectral pollution. This can occur even when the spectrum is purely discrete. 

\item[$\bullet$ Spectral invisibility.] Spectral invisibility refers to some (or all) of the eigenvalues of the NEP being missed by the discretization, even as the discretization size increases to infinity. Regions of spectra can be ``invisible'' to discretizations.

\item[$\bullet$ Ill-conditioning.] The infinite-dimensional NEP may have well-conditioned eigenvalues, while the discretized problem has ill-conditioned eigenvalues. Hence no, even stable, finite-dimensional solver can overcome this issue post-discretization.

\item[$\bullet$ Super-slow convergence.] In practice, we desire that the eigenvalues of the discretization rapidly converge to those of the infinite-dimensional problem. If one has slow convergence, it can be computationally prohibitive to compute eigenvalues representing the infinite-dimensional ones. Even if eigenfunctions are resolved accurately, the corresponding eigenvalues may be inaccurate.

\item[$\bullet$ Wrong multiplicity.] Eigenvalues of the infinite-dimensional problem may be well-approximated using discretizations but with the wrong multiplicity.

\item[$\bullet$ Accumulating eigenvalues.] The discrete spectrum can accumulate at the essential spectrum. This can be challenging for discretizations to resolve accurately.

\item[$\bullet$ Ghost essential spectra.] Many infinite-dimensional NEPs with discrete spectra arise from an underlying spectral problem that has essential spectra. For example, this is common in domain truncation for resonance computations. The eigenvalues of the discretized NEP may collapse onto the ``ghost'' essential spectrum of the underlying problem.
\end{description} 

To overcome these discretization issues, we introduce an infinite-dimensional analogue of Beyn's method (see~\cref{sec:BeynsMethod}). We call our algorithm InfBeyn (see~\cref{sec:BeynsMethod}), which is based on contour integration and adaptively discretizes only linear equations. It computes eigenvalues inside a region in the complex plane by integrating along the region's boundary. It forms a small generalized matrix eigenvalue problem whose eigenvalues match those of the infinite-dimensional NEP inside the contour. While previous approaches to infinite-dimensional NEPs are predominantly problem-specific, InfBeyn provides a general method that converges for any holomorphic NEP in regions where the spectrum is discrete. Moreover, we use techniques from infinite-dimensional randomized numerical linear algebra to prove the method is stable. Proving stability is a well-known problem for contour-based methods for finite-dimensional problems, but it is manageable in infinite dimensions.

In addition to computing eigenvalues of infinite-dimensional NEPs, we also compute pseudospectral sets to give us a more comprehensive understanding of the stability of a system's spectrum (see~\cref{sec:Pseudospectra}). Discretizing NEPs can also cause issues here (see~\cref{sec:butterfly}) and the pseudospectral sets for a discretization may be misleading, even when spectral pollution and invisibility do not occur (see~\cref{sec:OrrSommerfeld}). We provide the first general algorithm that converges to the pseudospectra of NEPs, even when the spectrum is not discrete. Moreover, the algorithm's output is guaranteed to be inside the true pseudospectral sets, thus directly verifying the computation, and that is how we verify the eigenvalues computed by InfBeyn.

The paper is structured as follows: In~\cref{sec:InfDimEigVals}, we detail infinite-dimensional tools for NEPs, including Keldysh's theorem, our InfBeyn algorithm, and how to compute pseudospectra. In~\cref{sec:stab_analysis}, we analyze the stability of InfBeyn by deriving pseudospectral set inclusions. In~\cref{sec:Examples}, we cover six examples from the NLEVP collection derived from infinite-dimensional NEPs and illustrate discretization woes. We conclude and point to future developments in~\cref{sec:Conclusion}. To accompany this paper, we have developed a publicly available MATLAB package called \texttt{infNEP} available at~\cite{GithubRepo}, which includes all of the examples and figures of this paper.

\section{Computational tools for infinite-dimensional NEPs}\label{sec:InfDimEigVals} 
We first state Keldysh's Theorem (see~\cref{thm:Keldysh}) before describing our infinite-dimensional analogue for Beyn's method (see~\cref{sec:BeynsMethod}) and procedure for computing pseudospectra (see~\cref{sec:Pseudospectra}). Rather than directly discretizing the NEP, we delay discretization until the last possible moment and only discretize linear equations. 

\subsection{Keldysh's theorem}\label{sec:Keldysh}
For contour-based methods for NEPs, Keldysh's theorem is an important expansion~\cite{keldysh1951characteristic,keldysh1971completeness} that is used to show that the NEP can be reduced to a linear generalized matrix eigenvalue problem. It can be stated as follows:
\begin{theorem}
Let $\Omega\subset\mathbb{C}$ be a nonempty domain and $T$ in~\cref{eq:NonlinearEigenproblem} be such that $\Omega\ni\lambda\mapsto T(\lambda)u$ is holomorphic for each fixed $u\in\mathcal{D}(T)$. If $T(\lambda)$ is Fredholm for all $\lambda\in\Omega$ and $m$ is the sum of the algebraic multiplicities of the eigenvalues inside $\Omega$,
\begin{equation}\setlength\abovedisplayskip{6pt}\setlength\belowdisplayskip{6pt}
T(z)^{-1} = V(zI-J)^{-1}W^*+R(z),\quad \forall z\in\rho(T).
\label{eq:Keldysh}
\end{equation}
Here $V,W$ are quasimatrices with $m$ columns of suitably normalized generalized right and left eigenvectors, respectively, $J\in\mathbb{C}^{m\times m}$ is a block diagonal matrix of the Jordan blocks of each eigenvalue inside $\Omega$, and $R(z)$ is a holomorphic remainder.
\label{thm:Keldysh} 
\end{theorem}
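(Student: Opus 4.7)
My plan is to follow the classical Gohberg--Sigal strategy: reduce the infinite-dimensional holomorphic Fredholm situation to a finite-dimensional one locally at each eigenvalue, extract the principal part of $T(z)^{-1}$ there in terms of generalized eigenvectors, and then patch the local contributions into a single meromorphic expansion valid on $\Omega$.

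First, I would use the holomorphic Fredholm theorem. Since $T(z)$ is Fredholm for every $z\in\Omega$ and $\Lambda(T)\neq\Omega$, the index of $T(z)$ is constant on $\Omega$ and equals $0$ (pick any $z\in\rho(T)$), and $\Lambda(T)$ is a discrete subset of $\Omega$ with each eigenvalue $\lambda_j$ of finite algebraic multiplicity $m_j$. Thus the total multiplicity inside $\Omega$ is $m = \sum_j m_j < \infty$, and the statement is nontrivial only on the finite set $\{\lambda_j\}$.

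Next, at each $\lambda_j$, I would invoke the Gohberg--Sigal local Smith--Macaev factorization: on a neighborhood of $\lambda_j$ there exist holomorphic operator-valued functions $E_j(z)$ and $F_j(z)$ with holomorphic inverses, rank-one projections $P_{j,1},\dots,P_{j,r_j}$, and positive integers $\kappa_{j,1}\ge\cdots\ge\kappa_{j,r_j}$ (the partial multiplicities, with $\sum_\ell \kappa_{j,\ell}=m_j$) such that
\[
T(z) \;=\; E_j(z)\Bigl(I - P_j + \sum_{\ell=1}^{r_j}(z-\lambda_j)^{\kappa_{j,\ell}} P_{j,\ell}\Bigr)F_j(z), \qquad P_j=\sum_{\ell} P_{j,\ell}.
\]
Inverting the middle factor term-by-term and collecting singular contributions, one finds that the principal part of $T(z)^{-1}$ at $\lambda_j$ equals $V_j(zI-J_j)^{-1}W_j^*$, where $J_j$ is block-diagonal with Jordan blocks of sizes $\kappa_{j,1},\dots,\kappa_{j,r_j}$, the columns of $V_j$ form canonical systems of right generalized eigenvectors (read off from $F_j(\lambda_j)^{-1}$ applied to the ranges of the $P_{j,\ell}$), and the columns of $W_j$ form the corresponding left chains (read off from $E_j(\lambda_j)^{-*}$), biorthonormalized so that the equality of principal parts is exact.

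Finally, assemble $V=[V_1\mid V_2\mid\cdots]$ and $W=[W_1\mid W_2\mid\cdots]$ as quasimatrices with $m$ columns, set $J=\mathrm{diag}(J_j)\in\mathbb{C}^{m\times m}$, and define $R(z)=T(z)^{-1}-V(zI-J)^{-1}W^*$ on $\rho(T)$. By the local analysis, the singular parts of the two terms cancel at every $\lambda_j$, so $R(z)$ has removable singularities at each eigenvalue in $\Omega$ and extends to a holomorphic operator-valued function on all of $\Omega$, which yields \cref{eq:Keldysh}. The main obstacle is the second step: establishing the Gohberg--Sigal factorization in the operator-valued holomorphic Fredholm setting with enough precision to pin down the biorthogonal normalization of $V_j,W_j$ so that the principal part is \emph{exactly} $V_j(zI-J_j)^{-1}W_j^*$, rather than only equal up to an invertible constant matrix factor. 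This step is where the Fredholm hypothesis and index $0$ are used to reduce locally to a finite-dimensional degenerate block plus a holomorphically invertible complement; once it is in hand, the analytic-continuation argument of the final step is routine.
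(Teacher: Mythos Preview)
The paper does not prove this theorem. It states \cref{thm:Keldysh} as a known result, citing Keldysh's original papers and pointing to \cite[Sec.~2.4]{guttel2017nonlinear} for the precise form of $V$, $W$, and $J$; there is no proof in the paper against which to compare your attempt.

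On its own merits, your outline is the standard Gohberg--Sigal route and is sound. The three stages you describe---analytic Fredholm alternative to obtain discreteness and index zero, local Smith--Macaev factorization at each eigenvalue to identify the principal part of $T(z)^{-1}$ as $V_j(zI-J_j)^{-1}W_j^*$, and then subtraction plus a removable-singularity argument for $R(z)$---are exactly how the classical proof proceeds. The obstacle you flag, namely fixing the biorthogonal normalization so that the principal part is \emph{exactly} $V_j(zI-J_j)^{-1}W_j^*$ rather than equal only up to an invertible matrix factor, is genuine but standard: the normalization is pinned down by reading the root chains not just from $F_j(\lambda_j)^{-1}$ and $E_j(\lambda_j)^{-*}$ but from the full Taylor jets of $F_j^{-1}$ and $E_j^{-1}$ at $\lambda_j$, which enforces the Keldysh biorthogonality relations between left and right chains. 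With that refinement your plan goes through, and in fact your sketch already contains more proof content than the paper itself provides for this statement.
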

The precise form of $V,W$ and $J$ can be found in~\cite[Sec.~2.4]{guttel2017nonlinear}. The expansion shows that $T(z)^{-1}$ can be expressed in terms of $(zI-J)^{-1}$ up to a holomorphic remainder. This allows us to use contour integration involving $T(z)^{-1}$ to form an $m\times m$ generalized eigenvalue problem that shares the same eigenvalues as~\cref{eq:NonlinearEigenproblem} inside $\Omega$.

\subsection{An infinite-dimensional analogue of Beyn's method}\label{sec:BeynsMethod}
Beyn's method is efficient for solving matrix NEPs and is particularly useful when one wants to compute eigenvalues inside a known region~\cite{beyn2012integral}. This contour-based method uses Keldysh's expansion in~\cref{eq:Keldysh} to compute a smaller linear pencil whose spectral properties match those of the original problem inside the region enclosed by the contour. If $T(z)\in\mathbb{C}^{N\times N}$ is a matrix NEP, then it first computes the following two matrices: 
\begin{equation}\setlength\abovedisplayskip{6pt}\setlength\belowdisplayskip{6pt}
A_0 = \frac{1}{2\pi i}\int_{\Gamma} T(z)^{-1} V\, dz, \qquad A_1 = \frac{1}{2\pi i}\int_{\Gamma} zT(z)^{-1} V\, dz, 
\label{eq:discreteBeyn} 
\end{equation}
where $\Gamma$ is a closed rectifiable Jordan curve inside $\Omega$ enclosing $m$ eigenvalues of $T(z)$ (counted via algebraic multiplicity). Here, $V\in\mathbb{C}^{N\times (m+p)}$ is a matrix with $m\ll N$ that is often selected at random with independent standard Gaussian entries, and $p$ is a small oversampling factor (e.g., $p=5$) that we recommend for the robustness of the method.  After computing $A_0$ and $A_1$, Beyn's method solves an $m\times m$ generalized matrix eigenvalue problem related to $A_0$ and $A_1$. 

The usual way to apply Beyn's method is first to discretize the NEP and then use Beyn's method on the discretization. The dominating computational cost of Beyn's method is solving linear systems.  However, to overcome discretization concerns, we prefer an infinite-dimensional analogue of Beyn's method that we now describe. There are three essential ingredients to Beyn's method, which we generalized in turn: 

\begin{itemize}[leftmargin=*,noitemsep]

\item[(i)]\textbf{Randomly generated test functions.}
Beyn's method uses a random matrix $V\in\mathbb{C}^{N\times (m+p)}$ whose columns are standard Gaussian test vectors.  An analogue of standard Gaussian vectors is functions drawn from a Gaussian process (GP)~\cite{boulle2022a}. A function, $g$, drawn from a GP is an infinite-dimensional analogue of a vector drawn from a multivariate Gaussian distribution in the sense that samples from $g$ follow a multivariate Gaussian distribution. We describe the process for the case that $\mathcal{H}$ consists of $L^2(\mathcal{X})$ functions on a domain $\mathcal{X}\subseteq\mathbb{R}^d$ and the process is analogous for other Hilbert spaces. We write $g \sim \mathcal{GP}(0,K)$ for some continuous positive definite kernel $K:\mathcal{X} \times \mathcal{X} \to \R$ if for any $x_1, \ldots, x_n \in \mathcal{X}$,
$(g(x_1), \ldots, g(x_n))$ follows a multivariate Gaussian distribution with mean $(0, \ldots, 0)$ and covariance $K_{ij} = K(x_i, x_j)$ for $1 \leq i,j \leq n$. We typically use the squared exponential covariance kernel given by
\begin{equation}\setlength\abovedisplayskip{6pt}\setlength\belowdisplayskip{6pt}
K_{\mathrm{SE}}(x,y) = \frac{1}{\mu \sqrt{2 \pi}}\exp\left( -(x-y)^2/(2\mu^2) \right),\qquad \mu >0, 
\end{equation}
where $s_\mu = \mu \sqrt{2 \pi}$ is a scaling factor. The length scale parameter is denoted by $\mu$ and determines the correlation between samples of $g$. If $\mu$ is large, the samples $g(x_1), \ldots, g(x_n)$ are highly correlated, and $g$ is close to a constant function. If $\mu$ is small, then samples of $g$ are only weakly correlated, and $g$ is usually a highly oscillatory function.  We use $\mathcal{GP}(0,K_\mathrm{SE})$ to generate random functions in InfBeyn. 

\item[(ii)]\textbf{Contour integration.} 
InfBeyn computes the following two quasimatrices:\footnote{A quasimatrix is a matrix whose columns are functions instead of vectors. A $\mathcal{X}\times m$ quasimatrix has $m$ columns, and each column is a function defined on $\mathcal{X}$.}
\begin{equation}\label{eqn:A0}\setlength\abovedisplayskip{6pt}\setlength\belowdisplayskip{6pt}
A_0 = \frac{1}{2\pi i}\int_{\Gamma} T(z)^{-1} \mathcal{V}\, dz, \qquad A_1 = \frac{1}{2\pi i}\int_{\Gamma} zT(z)^{-1} \mathcal{V}\, dz, 
\end{equation}
where $\mathcal{V}$ is a $\mathcal{X}\times (m+p)$ quasimatrix columns so that each column is a function independently drawn from $\mathcal{GP}(0,K_\mathrm{SE})$. 

\item[(iii)]
\textbf{Solving a generalized matrix eigenvalue problems.} 
In InfBeyn, $A_0$ and $A_1$ are quasimatrices with $m + p$ columns. The related $m\times m$ linear pencil is constructed using the economized singular value decomposition (SVD) of $A_0$~\cite[Sec.~4]{townsend2015continuous}, i.e., 
\begin{equation}\setlength\abovedisplayskip{6pt}\setlength\belowdisplayskip{6pt}
A_0 = \mathcal{U} \Sigma_0 V_0^*,
\label{eq:continuousSVD} 
\end{equation}
where $\mathcal{U}$ is a quasimatrix with $m$ orthonormal columns in $L^2(\mathcal{X})$ and $V_0\in\mathbb{C}^{(m+p)\times m}$ is a matrix with orthonormal columns. We then solve the $m\times m$ generalized eigenvalue problem 
\begin{equation}\setlength\abovedisplayskip{6pt}\setlength\belowdisplayskip{6pt}
\mathcal{U}^*A_1V_0x = \lambda \Sigma_0 x,\qquad x\neq 0. 
\label{eq:FinalEigProblem} 
\end{equation} 
\end{itemize}

\begin{algorithm}[t]
\textbf{Input:} Nonlinear eigenvalue problem $T(z)u = 0$, contour $\Gamma$ enclosing $m$ eigenvalues.
\begin{algorithmic}[1]
\State Draw a $\mathcal{X}\times (m+p)$ quasimatrix whose columns are independently drawn from the Gaussian process  $\mathcal{GP}(0,K_\textrm{SE})$. 
\State Compute quasimatrices $A_0$ and $A_1$ in~\cref{eqn:A0} with a quadrature rule~\cref{eqn:approx_Beyn}. 
\State Compute the $m$-truncated SVD of $A_0$ in~\cref{eq:continuousSVD}. 
\State Form and solve the $m\times m$ generalized eigenvalue problem in~\cref{eq:FinalEigProblem} for eigenvalues $\lambda_j$ and eigenvectors $x_j\in\mathbb{C}^{m}$.
\end{algorithmic} \textbf{Output:} Eigenvalues $\lambda_1,\dots,\lambda_m$ in $\Omega$ and eigenfunctions $u_j=\mathcal{U} \Sigma_0 x_j$.
\caption{InfBeyn: Our infinite-dimensional Beyn's method for NEPs.}\label{alg:Beyn's method}
\end{algorithm}

For practical computation, InfBeyn approximates the contour integral in~\cref{eqn:A0} with a quadrature rule such as a mapped trapezoidal rule. Given a quadrature rule with nodes $z_1,\ldots,z_\ell$ and weights $w_1,\ldots,w_\ell$, one can approximate them by
\begin{equation}\setlength\abovedisplayskip{6pt}\setlength\belowdisplayskip{6pt}
\label{eqn:approx_Beyn}
A_0\approx\tilde{A}_0 = \frac{1}{2\pi i}\sum_{k=1}^\ell w_kT(z_k)^{-1}\mathcal{V}, \qquad A_1\approx\tilde{A}_1 = \frac{1}{2\pi i}\sum_{k=1}^\ell w_kz_kT(z_k)^{-1}\mathcal{V}.
\end{equation}
Since ${\rm rank}(A_j) = m$ for $j = 0,1$, we know that $\sigma_{m+1}(\tilde{A}_j)\approx 0$. By performing an $m$-truncated SVD, we ensure that $\tilde{A}_j$ is of rank $m$ for $j = 0,1$. The approach is summarized~\cref{alg:Beyn's method} for the case of simple eigenvalues. In the more general case, InfBeyn recovers an $m\times m$ linear pencil~\cref{eq:FinalEigProblem} with the same spectral properties as $T$ inside $\Gamma$.

For the majority of examples in this paper, $T(z)^{-1}\mathcal{V}$ involves solving a linear differential equation with $m+p$ right-hand sides. We do this by adaptively discretizing the differential equations and solving a linear system~\cite{olver2013fast}. To refine the accuracy of the final computed eigenvalues while keeping computational costs low, we first compute a rough estimate of the eigenvalues so that we can isolate them inside a small circular contour. Then, we repeat InfBeyn on each eigenvalue or a small cluster of eigenvalues. There are two reasons for this approach. First, our experiments find that this is more computationally efficient than increasing the oversampling parameter $p$ or the number of quadrature nodes $\ell$. Second, the analysis in~\cref{sec:stab_analysis} reveals that this is an important strategy for NEPs since the matrix $VW^*$ from~\cref{eq:Keldysh} may become ill-conditioned or rank degenerate if the contour is too large.

Several techniques exist for estimating the number of eigenvalues $m$. If $\Lambda(T)$ is discrete in the interior of $\Gamma$ and $\Gamma$ does not intersect $\Lambda(T)$, then
\begin{equation}\setlength\abovedisplayskip{6pt}\setlength\belowdisplayskip{6pt}
\label{eq:trace_formula}
m=\mathrm{Trace}\left(\frac{1}{2\pi i}\int_{\Gamma} T'(z)T(z)^{-1}\,dz\right).
\end{equation}
This can be approximated using an infinite-dimensional analogue of Hutchinson's method for trace estimation~\cite{contHutch}. Consequently, we assume that $m$ is known throughout the paper and focus on the algorithmic and theoretical aspects of our infinite-dimensional analogue of Beyn's method.

\subsection{Pseudospectra for nonlinear eigenvalue problems}\label{sec:Pseudospectra}
Pseudospectral sets are a mathematical quantity that provides insight into the stability of linear and nonlinear systems, including eigenvalue problems~\cite{trefethen2005spectra}. Consider the set of bounded holomorphic perturbations of $T$ of norm at most $\epsilon>0$, i.e.,
\begin{equation}\setlength\abovedisplayskip{6pt}\setlength\belowdisplayskip{6pt}
\label{gen_analytic_pert}
\mathcal{A}(\epsilon)=\left\{E:\Omega\rightarrow\mathcal{B}(\mathcal{H}) \text{ holomorphic}\, \Big\vert \, \sup_{z\in \Omega}\|E(z)\|<\epsilon\right\}.
\end{equation}
The $\epsilon$-pseudospectrum of $T$ is the following union of spectra of perturbed operators:
$$\setlength\abovedisplayskip{6pt}\setlength\belowdisplayskip{6pt}
\Lambda_{\epsilon}(T)\coloneqq \bigcup_{E\in \mathcal{A}(\epsilon)}\Lambda(T+E).
$$
One can show that this set remains unchanged if we drop the condition that perturbations are holomorphic. It is also common to consider structured perturbations~\cite{tisseur2001structured,green2006pseudospectra,michiels2006pseudospectra,higham2002more,wagenknecht2008structured}, which can additionally be dealt with using the infinite-dimensional techniques we describe in this section.

For linear matrix eigenvalue problems, if the pseudospectra are small around an eigenvalue, small perturbations do not perturb that eigenvalue very far. However, if the pseudospectra are large around an eigenvalue, then a small perturbation can cause that eigenvalue to move far away from its original position. A similar interpretation exists for NEPs in regions where $\Lambda(T)$ is discrete. That is, for sufficiently small $\epsilon$ (so that the spectrum remains discrete under perturbations), $\Lambda_{\epsilon}(T)$ can be equivalently defined via a backward error, i.e., 
$$\setlength\abovedisplayskip{6pt}\setlength\belowdisplayskip{6pt}
\Lambda_{\epsilon}(T)=\inf\left\{z\in\Omega \, \big\vert \, \eta_T(z)<\epsilon\right\},
$$
where $\eta_T(z)$ is a backward error defined in~ \cite{tisseur2000backward,higham1998structured}. An alternative characterization of $\Lambda_{\epsilon}(T)$ also holds when the spectrum is not discrete. The following is a straightforward generalization of~\cite[Prop.~4.1]{bindel2015localization} and~\cite[Thm.~1]{michiels2006pseudospectra} to infinite dimensions:
\begin{theorem}
\label{thm:equiv_pseudospec}
Let $\epsilon>0$. With perturbations measured as in~\cref{gen_analytic_pert}, we have
$$\setlength\abovedisplayskip{6pt}\setlength\belowdisplayskip{6pt}
\Lambda_{\epsilon}(T)=\left\{z\in\Omega \, \big\vert \, \|T(z)^{-1}\|^{-1}<\epsilon\right\},
$$
where we define $\|T(z)^{-1}\|^{-1}=0$ if $z\in\Lambda(T)$.
\end{theorem}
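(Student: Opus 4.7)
The plan is to prove the two set inclusions separately. Both directions exploit the factorization $T(z)+E(z)=T(z)\bigl(I+T(z)^{-1}E(z)\bigr)$ together with a Neumann-series argument, as in the familiar linear-operator setting, but with care taken for the unbounded domain $\mathcal{D}(T)$ and the requirement that perturbations be holomorphic on $\Omega$.

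For the $\subseteq$ direction, I would start by taking $z\in\Lambda_\epsilon(T)$, so that $z\in\Lambda(T+E)$ for some $E\in\mathcal{A}(\epsilon)$. If $z\in\Lambda(T)$ there is nothing to show, since by convention $\|T(z)^{-1}\|^{-1}=0<\epsilon$. Otherwise $T(z)$ is invertible and I would write $T(z)+E(z)=T(z)\bigl(I+T(z)^{-1}E(z)\bigr)$ as an equality of operators on $\mathcal{D}(T)$. Under the assumption $\|T(z)^{-1}E(z)\|<1$, the Neumann series would supply a bounded inverse of $I+T(z)^{-1}E(z)$; a brief check using that $T(z)^{-1}E(z)$ sends $\mathcal{H}$ into $\mathcal{D}(T)$ shows this inverse preserves $\mathcal{D}(T)$, whence $T(z)+E(z)$ would be invertible, contradicting $z\in\Lambda(T+E)$. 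Hence $\|T(z)^{-1}E(z)\|\ge 1$, which yields $\|E(z)\|\ge\|T(z)^{-1}\|^{-1}$, and since $\|E(z)\|\le\sup_{w\in\Omega}\|E(w)\|<\epsilon$ we conclude $\|T(z)^{-1}\|^{-1}<\epsilon$.

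For the $\supseteq$ direction, assume $\|T(z)^{-1}\|^{-1}<\epsilon$. The case $z\in\Lambda(T)$ is handled by the zero perturbation. Otherwise I would use the definition of the operator norm to pick a unit vector $u\in\mathcal{H}$ with $\|T(z)^{-1}u\|>1/\epsilon$, and set $v=T(z)^{-1}u/\|T(z)^{-1}u\|\in\mathcal{D}(T)$ and $\eta=1/\|T(z)^{-1}u\|<\epsilon$, so that $T(z)v$ has norm $\eta$. Define the rank-one bounded operator $E_0\in\mathcal{B}(\mathcal{H})$ by $E_0 w=-\langle w,v\rangle T(z)v$, which has norm exactly $\eta$, and let $E$ be the constant-in-$z$ perturbation $E(w)\equiv E_0$. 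This perturbation is trivially holomorphic with $\sup_w\|E(w)\|=\eta<\epsilon$, so $E\in\mathcal{A}(\epsilon)$, and a direct computation gives $(T(z)+E_0)v=0$. Hence $z\in\Lambda(T+E)\subseteq\Lambda_\epsilon(T)$.

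The main subtlety to watch is the interplay between the closed unbounded operator $T(z)$ and the bounded perturbations $E(z)$: one must confirm that the factorization $T(z)+E(z)=T(z)\bigl(I+T(z)^{-1}E(z)\bigr)$ is a genuine equality of closed operators with common domain $\mathcal{D}(T)$, and that the Neumann-series inverse preserves $\mathcal{D}(T)$. Once that bookkeeping is dealt with, the argument collapses to the standard pseudospectral calculation of~\cite{bindel2015localization,michiels2006pseudospectra}. A pleasant byproduct of the construction is that the witness perturbation $E$ can be taken constant in $z$, which simultaneously shows that restricting $\mathcal{A}(\epsilon)$ to holomorphic perturbations (as opposed to merely bounded ones) does not shrink $\Lambda_\epsilon(T)$.
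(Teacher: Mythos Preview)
Your proposal is correct and follows essentially the same route as the paper: the $\subseteq$ direction is the identical Neumann-series/contradiction argument via the factorization $T(z)+E(z)=T(z)\bigl(I+T(z)^{-1}E(z)\bigr)$, and your rank-one witness $E_0w=-\langle w,v\rangle T(z)v$ is, after unwinding the normalizations, exactly the paper's $E=-uv^*/\|v\|^2$. If anything you are slightly more careful than the paper about the unbounded-operator bookkeeping (preservation of $\mathcal{D}(T)$ under the Neumann inverse) and about noting that the perturbation can be taken constant in $z$.
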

\begin{proof} 
Suppose that $z\notin\Lambda(T)$ and $\|T(z)^{-1}\|^{-1}<\epsilon$. Then, there exists a vector $u\in\mathcal{H}$ of unit norm with $\|T(z)^{-1}u\|>\epsilon^{-1}$. Let $v=T(z)^{-1}u$ and set $E=-uv^*/\|v\|^2$. Then, $\|E\|=1/\|v\|<\epsilon$ and $[T(z)+E]v=0$ so $z\in \Lambda_{\epsilon}(T)$. Hence, we find that
$$\setlength\abovedisplayskip{6pt}\setlength\belowdisplayskip{6pt}
\left\{z\in\Omega \, \big\vert \, \|T(z)^{-1}\|^{-1}<\epsilon\right\}\subset \Lambda_{\epsilon}(T).
$$
For the reverse set inclusion, suppose for a contradiction that $z\in\Lambda_{\epsilon}(T)$ but that $\|T(z)^{-1}\|^{-1}\geq\epsilon$. Then  $z\in\Lambda(T+E)$, for some $E\in\mathcal{A}(\epsilon)$ and hence
$$\setlength\abovedisplayskip{6pt}\setlength\belowdisplayskip{6pt}
\|T(z)^{-1}E(z)\|\leq \|T(z)^{-1}\|\|E(z)\|\leq \|E(z)\|/\epsilon<1.
$$
Note that $T(z)+E(z)=T(z)(I+T(z)^{-1}E(z)),$ and hence by a Neumann series argument we see that $z\notin\Lambda(T+E)$, which is a contradiction.
\end{proof}

\cref{thm:equiv_pseudospec} leads to a method for computing $\Lambda_{\epsilon}(T)$ that avoids discretization issues. Let $\{\mathcal{P}_n\}$ be a sequence of increasing finite-rank orthogonal projections such that $\lim_{n\rightarrow\infty}\mathcal{P}_n^*\mathcal{P}_nu=u$ for any $u\in\mathcal{H}$. Let $\mathcal{U}_n$ be the range of $\mathcal{P}_n$ and suppose that $\mathcal{U}=\cup_{n\in\mathbb{N}}\mathcal{U}_n$ forms a core of $T(z)$ for any $z\in\Omega$. Then we consider the function
\begin{equation}\setlength\abovedisplayskip{6pt}\setlength\belowdisplayskip{6pt}
\label{def_gamma_n}
\gamma_n(z,T):=\min\left\{\sigma_{\mathrm{inf}}(T(z)\mathcal{P}_n^*),\sigma_{\mathrm{inf}}(T(z)^*\mathcal{P}_n^*)\right\},
\end{equation}
where $\sigma_{\mathrm{inf}}$ denotes the smallest singular value. The following theorem shows how these functions approximate $\|T(z)^{-1}\|^{-1}$ and hence can be used to compute pseudospectra. 

\begin{theorem}
\label{injection_moduli_theorem}
The functions $\gamma_n$ satisfy
$$\setlength\abovedisplayskip{6pt}\setlength\belowdisplayskip{6pt}
\gamma_n(z,T)\geq \|T(z)^{-1}\|^{-1}\quad\text{and}\quad \lim_{n\rightarrow\infty}\gamma_n(z,T)=\|T(z)^{-1}\|^{-1},
$$
where the convergence is monotonic from above and uniform on compact subsets of $\Omega$. Moreover, if $z\in\rho(T)\cup \partial \Lambda(T)$, then $\gamma_n(z,T)=\sigma_{\mathrm{inf}}(T(z)\mathcal{P}_n^*).$
\end{theorem}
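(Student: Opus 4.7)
The plan reduces the theorem to variational characterizations of the singular values. Since $\mathcal{P}_n^*$ is an isometric embedding of $\mathcal{U}_n$ into $\mathcal{H}$, I will use
\begin{equation*}
\sigma_{\mathrm{inf}}(T(z)\mathcal{P}_n^*)=\inf_{u\in\mathcal{U}_n,\,\|u\|=1}\|T(z)u\|,\qquad \sigma_{\mathrm{inf}}(T(z)^*\mathcal{P}_n^*)=\inf_{u\in\mathcal{U}_n,\,\|u\|=1}\|T(z)^*u\|,
\end{equation*}
together with the standard identity $\|T(z)^{-1}\|^{-1}=\min\{\sigma_{\mathrm{inf}}(T(z)),\sigma_{\mathrm{inf}}(T(z)^*)\}$, extended by $0$ on $\Lambda(T)$. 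Restricting the infimum from $\mathcal{D}(T(z))$ (resp.\ $\mathcal{D}(T(z)^*)$) to the smaller set $\mathcal{U}_n$ can only increase it, which gives the lower bound $\gamma_n(z,T)\ge\|T(z)^{-1}\|^{-1}$, and the monotone decrease $\gamma_{n+1}(z,T)\le\gamma_n(z,T)$ follows immediately from $\mathcal{U}_n\subseteq\mathcal{U}_{n+1}$.

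For pointwise convergence, the core hypothesis says that any unit $u\in\mathcal{D}(T(z))$ is approximated in graph norm by some $\phi_k\in\mathcal{U}$; after renormalization, $\|T(z)(\phi_k/\|\phi_k\|)\|\to\|T(z)u\|$, so $\sigma_{\mathrm{inf}}(T(z)\mathcal{P}_n^*)\downarrow\sigma_{\mathrm{inf}}(T(z))$ as $n\to\infty$. The symmetric argument for $T(z)^*$ (which implicitly uses the core property for $T(z)^*$, a point that must be made explicit in or read from the hypotheses) gives $\sigma_{\mathrm{inf}}(T(z)^*\mathcal{P}_n^*)\downarrow\sigma_{\mathrm{inf}}(T(z)^*)$, whence the minimum converges to $\|T(z)^{-1}\|^{-1}$. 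For uniform convergence on compact subsets of $\Omega$, I will use continuity of $\gamma_n(\cdot,T)$ (singular values of a holomorphic operator family vary continuously in the parameter) and continuity of $z\mapsto\|T(z)^{-1}\|^{-1}$ on $\Omega$ (from the holomorphy of the resolvent on $\rho(T)$ and its continuous extension by $0$ onto $\Lambda(T)$), then apply Dini's theorem for a monotone sequence of continuous functions converging pointwise to a continuous limit on a compact set.

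The final claim, $\gamma_n(z,T)=\sigma_{\mathrm{inf}}(T(z)\mathcal{P}_n^*)$ on $\rho(T)\cup\partial\Lambda(T)$, is the subtlest part and the main obstacle. A direct pointwise comparison of $\sigma_{\mathrm{inf}}(T(z)\mathcal{P}_n^*)$ and $\sigma_{\mathrm{inf}}(T(z)^*\mathcal{P}_n^*)$ is not one-sided in general, as elementary $2\times 2$ examples confirm, so I must use the boundary structure. For $z\in\partial\Lambda(T)$, $z$ lies in the approximate point spectrum of $T$, so $\sigma_{\mathrm{inf}}(T(z))=0$, and the core property then forces $\sigma_{\mathrm{inf}}(T(z)\mathcal{P}_n^*)\downarrow 0$ through approximate null vectors drawn from $\mathcal{U}_n$; this collapses the minimum to the $T(z)$ side on $\partial\Lambda(T)$. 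To transfer the equality into $\rho(T)$, I will exploit holomorphy of $T$ and continuity of the two injection moduli in $z$, propagating the inequality from $\partial\Lambda(T)$ into each component of $\rho(T)$ whose closure meets $\partial\Lambda(T)$. Making this propagation rigorous at each finite $n$, rather than only in the limit, is where the delicate work lies, and I anticipate it will need a subharmonicity or minimum-modulus argument for $z\mapsto\|T(z)^{-1}\|^{-1}$ to upgrade the boundary inequality to an interior one.
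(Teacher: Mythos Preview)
Your treatment of the lower bound, monotonicity, pointwise convergence via the core hypothesis, and uniform convergence via Dini's theorem is essentially the paper's argument; the paper also begins from the identity $\|T(z)^{-1}\|^{-1}=\min\{\sigma_{\mathrm{inf}}(T(z)),\sigma_{\mathrm{inf}}(T(z)^*)\}$ and proceeds exactly as you describe.

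The divergence is in the final claim, and here you are making the problem harder than the paper does. The paper does \emph{not} attempt to compare $\sigma_{\mathrm{inf}}(T(z)\mathcal{P}_n^*)$ with $\sigma_{\mathrm{inf}}(T(z)^*\mathcal{P}_n^*)$ at fixed finite $n$, nor does it invoke approximate point spectrum, propagation from $\partial\Lambda(T)$ into $\rho(T)$, or any subharmonicity argument. Instead it works entirely with the \emph{untruncated} injection moduli: for $z\in\rho(T)$ the operator $T(z)$ is boundedly invertible and closed, so $\sigma_{\mathrm{inf}}(T(z))=\|T(z)^{-1}\|^{-1}=\|(T(z)^*)^{-1}\|^{-1}=\sigma_{\mathrm{inf}}(T(z)^*)$; continuity of both injection moduli in $z$ then pushes the equality onto $\partial\Lambda(T)$. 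That is the whole argument.

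You have correctly noticed that this equality of the full injection moduli does not, by itself, force $\sigma_{\mathrm{inf}}(T(z)\mathcal{P}_n^*)\le\sigma_{\mathrm{inf}}(T(z)^*\mathcal{P}_n^*)$ for each fixed $n$ (your $2\times 2$ intuition is right). What the paper's argument actually yields is that on $\rho(T)\cup\partial\Lambda(T)$ the sequence $\sigma_{\mathrm{inf}}(T(z)\mathcal{P}_n^*)$ alone already decreases monotonically to $\|T(z)^{-1}\|^{-1}$ and hence can replace $\gamma_n$ in all the convergence conclusions; the literal finite-$n$ identity is not what the proof establishes. So your instinct that the finite-$n$ statement is delicate is well founded, but the intended content is the limiting statement, for which the paper's one-line invertibility observation suffices and your subharmonicity machinery is unnecessary.
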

\begin{proof}
We first note that $\|T(z)^{-1}\|$ can be characterized by
\begin{equation}\setlength\abovedisplayskip{6pt}\setlength\belowdisplayskip{6pt}
\label{injection_moduli_alt}
\|T(z)^{-1}\|^{-1}=\min\left\{\sigma_{\mathrm{inf}}(T(z)),\sigma_{\mathrm{inf}}(T(z)^*)\right\},
\end{equation}
where for an unbounded operator $S$,
$$\setlength\abovedisplayskip{6pt}\setlength\belowdisplayskip{6pt}
\sigma_{\mathrm{inf}}(S)=\inf\{\|Su\|\, \vert \,u\in\mathcal{D}(S),\|u\|=1\}.
$$
The proof of~\cref{injection_moduli_alt} carries over from the case of linear pencils~\cite[Lem.~6.4]{colbrook2022foundations} without modification. Since $T(z)^{-1}$ is bounded holomorphic on $\rho(T)$, $\|T(z)^{-1}\|^{-1}$ is continuous on $\rho(T)$. We show that $\|T(z)^{-1}\|^{-1}$ is continuous on the whole of $\Omega$. Let $z_n\in\rho(T)$ with $z_n\rightarrow z\in\Lambda(T)$. For any $\epsilon>0$, there exists $u_\epsilon\in\mathcal{D}(T)$ of unit norm such that $\min\{\|T(z)u_\epsilon\|,\|T(z)^*u_\epsilon\|\}\leq\epsilon$. But $\min\{\|T(w)u_\epsilon\|,\|T(w)^*u_\epsilon\|\}$ is continuous in $w$ and hence $\limsup_{n\rightarrow\infty}\|T(z_n)^{-1}\|^{-1}\leq\epsilon$. Since $\epsilon>0$ was arbitrary and $\|T(z)^{-1}\|^{-1}$ is identically zero on $\Lambda(T)$, it follows that $\|T(z)^{-1}\|^{-1}$ is continuous.

From~\cref{injection_moduli_alt} it follows that $\gamma_n(z,T)\geq \|T(z)^{-1}\|^{-1}$. Given $z\in\Omega$ and $\epsilon>0$, let $u\in\mathcal{D}(T)$ of unit norm such that $\|T(z)u\|\leq \sigma_{\mathrm{inf}}(T(z))+\epsilon.$ Since $\mathcal{U}$ forms a core of $T(z)$, we may assume that $u\in\mathcal{U}$ and hence that $u=\mathcal{P}_nu$ for large $n$. It follows that $\limsup_{n\rightarrow\infty}\sigma_{\mathrm{inf}}(T(z)\mathcal{P}_n^*)\leq\sigma_{\mathrm{inf}}(T(z))+\epsilon$. We can argue in exactly the same manner for $T(z)^*$ and since $\epsilon>0$ was arbitrary, we have $\lim_{n\rightarrow\infty}\gamma_n(z,T)=\|T(z)^{-1}\|^{-1}$. Since the projections $\mathcal{P}_n$ are increasing, the functions $\gamma_n(z,T)$ decrease monotonically in $n$. Since $\|T(z)^{-1}\|^{-1}$ is continuous, Dini's theorem implies that the convergence of $\gamma_n(z,T)$ is uniform on compact subsets of $\Omega$.

Finally, if $z\in\rho(T)$, then $\sigma_{\mathrm{inf}}(T(z))=\sigma_{\mathrm{inf}}(T(z)^*)$. The proof of this carries over from the linear pencil case. Continuity of $\sigma_{\mathrm{inf}}(T(z))$ and $\sigma_{\mathrm{inf}}(T(z)^*)$ shows that this equality also holds for $z\in\partial\Lambda(T)$.
\end{proof}

Combining~\cref{thm:equiv_pseudospec,injection_moduli_theorem}, we find that for any integer $n$ we have
\begin{equation}\setlength\abovedisplayskip{6pt}\setlength\belowdisplayskip{6pt}
\label{pseudospectra_verified}
\left\{z\in\Omega \, \big\vert \, \gamma_n(z,T)<\epsilon \right\}\subset \Lambda_{\epsilon}(T).
\end{equation}
Moreover, since the convergence of $\gamma_n(z,T)$ to $\|T(z)^{-1}\|^{-1}$ is locally uniform, these approximations converge to $\Lambda_{\epsilon}(T)$ as $n\rightarrow\infty$ without spectral pollution or spectral invisibility.\footnote{This can be made precise in terms of the so-called Attouch--Wets topology which generalizes the Hausdorff metric to closed (including unbounded) subsets of $\mathbb{C}$~\cite{beer1993topologies}.} There are generally two ways to make this a practical computation:
\begin{itemize}[noitemsep,leftmargin=*]
	\item If we have discretizations of the finite-rank operators $T(z)\mathcal{P}_n^*$ and $T(z)^*\mathcal{P}_n^*$ that have finite lower bandwidths (or are well approximated by such matrices), we take rectangular truncations capturing the full range~\cite{colbrook2019compute}. With respect to the appropriate norms, which can differ in the domain and range space, the smallest singular values of the resulting matrices are the same as those of $T(z)\mathcal{P}_n^*$ and $T(z)^*\mathcal{P}_n^*$. Discretizations with finite lower bandwidths for the differential operators studied in this paper are provided by the ultraspherical spectral method~\cite{olver2013fast}.
	\item If we have discretizations of $\mathcal{P}_nT(z)^*T(z)\mathcal{P}_n^*$ and $\mathcal{P}_nT(z)T(z)^*\mathcal{P}_n^*$, then we can compute their smallest singular values and take square roots to compute $\gamma_n$~\cite{colbrook2022foundations}.
\end{itemize}
The first method should be preferred over the second wherever possible since it avoids the loss of precision owing to the square root. There are situations where the second method seems unavoidable~\cite{colbrook2021rigorous}. It can be shown that it is not always possible to compute $\Lambda_{\epsilon}(T)$ by discretizing with square, finite sections $\mathcal{P}_nT\mathcal{P}_n^*$ of $T$~\cite{colbrook2020foundations}. One must be careful if one wants to compute $\Lambda_{\epsilon}(T)$ by discretizing first.

\subsubsection{Example: Orr--Sommerfeld}\label{sec:OrrSommerfeld}
As an example of the inclusion~\cref{pseudospectra_verified}, we consider the classical Orr--Sommerfeld equation. When analyzing the temporal stability of fluid flows, this equation is a linear eigenvalue problem~\cite{orszag1971accurate,reddy1993pseudospectra}. However, if one considers 
spatial stability analysis, it becomes an NEP~\cite[Chapt. 7]{schmid2002stability}. 

\begin{figure}
\centering
\begin{minipage}[b]{1\textwidth}
\centering
\begin{overpic}[width=0.32\textwidth,trim={0mm 0mm 0mm 0mm},clip]{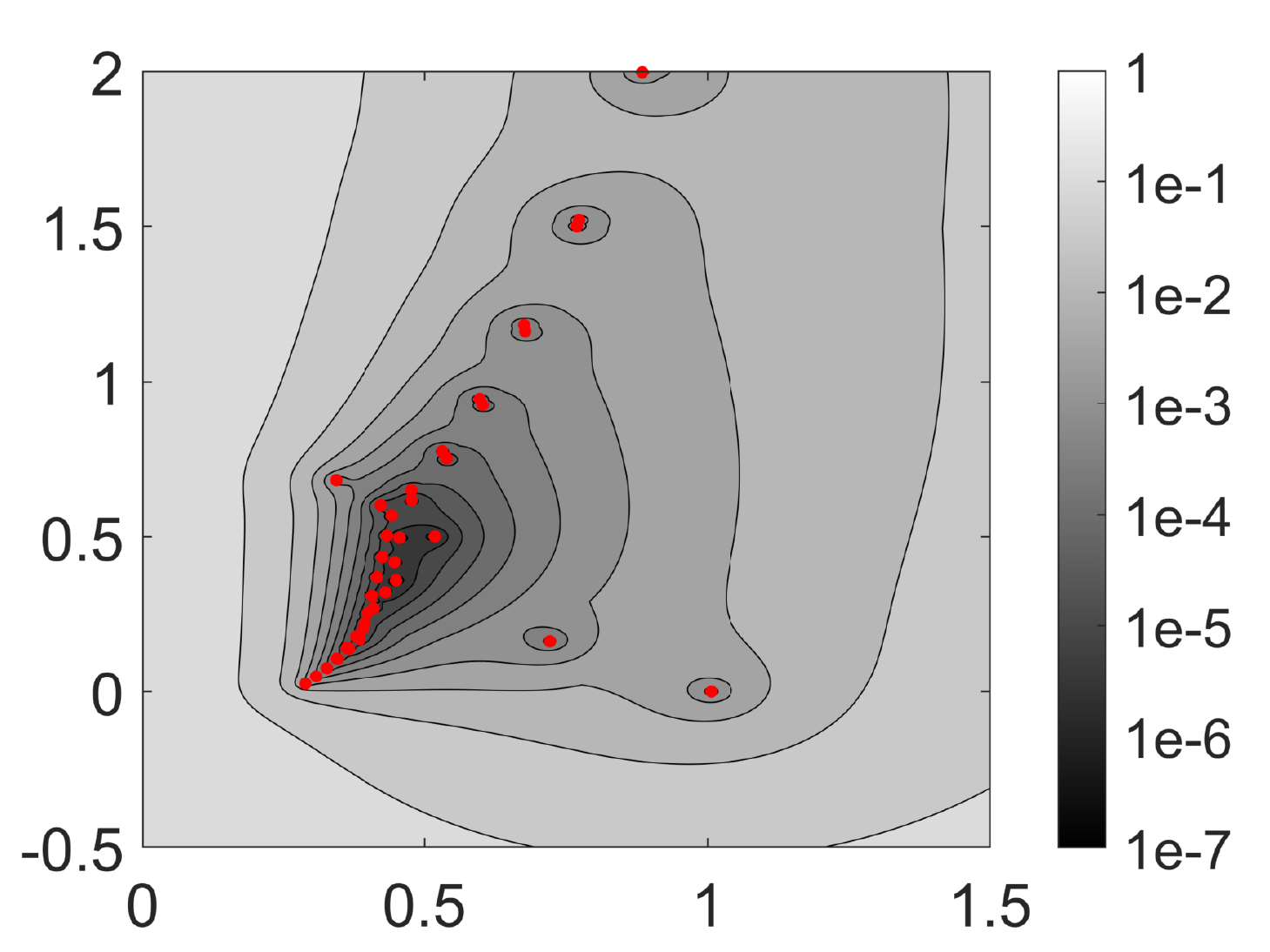}
\put (36,-5) {\small $\mathrm{Re}(\lambda)$}
\put (-4,30) {\small\rotatebox{90}{$\mathrm{Im}(\lambda)$}}
\linethickness{1.3pt}
\put (57,47){\color{blue}\vector(-1,1){10}}
\end{overpic}\hspace{1.5mm}
\begin{overpic}[width=0.32\textwidth,trim={0mm 0mm 0mm 0mm},clip]{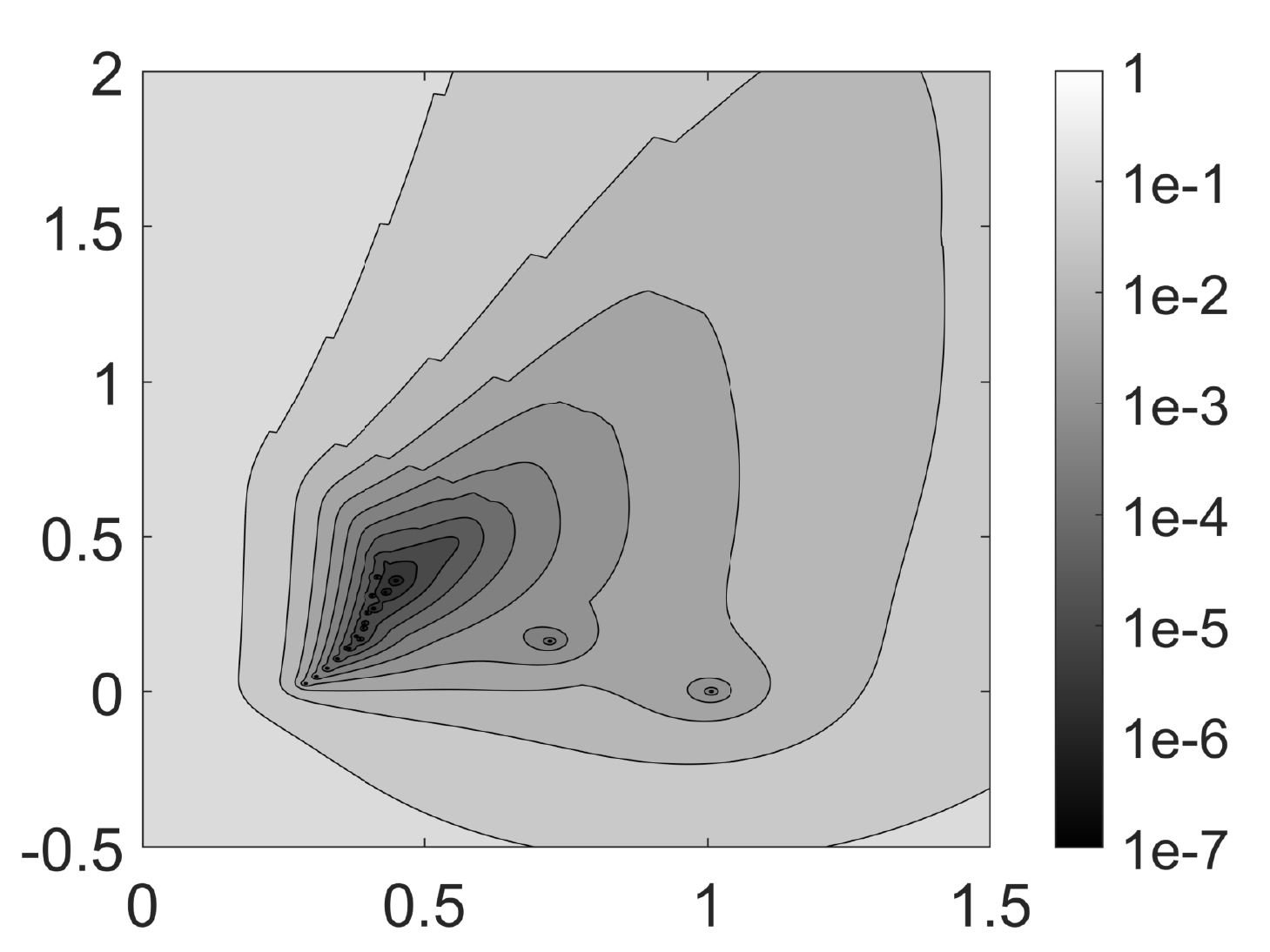}
\put (36,-5) {\small $\mathrm{Re}(\lambda)$}
\put (-4,30) {\small\rotatebox{90}{$\mathrm{Im}(\lambda)$}}
\end{overpic}\hspace{1.5mm}
\begin{overpic}[width=0.32\textwidth,trim={0mm 0mm 0mm 0mm},clip]{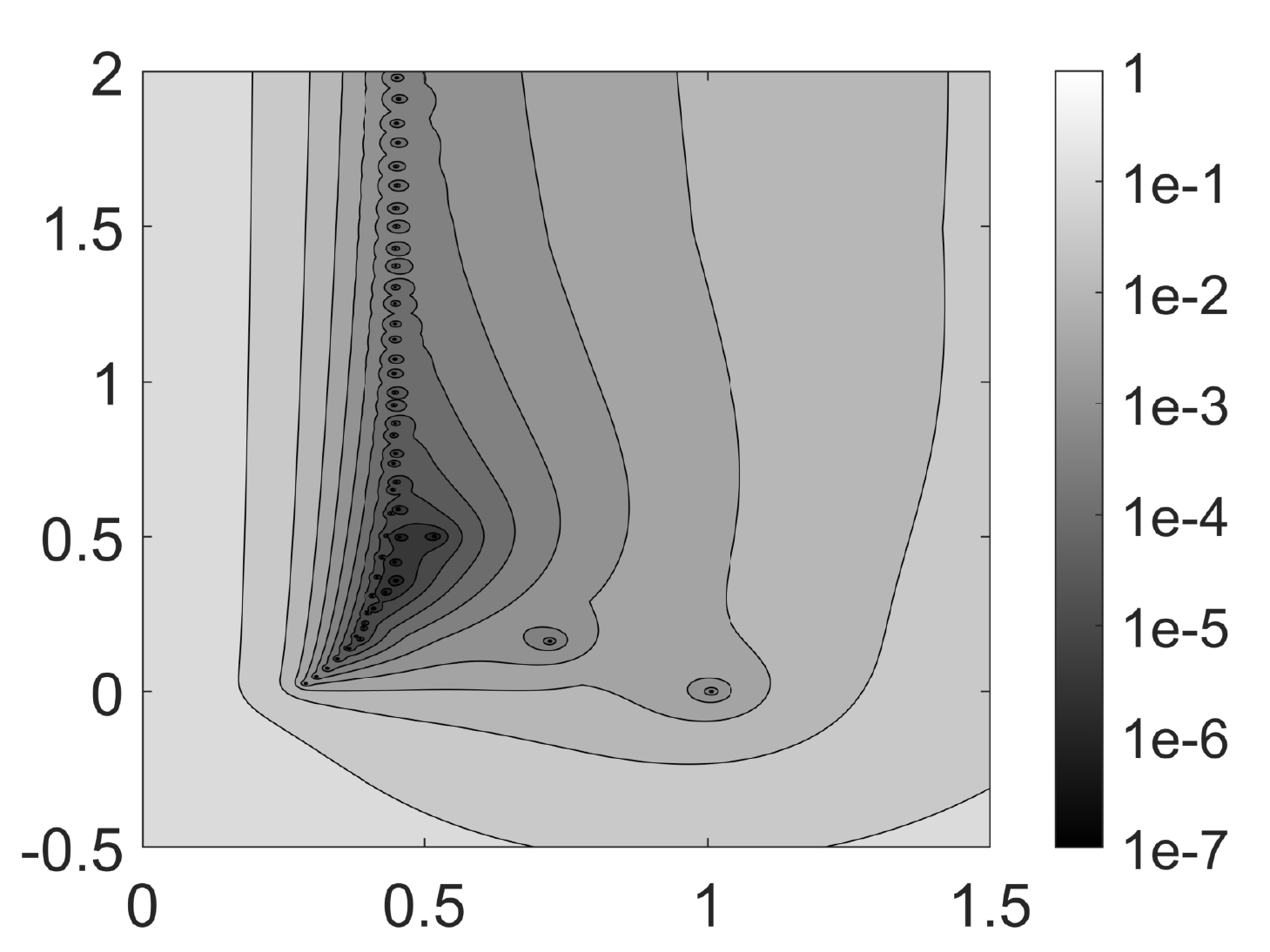}
\put (36,-5) {\small $\mathrm{Re}(\lambda)$}
\put (-4,30) {\small\rotatebox{90}{$\mathrm{Im}(\lambda)$}}
\end{overpic}
\end{minipage}\vspace{-3mm}
\caption{The computed pseudospectra for the Orr--Sommerfeld NEP. Left: The pseudospectra is computed by first discretizing with $n = 64$ and then computing the pseudospectra of the matrix NEP with an appropriate weight matrix. The eigenvalues are shown as red dots and include a spurious branch labeled by a blue arrow. Middle: The computed pseudospectra using the functions $\gamma_n$ from~\cref{def_gamma_n} for $n=64$. These pseudospectral sets are guaranteed to be inside the pseudospectral sets of the infinite-dimensional problem and converge as $n\rightarrow\infty$. Right: The computed pseudospectra using $\gamma_n$ for $n=128$.}
\label{fig:orr_somm1}
\end{figure}

We consider a background plane Poiseuille flow $U(y)=1-y^2$ between two walls at $y = \pm 1$ with Reynolds number $R>0$ and a fixed real perturbation frequency $\omega\in\mathbb{R}$. To define a NEP, we need the following two operators:
\begin{align*}\setlength\abovedisplayskip{6pt}\setlength\belowdisplayskip{6pt}
\mathcal{A}(\lambda)\phi&=\left[\frac{1}{R}\mathcal{B}(\lambda)^2 + i\left(\lambda U(y) -\omega\right)\mathcal{B}(\lambda)+i\lambda U''(y)\right]\phi,\quad
\mathcal{B}(\lambda)\phi&=-\frac{d^2\phi}{dy^2}+\lambda^2\phi.
\end{align*}
The Orr--Sommerfeld operator is formally defined by $T(\lambda)=\mathcal{B}(\lambda)^{-1}\mathcal{A}(\lambda)$. Care is needed when defining the boundary conditions, domains, and appropriate spaces. Moreover, the spectral properties of the NEP depend on a choice of norms~\cite{trefethen1999computation}. We equip $\mathcal{B}(\lambda)$ with Dirichlet boundary conditions $\psi(\pm 1)=0$ and $T(\lambda)$ with boundary conditions $\psi(\pm 1)=0$ and $\psi'(\pm 1)=0$. The appropriate Hilbert space is $\mathcal{D}(\mathcal{B}(1))$ with the energy inner-product given by~\cite{di1969completeness}
\begin{equation}\setlength\abovedisplayskip{6pt}\setlength\belowdisplayskip{6pt}
\label{orr_somm_norm}
\langle \phi,\psi\rangle_E = \int_{-1}^1 [\mathcal{B}(1)\phi]\overline{\psi}\, dy=\int_{-1}^1 \phi\overline{\psi} +\frac{d\phi}{dy}{\frac{d\overline{\psi}}{dy}}\, dy.
\end{equation}
We consider $\omega=0.264002$ and $R=5772.22$, which correspond to the critical neutral point for stability. In this case, the spectrum is discrete.\footnote{Other examples such as Blasius boundary layer flow have a continuous spectral component~\cite{grosch1978continuous}. Pseudospectra can also be computed using the functions $\gamma_n$ for such problems.} We are mainly interested in eigenvalues in the first complex quadrant that have positive phase velocities and are relevant for computing the downstream response to a forcing term at the origin. However, there is a physical relevance to eigenvalues in other quadrants~\cite[Chapt.~7]{schmid2002stability}.

This problem goes under the name of \texttt{orr\_sommerfeld} in the NLEVP collection, where it is discretized using a Chebyshev collocation method~\cite{tisseur2001structured}. We compute the pseudospectra of these discretizations with appropriate weight matrices to take into account the norm induced by~\cref{orr_somm_norm}. For this problem, the pseudospectra of the discretized operators converge to the correct pseudospectra as the discretization size increases. However, deciding which regions of the computed pseudospectra are trustworthy can be challenging. We also compute the pseudospectra using the functions $\gamma_n$ in~\cref{def_gamma_n}, using a Legendre Galerkin spectral method with basis recombination to enforce the boundary conditions. For this basis, $\mathcal{A}\mathcal{P}_n^*$ and $\mathcal{B}\mathcal{P}_n^*$ are lower banded operators. Hence, we use rectangular truncation to compute $\gamma_n(z,T)$ and apply~\cref{injection_moduli_theorem}.\footnote{In this example, the spectrum is discrete and $\partial \Lambda(T)=\Lambda(T)$. Hence~\cref{injection_moduli_theorem} tells us that $\gamma_n(z,T)=\sigma_{\mathrm{inf}}(T(z)\mathcal{P}_n^*)$, which means that $T(z)^*\mathcal{P}_n^*$ is not needed to compute pseudospectra.}

\cref{fig:orr_somm1} (left) shows pseudospectra of the discretized operators using $n=64$ (NLEVP collection default) and \cref{fig:orr_somm1} (middle) shows $\gamma_{n}$ for $n=64$. There is a region where both pseudospectra agree. However, as $\mathrm{Im}(\lambda)$ increases, so do the differences between the pseudospectral sets. Due to~\cref{pseudospectra_verified}, we can trust the output provided by $\gamma_n$ and use it to discern which spectral regions of the Chebyshev collocation method have converged.~\cref{fig:orr_somm1} (right) shows pseudospectra computed using $\gamma_{n}$ for $n=128$. This confirms our suspicions that there is a branch of spurious eigenvalues in the discretized NEP when $n = 64$. In~\cref{sec:butterfly}, we will also see a striking example where the pseudospectra of the discretized operators do not converge.

\section{Stability analysis of infinite-dimensional Beyn's method}\label{sec:stab_analysis}
The goal of this section is to obtain pseudospectral set inclusions for InfBeyn (see~\cref{alg:Beyn's method}) by using Keldysh's theorem (see~\cref{thm:Keldysh}). InfBeyn computes the eigenvalues of the NEP inside a closed contour $\Gamma$ via a computed pencil. The pencil is given by 
\begin{equation} 
\tilde{F}(z) = \tilde{\mathcal{U}}^*\tilde{A}_1\tilde{V}_0 - z \tilde{\mathcal{U}}^*\tilde{A}_0\tilde{V}_0, 
\label{eq:approxF} 
\end{equation} 
where $\tilde{A}_0 = \tilde{\mathcal{U}}\tilde{\Sigma}_0\tilde{V}_0^*$ is the SVD of $\tilde{A}_0$. Here, $\tilde{A}_0$ and $\tilde{A}_1$ are the approximations of $A_0$ and $A_1$, respectively, computed by InfBeyn via a quadrature rule and truncated singular value decomposition (see~\cref{eqn:approx_Beyn}). To understand the pseudospectral set inclusions of InfBeyn, we first relate $\tilde{F}$ to the following pencil: 
\begin{equation} 
F(z) = A_1V_0 - zA_0V_0,
\label{eq:F} 
\end{equation} 
which is about understanding the errors incurred by quadrature rules (see~\cref{sec:FirstPart}). In~\cref{eq:F}, $V_0$ denotes the right singular vector matrix of $A_0$. Note also that the range of $F$ lies in $\mathcal{H}$, whereas its domain is $\mathbb{C}^m$.

Second, we relate $F$  to $T$, which is about controlling the error of InfBeyn when performed with exact integration (see~\cref{sec:SecondPart}). Similar pseudospectral set inclusions are known for the FEAST method for linear eigenvalue problems~\cite{horning2020feast}; however, the analysis is more challenging for NEPs.  

\subsection{Setup}\label{sec:setup}
Suppose that $\Gamma$ is a contour that does not intersect $\Lambda(T)$ and bounds a simply-connected region $\mathrm{int}(\Gamma)$ containing eigenvalues $\lambda_1, \ldots, \lambda_m$ (counted including algebraic multiplicities). If $f$ is a holomorphic function on a neighborhood of $\mathrm{int}(\Gamma)$, then by the Cauchy integral formula, we have
$$
\frac{1}{2\pi i}\int_{\Gamma} f(z)T(z)^{-1}\, dz=Vf(J)W^*,
$$
where $J$ is a block Jordan matrix and $V$ are $W$ are the generalized right and left eigenvectors of $T$ in~\cref{thm:Keldysh}. We assume that the quadrature rule used by InfBeyn is accurate in the sense that our approximation, denoted by $\tilde{A}_j$, to $A_j$ satisfies
\begin{equation} 
\|A_j - \tilde{A}_j\| \leq \epsilon, \qquad j=1,2,
\label{eq:QuadError}
\end{equation} 
where $\epsilon>0$.  Recall that $A_0$, $A_1$, $\tilde{A}_0$ and $\tilde{A}_1$ are of rank $m$ (see~\cref{sec:BeynsMethod}). Throughout the analysis, we also assume that $VW^*\mathcal{V}V_0$ is of rank $m$.

\subsection{Controlling the errors incurred by quadrature rules}\label{sec:FirstPart}
We begin by controlling how the errors in InfBeyn's quadrature rules perturb the spectral properties of its linear pencils, i.e., controlling the difference between $\tilde{F}$  (see~\cref{eq:approxF}) and $F$ (see~\cref{eq:F}). These pencils map to different spaces, so we bound the difference between $\sigma_{\mathrm{inf}}(F)$ and $\sigma_{\mathrm{inf}}(\tilde{F})$, which directly bounds the differences in pseudospectra. Since $\tilde{\mathcal{U}}$ and $\tilde{V}_0$ have orthonormal columns and $\tilde{A}_1$ and $\tilde{A}_0$ are of rank $m$,
$$
\sigma_{\inf}(\tilde{F}(z)) = \sigma_{m}(\tilde{F}(z))=\sigma_{m}\left(\tilde{\mathcal{U}}(\tilde{\mathcal{U}}^*\tilde{A}_1\tilde{V_0}-z\tilde{\mathcal{U}}^*\tilde{A}_0\tilde{V_0})\tilde{V}_0^*\right) = \sigma_{m}(\tilde{\mathcal{U}}\tilde{\mathcal{U}}^*\tilde{A}_1\tilde{V_0}\tilde{V}_0^*-z\tilde{A}_0),
$$
where the last equality follows as $\tilde{\mathcal{U}}\tilde{\mathcal{U}}^*$ and $\tilde{V_0}\tilde{V_0}^*$ act as the identity on the column and row space of $\tilde{A}_0$, respectively. By a similar argument, we also have $\sigma_{\inf}(F(z))=\sigma_m(\mathcal{U}\mathcal{U}^*A_1V_0V_0^*-zA_0)$. It follows that
$$
\left|\sigma_{\inf}(\tilde{F}(z))-\sigma_{\inf}(F(z))\right|\leq \|P_1A_1P_2-\tilde{P}_1\tilde{A}_1\tilde{P}_2\|+|z|\|A_0-\tilde{A}_0\|, 
$$
where $P_1 = \mathcal{U}\mathcal{U}^*$, $\tilde{P}_1 = \tilde{\mathcal{U}}\tilde{\mathcal{U}}^*$, $P_2=V_0V_0^*$, and $\tilde{P}_2=\tilde{V}_0\tilde{V}_0^*$.  By the triangle inequality, we find that 
\[
\|P_1A_1P_2-\tilde{P}_1\tilde{A}_1\tilde{P}_2\|
\leq \|A_1\|(\|P_1-\tilde{P}_1\|+\|P_2-\tilde{P}_2\|)+\|A_1-\tilde{A}_1\|.
\]
Since $A_0$ and $\tilde{A}_0$ have the same rank, we know that~\cite{chen2016perturbation}:
$$
\|P_1-\tilde{P}_1\|+\|P_2-\tilde{P}_2\|\leq 2\min\{\|A_0^\dagger\|,\|\tilde{A}_0^\dagger\|\}\|A_0-\tilde{A}_0\|\leq 2\|A_0^\dagger\|\|A_0-\tilde{A}_0\|,
$$
where $A_0^\dagger$ denotes the pseudoinverse of $A_0$. From~\cref{eq:QuadError}, we conclude that
\begin{equation} 
\left|\sigma_{\inf}(\tilde{F}(z))-\sigma_{\inf}(F(z))\right|\leq (2\|A_0^\dagger\|\|A_1\|+|z|+1)\epsilon.
\label{eq:FinalQuadError} 
\end{equation}
One can interpret~\cref{eq:FinalQuadError} as telling us that the pseudospectral sets of $F(z)$ and $\tilde{F}(z)$ are close. Precisely how close is determined by the errors incurred when computing $A_0$ and $A_1$ with a quadrature rule, i.e., $\|A_0-\tilde{A}_0\|$ and $\|A_1-\tilde{A}_1\|$, as well as the quantity $\|A_0^\dagger\|\|A_1\|$ that is related to the intrinsic spectral properties of $T$ (see~\cref{eq:Regularity}). 

\subsection{Stability analysis with exact integration}\label{sec:SecondPart}
Since~\cref{eq:FinalQuadError} tells us that the pseudospectral sets of $\tilde{F}$ and $F$ are relatively close, we now turn to relating the pseudospectral sets of $F$ to $T$. In the end, we aim to prove pseudospectral set inclusions between $\tilde{F}$ (the pencil used to compute eigenvalues in InfBeyn) and $T$ (the original NEP) inside $\Gamma$.  We have the following pseudospectral set inclusion: 
\begin{theorem} 
Given the same assumptions on $T$ as in~\cref{thm:Keldysh} and the setup in~\cref{sec:setup}. For sufficiently small $\delta>0$, we have the following pseudospectral set inclusions inside $\Gamma$: 
\begin{equation}
\Lambda_{\delta_1}(T)\cap \mathrm{int}(\Gamma) \subset \Lambda_{\delta}(F) \cap \mathrm{int}(\Gamma)\subset\Lambda_{\delta_2}(T) \cap \mathrm{int}(\Gamma),
\label{eq:PseudospectralInclusions} 
\end{equation} 
where $V$ and $W$ are the generalized eigenvectors in~\cref{thm:Keldysh}, $\mathcal{V}$ is the random quasimatrix in~\cref{sec:BeynsMethod}, 
\[
\delta_1 = \frac{\delta}{\|VW^*\|\|VW^*\mathcal{V}\| + M\delta},\qquad \delta_2 =  \frac{\delta}{\sigma_m(VW^*)\sigma_m(VW^*\mathcal{V})- M\delta},
\]
and $M=\sup_{z\in\mathrm{int}(\Gamma)}\|R(z)\|$. 
\label{thm:ExactIntegration} 
\end{theorem}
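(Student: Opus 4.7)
The plan is to use Keldysh's theorem to rewrite $F(z)$ in a form involving the generalized eigenvectors $V$, $W$ and the Jordan matrix $J$, and then work directly with the characterization of pseudospectra via smallest singular values from~\cref{thm:equiv_pseudospec}. Each set inclusion is established by constructing an appropriate test vector for one pseudospectral condition from a test vector certifying the other.

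\textbf{Setting up via Keldysh.} Substituting $T(z)^{-1} = V(zI-J)^{-1}W^* + R(z)$ into the definitions of $A_0$ and $A_1$ in~\cref{eqn:A0}, the holomorphic remainder $R(z)$ (and $zR(z)$) contribute zero to the contour integrals, and the Dunford--Cauchy calculus applied to $J$ gives $A_0 = VW^*\mathcal{V}$ and $A_1 = VJW^*\mathcal{V}$. Writing $B \coloneqq W^*\mathcal{V}V_0$, which is an $m\times m$ invertible matrix by the rank assumption, one obtains $F(z) = -V(zI-J)B$. From the economized SVD $A_0 = \mathcal{U}\Sigma_0 V_0^*$ it also follows that $VB = A_0 V_0 = \mathcal{U}\Sigma_0$, so $\|VB\| = \|VW^*\mathcal{V}\|$ and $\sigma_m(VB) = \sigma_m(VW^*\mathcal{V})$. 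These identities are what let the final estimates be packaged in terms of $\|VW^*\mathcal{V}\|$ and $\sigma_m(VW^*\mathcal{V})$.

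\textbf{The inclusion $\Lambda_{\delta_1}(T)\cap\mathrm{int}(\Gamma)\subset \Lambda_\delta(F)\cap\mathrm{int}(\Gamma)$.} Given $z$ with $\|T(z)^{-1}\|^{-1} < \delta_1$, pick a unit vector $u \in \mathcal{H}$ with $\|T(z)^{-1}u\| > 1/\delta_1$. Setting $s = (zI-J)^{-1}W^*u$, Keldysh gives $\|Vs\| \geq \|T(z)^{-1}u\| - M > 1/\delta_1 - M$. Choose $x = B^{-1}s/\|B^{-1}s\| \in \mathbb{C}^m$: a direct computation yields $F(z)x = -VW^*u/\|B^{-1}s\|$, so $\|F(z)x\| \leq \|VW^*\|/\|B^{-1}s\|$. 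The key step is the lower bound $\|B^{-1}s\| \geq \|Vs\|/\|VB\| = \|Vs\|/\|VW^*\mathcal{V}\|$, obtained from the factorization $Vs = VB(B^{-1}s)$. Rearranging produces $\|F(z)x\| < \|VW^*\|\|VW^*\mathcal{V}\|\delta_1/(1-M\delta_1)$, which is at most $\delta$ exactly when $\delta_1 \leq \delta/(\|VW^*\|\|VW^*\mathcal{V}\| + M\delta)$.

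\textbf{The inclusion $\Lambda_\delta(F)\cap\mathrm{int}(\Gamma)\subset \Lambda_{\delta_2}(T)\cap\mathrm{int}(\Gamma)$.} Given a unit $x \in \mathbb{C}^m$ with $\|V(zI-J)Bx\| < \delta$, let $t = Bx$ and $r = (zI-J)t$, so $\|Vr\| < \delta$. Since $W$ has full column rank $m$, the equation $W^*u = r$ admits a minimum-norm solution $u$, which lies in $(\ker W^*)^\perp = (\ker VW^*)^\perp$ (the latter equality using that $V$ is injective). Because $VW^*$ has rank $m$ with smallest nonzero singular value $\sigma_m(VW^*)$, the bound $\|u\| \leq \|VW^*u\|/\sigma_m(VW^*) = \|Vr\|/\sigma_m(VW^*) < \delta/\sigma_m(VW^*)$ holds. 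Keldysh then gives $\|T(z)^{-1}u\| \geq \|Vt\| - M\|u\| \geq \sigma_m(VW^*\mathcal{V}) - M\|u\|$, where the final inequality uses $\|Vt\| = \|VBx\| \geq \sigma_m(VB) = \sigma_m(VW^*\mathcal{V})$. Dividing by $\|u\|$ and inverting yields $\|T(z)^{-1}\|^{-1} < \delta/(\sigma_m(VW^*)\sigma_m(VW^*\mathcal{V}) - M\delta) = \delta_2$.

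\textbf{Main obstacle.} The subtle point is that the stated constants involve the paired quantities $\|VW^*\|$, $\sigma_m(VW^*)$, $\|VW^*\mathcal{V}\|$, $\sigma_m(VW^*\mathcal{V})$ rather than the separate (and generically looser) norms $\|V\|$, $\|W\|$, $\sigma_m(V)$, $\sigma_m(W)$. Getting this requires two non-obvious choices: in the forward direction, feeding $B$ into the factorization $Vs = VB(B^{-1}s)$ so that $\|VB\| = \|VW^*\mathcal{V}\|$ appears in place of $\|V\|\|B\|$; in the reverse direction, selecting $u$ as a minimum-norm solution in $(\ker VW^*)^\perp$ so that $\sigma_m(VW^*)$ (rather than $\sigma_m(V)\sigma_m(W)$) bounds $\|u\|$. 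The ``sufficiently small $\delta$'' hypothesis is precisely the requirement that $1 - M\delta_1 > 0$ and $\sigma_m(VW^*)\sigma_m(VW^*\mathcal{V}) - M\delta > 0$, so the formulas for $\delta_1$ and $\delta_2$ remain positive and meaningful.
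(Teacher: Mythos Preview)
Your proposal is correct and follows essentially the same route as the paper. The paper phrases the construction of test vectors via the pseudoinverses $L_1=(VW^*)^\dagger$ and $L_2=(VW^*\mathcal{V}V_0)^\dagger$ together with the operator identities $T(z)^{-1}L_1F(z)=-VW^*\mathcal{V}V_0+R(z)L_1F(z)$ and $F(z)L_2[T(z)^{-1}-R(z)]=-VW^*$, whereas you unpack the same constructions by hand: your minimum-norm solution $u$ of $W^*u=r$ is exactly $L_1F(z)x$, and your $B^{-1}s$ is exactly $L_2(T(z)^{-1}-R(z))u$. The only small omission is that you do not explicitly dispose of the trivial case $z\in\Lambda(T)$ (where the inclusion is immediate), which the paper does handle.
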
 
\begin{proof}
We first prove the right side of the inclusion. Let $z\in\mathrm{int}(\Gamma)$ such that $\sigma_{\inf}(F(z))<\delta$ and define $L_1=(VW^*)^\dagger$. If $z\in\Lambda(T)$, then~\cref{eq:PseudospectralInclusions} immediately holds, and there is nothing to prove. If $T(z)^{-1}$ exists, then we have
\begin{equation}
\label{a}
T(z)^{-1}L_1F(z)=-VW^*\mathcal{V}V_0+R(z)L_1F(z)
\end{equation}
where we used the fact that $F(z)=V(J-zI)W^*\mathcal{V}V_0$ from~\cref{thm:Keldysh}.  Since $\sigma_{\inf}(F(z))<\delta$, there exists a unit-norm $x\in\mathbb{C}^m$ with $\|F(z)x\|<\delta$. Furthermore, $u=L_1F(z)x\neq 0$; otherwise, $VW^*\mathcal{V}V_0x=0$ and $VW^*\mathcal{V}V_0$ would not be of rank $m$. We also have that
$$
\|u\|\leq \|L_1\| \|F(z)x\|<\delta/\sigma_m(VW^*).
$$
This means that
$$
\left\|T(z)^{-1}\frac{u}{\|u\|}\right\|\geq \frac{\|VW^*\mathcal{V}V_0 x\|}{\|u\|}-M>
\frac{\sigma_m(VW^*)\sigma_m(VW^*\mathcal{V})}{\delta}-M,
$$
where $M=\sup_{z\in\mathrm{int}(\Gamma)}\|R(z)\|$. It follows that if $\delta$ is sufficiently small so that $M\delta<\sigma_m(VW^*)\sigma_m(VW^*\mathcal{V})$, then $z\in \Lambda_{\delta_2}(T)$. Hence, $\Lambda_{\delta}(F)\cap\mathrm{int}(\Gamma) \subset \Lambda_{\delta_2}(T)\cap\mathrm{int}(\Gamma)$.

For the left side of the inclusion, let $z\in\mathrm{int}(\Gamma)$ with $\|T(z)^{-1}\|>1/\delta_1$, where $\delta_1 = \delta/(\|VW^*\|\|VW^*\mathcal{V}\| + M\delta)$, and define $L_2=(VW^*\mathcal{V}V_0)^\dagger$. If $\sigma_{\inf}(F(z))=0$, then the inclusion is immediate. Hence we assume that $\sigma_{\inf}(F(z))>0$ which implies that $T(z)^{-1}$ exists and 
$$
F(z)L_2[T(z)^{-1}-R(z)]=V(J-zI)W^*\mathcal{V}V_0(W^*\mathcal{V}V_0)^\dagger V^\dagger V(z-J)^{-1}W^*=-VW^*. 
$$
There exists an element $u$ of unit norm such that $\|T(z)^{-1}u\|>1/\delta_1$. Provided that $\delta_1$ is sufficiently small so that $\delta_1<1/M$, we know that 
$$
\|T(z)^{-1}u-R(z)u\|> 1/\delta_1-M>0. 
$$
Since $\left(T(z)^{-1}-R(z)\right)u=V(z-J)^{-1}W^*u$ lies in the range of $V$, it follows that $x=L_2\left(T(z)^{-1}-R(z)\right)u$ satisfies the following inequality: 
$$
\|x\|\geq \sigma_m(L_2)\|\left(T(z)^{-1}-R(z)\right)u\|>\frac{1/\delta_1-M}{\|VW^*\mathcal{V}V_0\|}=\frac{1/\delta_1-M}{\|VW^*\mathcal{V}\|}.
$$
This means we have 
$$
\left\|F(z)\frac{x}{\|x\|}\right\|=\frac{\|VW^*u\|}{\|x\|}<\frac{\|VW^*\mathcal{V}\|}{1/\delta_1-M}\|VW^*u\|\leq\frac{\|VW^*\mathcal{V}\|\|VW^*\|}{1/\delta_1-M} = \delta. 
$$
Hence, we conclude that $\Lambda_{\delta_1}(T)\cap\mathrm{int}(\Gamma)\subset \Lambda_{\delta}(F)\cap\mathrm{int}(\Gamma)$, finishing the proof. 
\end{proof} 
\Cref{thm:ExactIntegration} tells us that when InfBeyn is performed with exact integration, the constructed pencil is very reasonable for computing the eigenvalues of $T$ provided that $VW^*$ and $VW^*\mathcal{V}$ are well-conditioned, and $M$ is not too large.   

\subsection{Pseudospectral set inclusions and interpretation} 
We can now combine~\cref{eq:FinalQuadError} and~\cref{thm:ExactIntegration} to conclude the following pseudospectral set inclusions: 
 \begin{equation}
\Lambda_{\delta_{-}}(T)\cap \mathrm{int}(\Gamma) \subset \Lambda_{\delta}(\tilde{F}) \cap \mathrm{int}(\Gamma)\subset\Lambda_{\delta_+}(T) \cap \mathrm{int}(\Gamma),
\label{eq:FinalPseudospectralInclusions} 
\end{equation}
where $\gamma_\pm = \delta \pm (2\|A_0^\dagger\|\|A_1\|+1+\sup_{z\in\mathrm{int}(\Gamma)} |z|)\epsilon$ and 
\[
\delta_- = \frac{\gamma_{-}}{\|VW^*\|\|VW^*\mathcal{V}\| + M\gamma_{-}}, \quad \delta_+ = \frac{\gamma_{+}}{\sigma_{m}(VW^*)\sigma_m(VW^*\mathcal{V}) - M\gamma_{+}}.
\]
Since InfBeyn uses the pencil $\tilde{F}$ to compute the eigenvalues of $T$ inside $\Omega$,~\cref{eq:FinalPseudospectralInclusions} tells us that InfBeyn robustly computes the eigenvalues of $T$ provided that the following conditions hold: (1) $VW^*$ is well-conditioned, (2) $VW^*\mathcal{V}$ is well-conditioned,\footnote{It can be shown that for $p =5$ we have $\sigma_m(VW^*\mathcal{V})\leq 50\sigma_m(VW^*){\rm Trace}((W^*KW)^{-1})$ with probability $>99.999$\% (see~\cite[Lem.~3]{boulle2022learning} with $p = 5$ and $t = 10$). Here, $K$ is the covariance kernel in $\mathcal{GP}(0,K)$ used to randomly generate the columns of $\mathcal{V}$. Moreover, for $p = 5$, $\|VW^*\mathcal{V}\|\leq 9(m+5)\|VW^*\|{\rm Trace}(K)$ with probability $>99.999$\% (see~\cite[Lem.~4]{boulle2022learning} with $p=5$ and $s = 3$), where ${\rm Trace}(K)$ is the sum of the eigenvalues of $K$. In practice, $VW^*\mathcal{V}$ is well-conditioned when $p \geq 5$, $VW^*$ is well-conditioned, and the covariance kernel in $\mathcal{GP}(0,K)$ is reasonably selected.} (3) the holomorphic remainder is not too large inside $\mathrm{int}(\Gamma)$, i.e., $M$ is not too big, (4) $\|A_0^\dagger\|\|A_1\|$ is relatively small, and (5) $\epsilon$ is small. 

Condition (1) is directly determined by the NEP itself and is a kind of measure of the non-normality of $T$ inside $\Omega$. Once condition (1) holds, condition (2) follows in practice, provided that the sketching performed by InfBeyn is adequate at capturing the range of $\mathcal{A}_0$ and $\mathcal{A}_1$. Condition (3) measures the regularity of $T$ inside $\Omega$, while (4) is about the regularity of the pencil $F$ if no quadrature error is incurred. By Keldysh's expansion, we see that 
\begin{equation} 
\|A_0^\dagger\|\|A_1\|\leq\sigma_m(VW^*\mathcal{V})\|VJW^*\mathcal{V}\|\leq \sigma_m(VW^*\mathcal{V})\|VJW^*\|\|\mathcal{V}\|.
\label{eq:Regularity} 
\end{equation} 
Again, one expects that (4) holds, provided that the sketching performed by InfBeyn is adequate. Finally, (5) suggests that InfBeyn's quadrature rules should be relatively accurate.  In short,~\cref{eq:FinalPseudospectralInclusions} tells us that InfBeyn should be regarded as a robust method for computing the eigenvalues of a NEP inside a compact region of the complex plane.

\section{Six NEPs with unsettling discretization issues}\label{sec:Examples} 
It turns out that 25 of the 52 matrix NEPs from the NLEVP collection are derived by discretizing an infinite-dimensional NEP. To showcase the unsettling discretization effects, we take six examples from the NLEVP collection and show how discretization has modified, destabilized, and destroyed spectra. To ensure we report problems caused by discretization alone, we have verified the computed eigenvalues of the discretized NEPs with extended precision.

\subsection{Example 1: One-dimensional acoustic wave}\label{sec:AcousticWave1d}
This is a differential boundary eigenvalue problem posed on $L^2([0,1])$ that takes the form 
\begin{equation}\setlength\abovedisplayskip{6pt}\setlength\belowdisplayskip{6pt}
\frac{d^2p}{dx^2} + 4\pi^2\lambda^2p = 0, \qquad p(0) = 0, \quad \chi p'(1) + {2\pi i \lambda}p(1) =0. 
\label{eq:AcousticWave1D} 
\end{equation} 
Here, $p$ is the acoustic pressure, $\lambda$ is the frequency, and $\chi$ is the (possibly complex) impedance~\cite{chaitin2006analysis}. The eigenvalues correspond to the resonant frequencies of the system and can be calculated explicitly (for values of $\chi$ for which $\tan^{-1}(i\chi)$ is finite) as:
\begin{equation}\setlength\abovedisplayskip{6pt}\setlength\belowdisplayskip{6pt}
\lambda_k = {\tan^{-1}(i\chi)}/({2\pi})+{k}/{2}, \qquad k\in\mathbb{Z}.
\label{eq:lambda_formula}
\end{equation} 
This problem goes under the name of \texttt{acoustic\_wave\_1d} and is the first problem listed in the NLEVP collection. It is commonly discretized using finite element method (FEMs)~\cite{harari1996recent} to form a quadratic matrix NEP.

\begin{figure}
\centering
\begin{minipage}[b]{1\textwidth}
\centering
\begin{overpic}[width=0.49\textwidth,trim={0mm 0mm 0mm 0mm},clip]{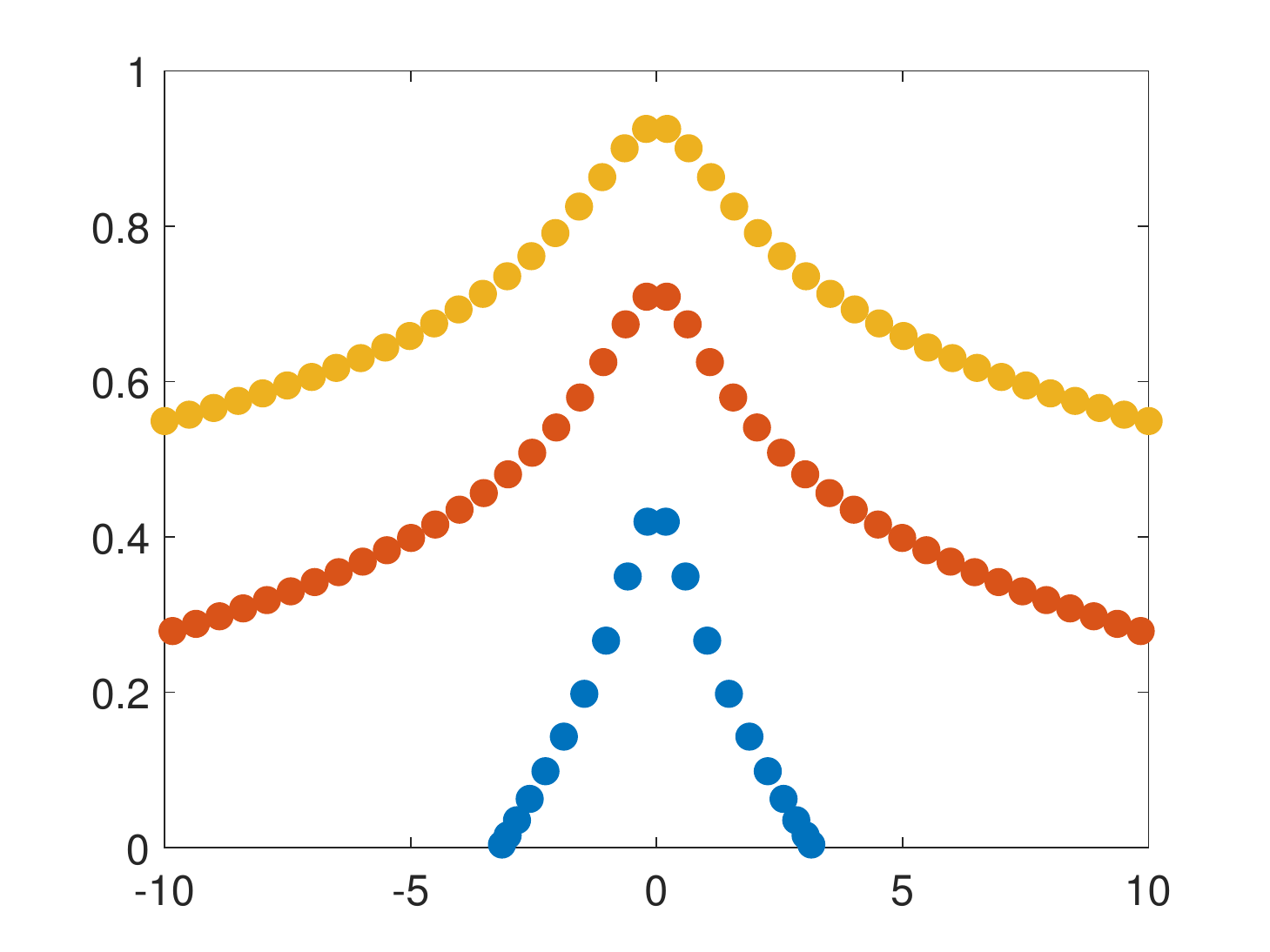}
\put (45,-2) {\small $\mathrm{Re}(\lambda)$}
\put (0,35) {\small\rotatebox{90}{$\mathrm{Im}(\lambda)$}}
\put (44,72) {\small $\chi=1$}
\put (43.5,10) {\small $n=10$}
\put (15,28) {\small \rotatebox{23}{$n=100$}}
\put (15,45) {\small \rotatebox{20}{$n=500$}}
\end{overpic}
\begin{overpic}[width=0.49\textwidth,trim={0mm 0mm 0mm 0mm},clip]{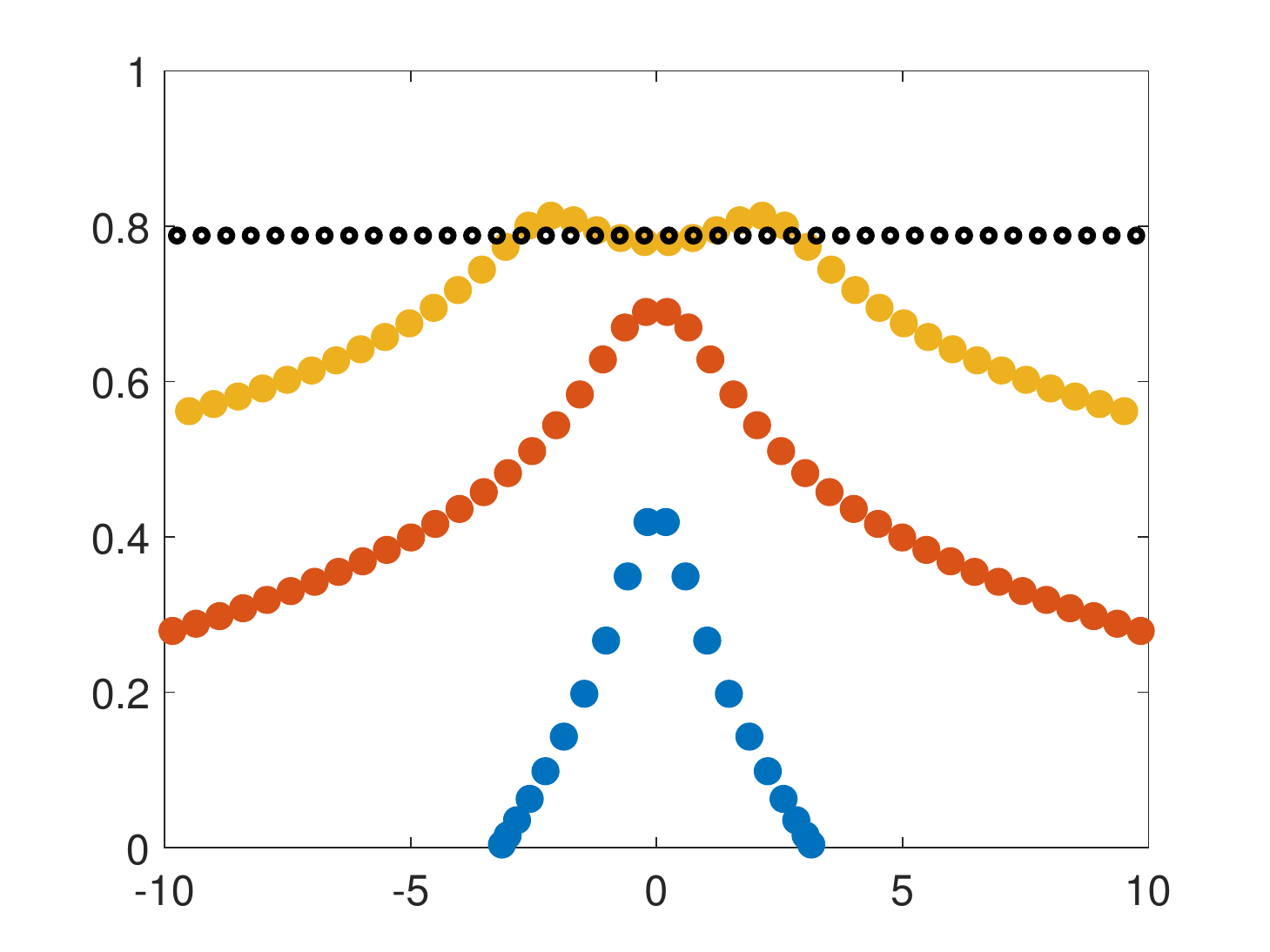}
\put (45,-2) {\small $\mathrm{Re}(\lambda)$}
\put (0,35) {\small\rotatebox{90}{$\mathrm{Im}(\lambda)$}}
\put (39,72) {\small $\chi=1.0001$}
\put (43.5,10) {\small $n=10$}
\put (15,28) {\small \rotatebox{23}{$n=100$}}
\put (15,45) {\small \rotatebox{20}{$n=500$}}
\end{overpic}
\end{minipage}\vspace{-3mm}
\caption{Computed eigenvalues for the discretized acoustic wave 1D example labeled as \texttt{acoustic\_wave\_1d} in the NLEVP collection with discretization size $n = 10$ (blue), $n=100$ (red), and $n=500$ (yellow). Left: The spectrum of the infinite-dimensional problem is empty, and all the computed eigenvalues are spurious. Right: The computed eigenvalues are correctly converging as $n\rightarrow \infty$ to the spectra of the infinite-dimensional problem (black dots), but the convergence is slow.}
\label{fig:Acoustic_Wave_1D}
\end{figure}

We first consider the default value $\chi=1$, which is also a value of $\chi$ that makes $\tan^{-1}(i\chi)$ in~\cref{eq:lambda_formula} infinite.\footnote{More precisely, we consider the operator $T(\lambda):H^2(0,1)\rightarrow L^2(0,1)\times \mathbb{C}^2$ given by $T(\lambda)p=(-p''-({2\pi \lambda}/{c})^2p,p(0),\chi p'(1) + {2\pi i \lambda}p(1))$. The operator $\lambda^2T(1/\lambda)$ has essential spectra at $\lambda=0$. Hence, this problem has essential spectra (not eigenvalues) at infinity for any $\chi$.} We compute the eigenvalues of the discretized problem for three discretization sizes $n=10$, $100$, and $500$ (see~\cref{fig:Acoustic_Wave_1D} (left)). We compute these eigenvalues using the \texttt{polyeig} command in MATLAB. One can easily show that the spectrum of~\cref{eq:AcousticWave1D} is empty for $\chi=1$. This means that all these computed eigenvalues are spurious and can be regarded as meaningless for the original problem in~\cref{eq:AcousticWave1D}.~\cref{fig:Acoustic_Wave_1D_B} (left) shows the minimum absolute value of the eigenvalues as a function of $n$. The eigenvalues march off to infinity, but incredibly slowly.

\begin{figure}
\centering
\begin{minipage}[b]{1\textwidth}
\centering
\begin{overpic}[width=0.49\textwidth,trim={0mm 0mm 0mm 0mm},clip]{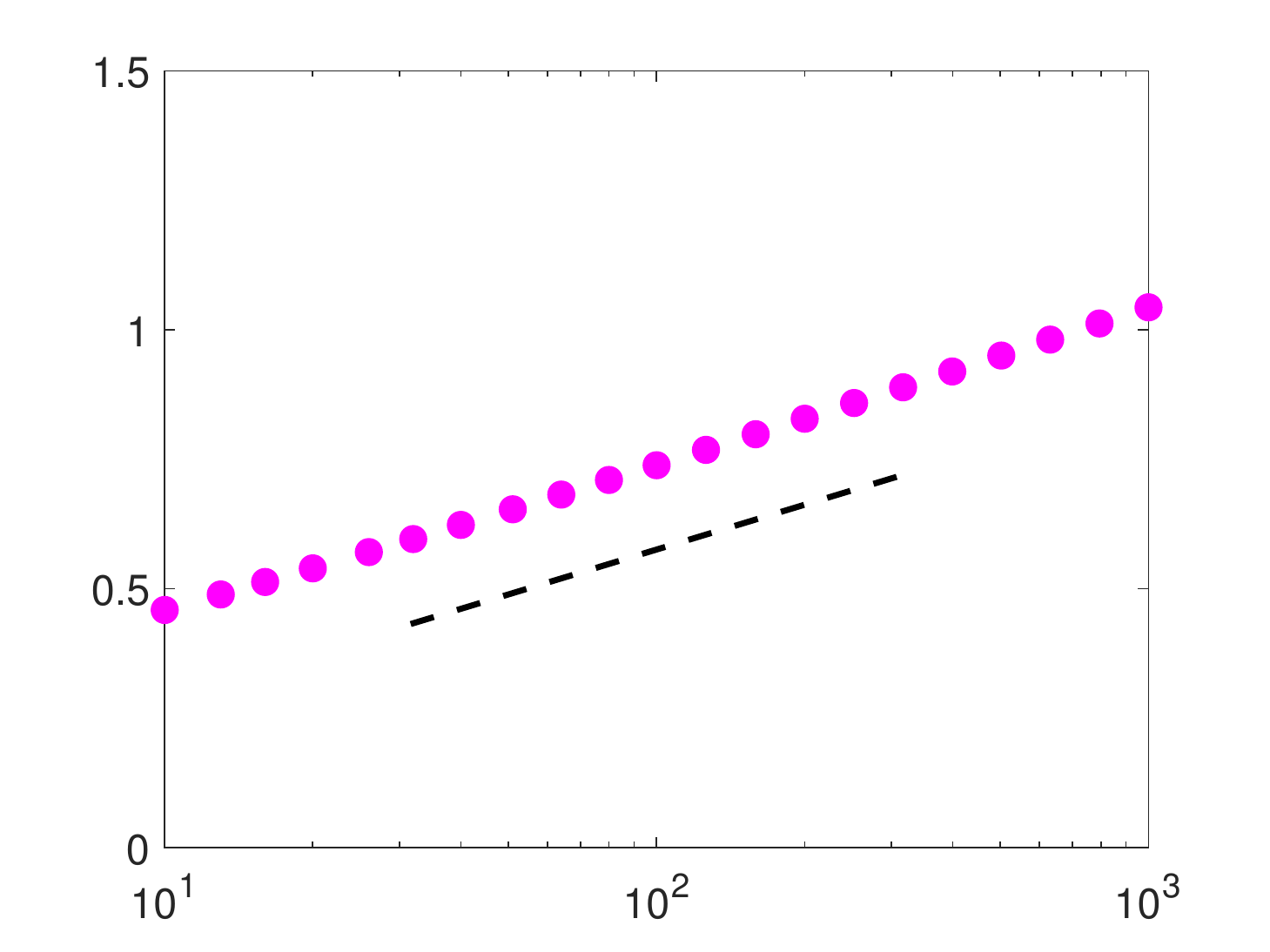}
\put (49,-2) {\small $n$}
\put (42,24) {\small \rotatebox{16}{$\mathcal{O}(\log(n))$}}
\end{overpic}
\begin{overpic}[width=0.49\textwidth,trim={0mm 0mm 0mm 0mm},clip]{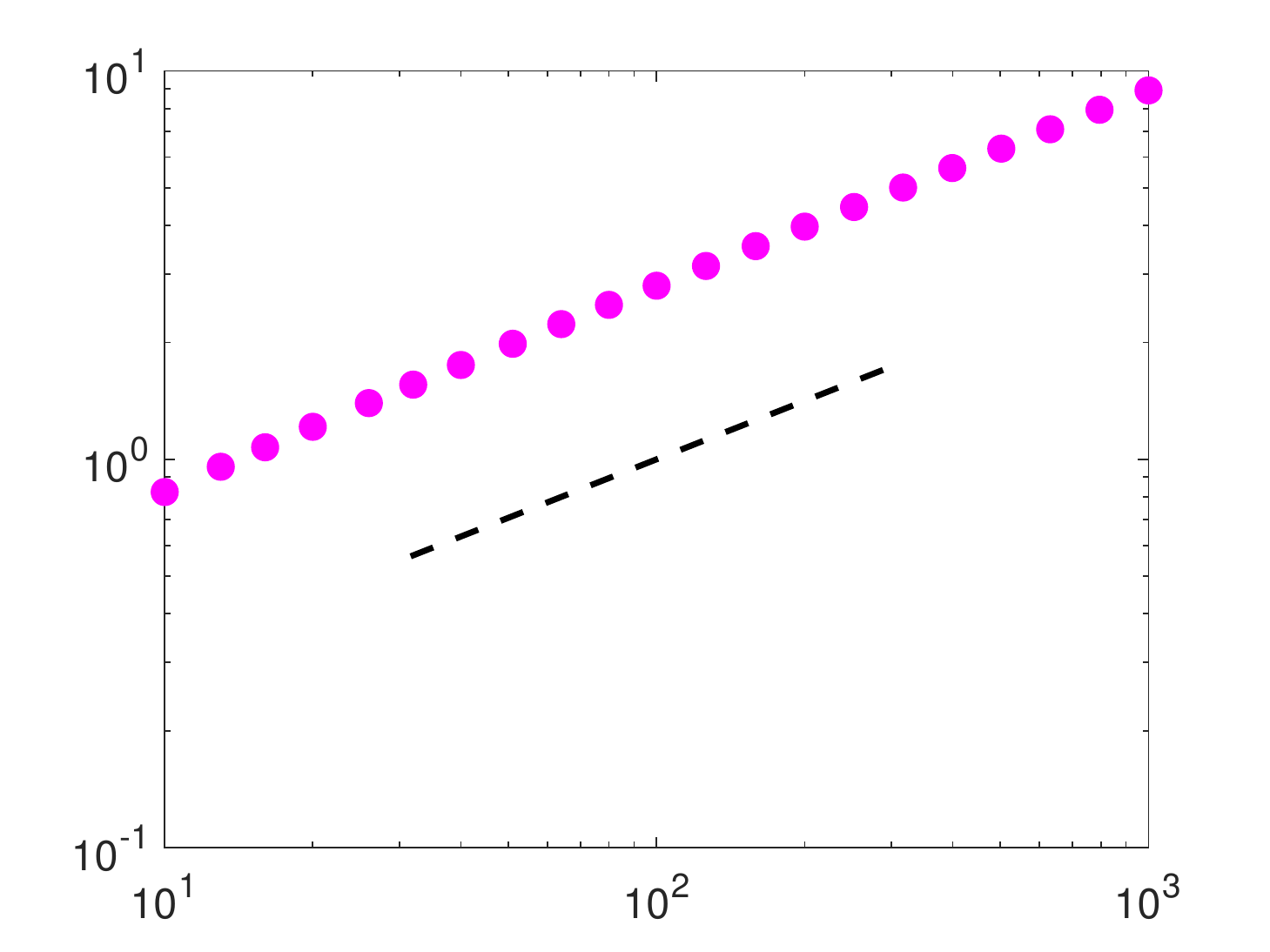}
\put (49,-2) {\small $n_0$}
\put (35,25) {\small \rotatebox{21}{$\mathcal{O}(\sqrt{n_0})=\mathcal{O}({n}^{1/4})$}}
\end{overpic}
\end{minipage}\vspace{-3mm}
\caption{The minimum absolute value of the spurious eigenvalues as a function of $n$. Left: The acoustic wave 1D example. Right: The acoustic wave 2D example, where the true discretization size is $n=n_0(n_0-1)$.}
\label{fig:Acoustic_Wave_1D_B}
\end{figure}
 
We repeat the experiment with the value $\chi=1.0001$ so that~\cref{eq:AcousticWave1D} no longer has an empty spectrum. Again, we discretize~\cref{eq:AcousticWave1D} using FEMs~\cite{harari1996recent} with $n = 10$, $100$, and $500$ and compute the eigenvalues of the matrix NEP using \texttt{polyeig}. The computed eigenvalues are now converging as $n\rightarrow\infty$; however, the rate is very slow (see~\cref{fig:Acoustic_Wave_1D} (right)). This example shows that even when the eigenvalues of the discretization are converging, it can be computationally prohibitive if the rate is slow.

\subsection{Example 2: Two-dimensional acoustic wave}\label{sec:AcousticWave2d}
In the NLEVP collection, a 2D acoustic wave example goes under the name of \texttt{acoustic\_wave\_2d} and is discretized using FEMs. The NEP is posed on $L^2([0,1]^2)$ and given by
\begin{equation}\setlength\abovedisplayskip{6pt}\setlength\belowdisplayskip{6pt}
\label{acoustic_wave_2d_problem_def}
\begin{gathered}
\frac{\partial^2p}{\partial x^2}+\frac{\partial^2p}{\partial y^2} + 4\pi^2\lambda^2p=0\\
p(0,y)=p(x,0)=p(x,1)=0,\quad\chi\frac{\partial p}{\partial x}(1,y)+2\pi i\lambda p(1,y)=0,
\end{gathered}
\end{equation}
with the same meaning of the parameters as in~\cref{eq:AcousticWave1D}.

\begin{figure}
\centering
\begin{minipage}[b]{1\textwidth}
\centering
\begin{overpic}[width=0.49\textwidth,trim={0mm 0mm 0mm 0mm},clip]{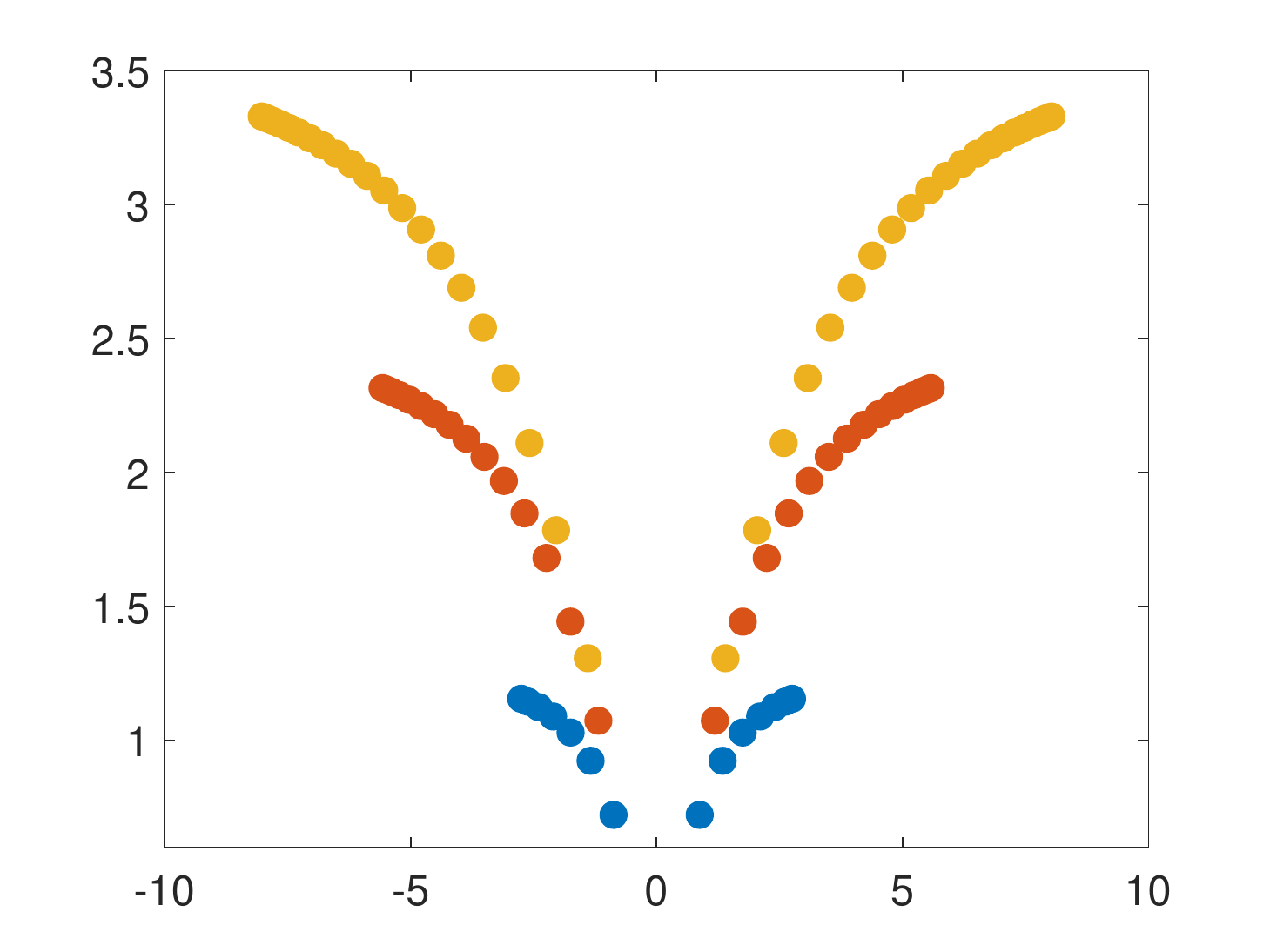}
\put (45,-2) {\small $\mathrm{Re}(\lambda)$}
\put (0,35) {\small\rotatebox{90}{$\mathrm{Im}(\lambda)$}}
\put (60,13) {\small \rotatebox{20}{$n=56$}}
\put (68,35) {\small \rotatebox{20}{$n=240$}}
\put (73,55) {\small \rotatebox{20}{$n=506$}}
\end{overpic}
\begin{overpic}[width=0.49\textwidth,trim={0mm 0mm 0mm 0mm},clip]{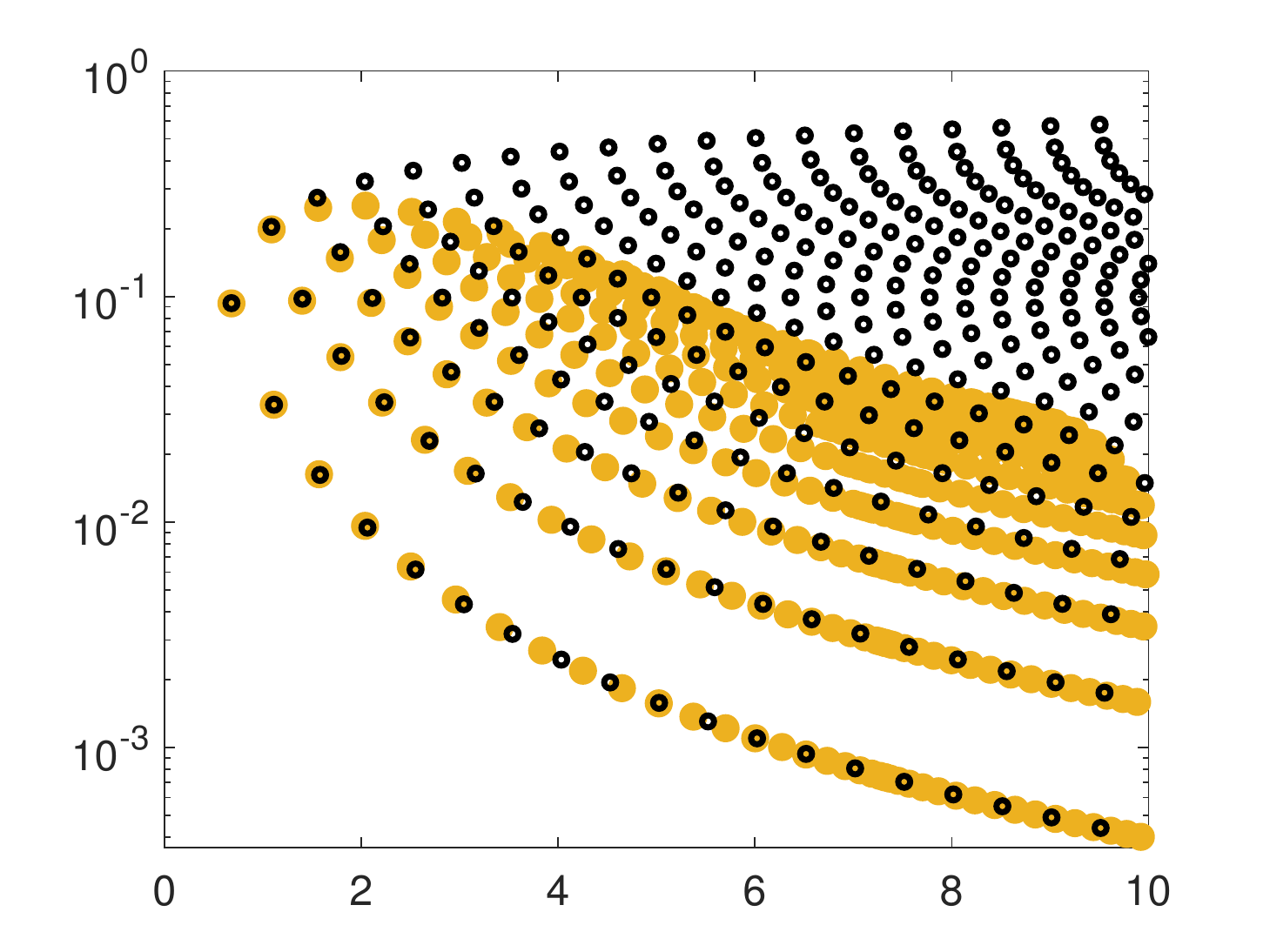}
\put (45,-2) {\small $\mathrm{Re}(\lambda)$}
\put (0,35) {\small\rotatebox{90}{$\mathrm{Im}(\lambda)$}}
\end{overpic}
\end{minipage}\vspace{-3mm}
\caption{The spectra of the acoustic wave 2D example for $\chi=1$. Left: Eigenvalues of the discretized problem, \texttt{acoustic\_wave\_2d}, for discretization sizes $n = 56$, $240$, and $506$. The discretization sizes are constrained to be of the form $n=n_0(n_0-1)$. There are no eigenvalues of~\cref{acoustic_wave_2d_problem_def} in this region. Right: A region in the complex plane close to the real axis that does contain eigenvalues of~\cref{acoustic_wave_2d_problem_def}. These are shown as black dots and are computed using InfBeyn. The yellow dots are eigenvalues of the discrete problem for $n=506$, showing convergence to a proportion of them. We only show a region in the right-half plane because the spectrum of the infinite-dimensional NEP and eigenvalues of the discretization are symmetric about the imaginary axis.}
\label{fig:Acoustic_Wave_2D}
\end{figure}

We first select $\chi=1$. For this value of $\chi$, the spectra of~\cref{acoustic_wave_2d_problem_def} is nonempty, unlike the 1D case. However,~\cref{acoustic_wave_2d_problem_def} has no eigenvalues in the region $[-10,10]\times [0.6,4]$ in the complex plane, which can be proved using the trace formula in~\cref{eq:trace_formula}. Despite this, the matrix NEPs for $n=56$, $240$, and $506$ have spurious eigenvalues in that region caused by the discretization (see~\cref{fig:Acoustic_Wave_2D} (left)).~\cref{fig:Acoustic_Wave_1D_B} (right) shows just how severe this is, the spurious eigenvalues only exit the disc of radius $10$ after a discretization size in excess of $10^6$. There is a region close to the real axis in the complex plane for which the discretizations do correctly approximate the spectra (see~\cref{fig:Acoustic_Wave_2D} (right))—determining which regions the discretization will have spurious eigenvalues and which regions the computed eigenvalues can be trusted seems challenging.\footnote{To give an idea of how hard this is, the location of spectral pollution for linear eigenvalue problems has only very recently been characterized in any sense of generality~\cite{bogli2020essential}.} In contrast, InfBeyn correctly returns no eigenvalues in the region $[-10,10]\times [0.6,4]$ in the complex plane and accurately computes the eigenvalues close to the real axis. In summary, the discretizations exhibit spurious eigenvalues in one region and in another, exhibit slow convergence. We find this example cause for concern because the two regions are relatively close together in the complex plane, making it challenging to identify spectral pollution after discretization.

When $\chi\not\in(-\infty,-1]\cup[1,\infty)$, a subset of the spectrum is given by an infinite number of simple eigenvalues that obey the following asymptotic formula:
$$\setlength\abovedisplayskip{6pt}\setlength\belowdisplayskip{6pt}
\lambda_k\sim {\mathrm{sign}\left[\mathrm{Re}\left(i\sqrt{\frac{1}{\chi^2-1}}\right)\right]}k/({2\sqrt{1-1/\chi^2}}),\qquad k\rightarrow\infty,
$$
where the sign function is required to take care of branch cuts. We now take $\chi=0.8$, and for this value of $\chi$, the eigenvalues of~\cref{acoustic_wave_2d_problem_def} that obey the above asymptotic formula are purely imaginary. \cref{fig:Acoustic_Wave_2D2} shows the approximation of these eigenvalues using the discrete problem with different discretization sizes. Again, the eigenvalues of the infinite-dimensional problem are shown as black circles and computed using InfBeyn. The eigenvalues of the discrete problem are symmetric across the imaginary axis. As the $k$th pair approach the imaginary axis, they collide, and the pair splits. One eigenvalue converges to $\lambda_k$, while the other shoots off to infinity. In other words, the discrete problem overestimates the actual multiplicity. 

\begin{figure}
\centering
\begin{minipage}[b]{1\textwidth}
\centering
\begin{overpic}[width=0.32\textwidth,trim={0mm 0mm 0mm 0mm},clip]{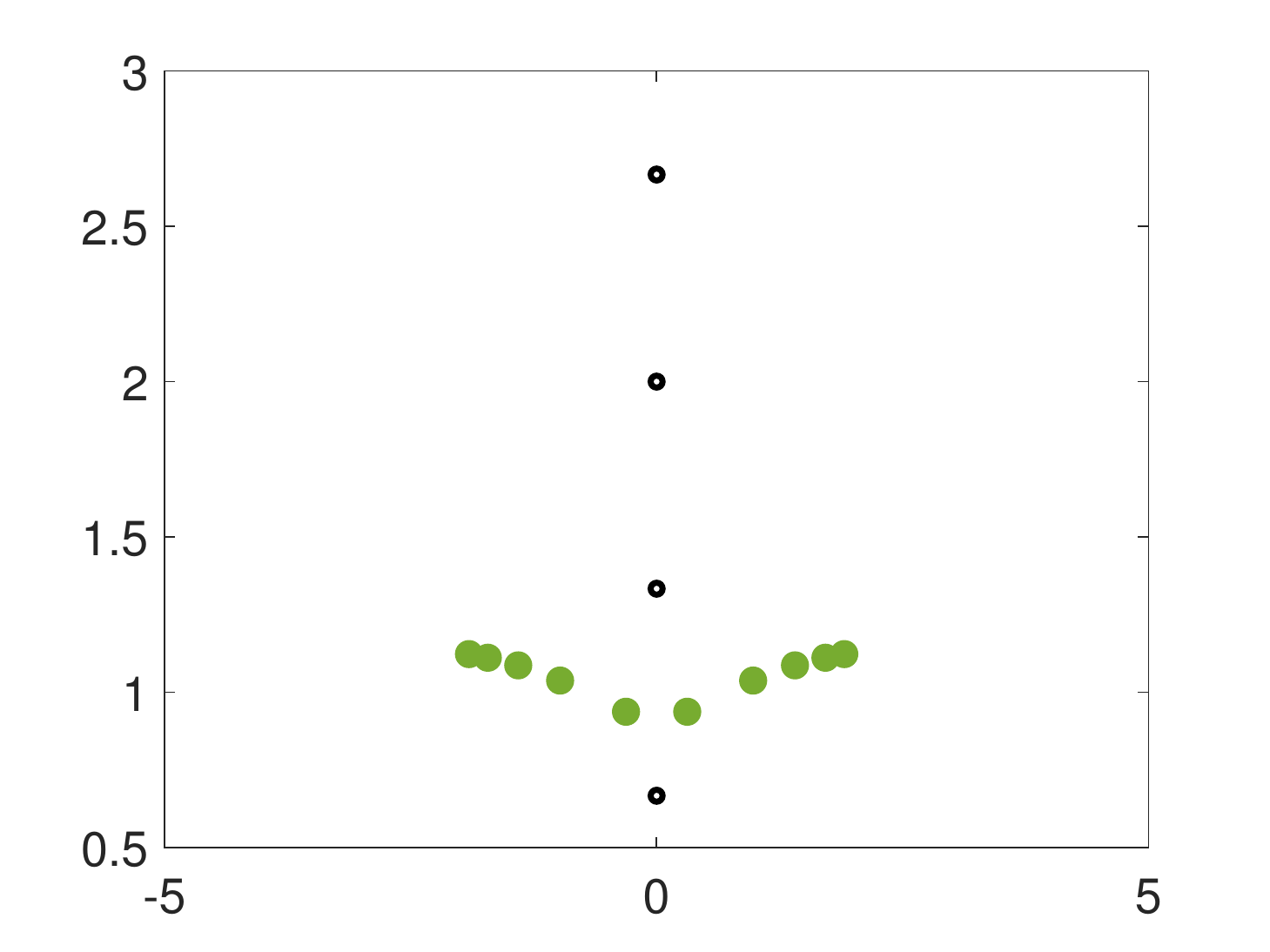}
\put (45,-2) {\small $\mathrm{Re}(\lambda)$}
\put (0,35) {\small\rotatebox{90}{$\mathrm{Im}(\lambda)$}}
\put (44,72) {\small $n=30$}
\end{overpic}
\begin{overpic}[width=0.32\textwidth,trim={0mm 0mm 0mm 0mm},clip]{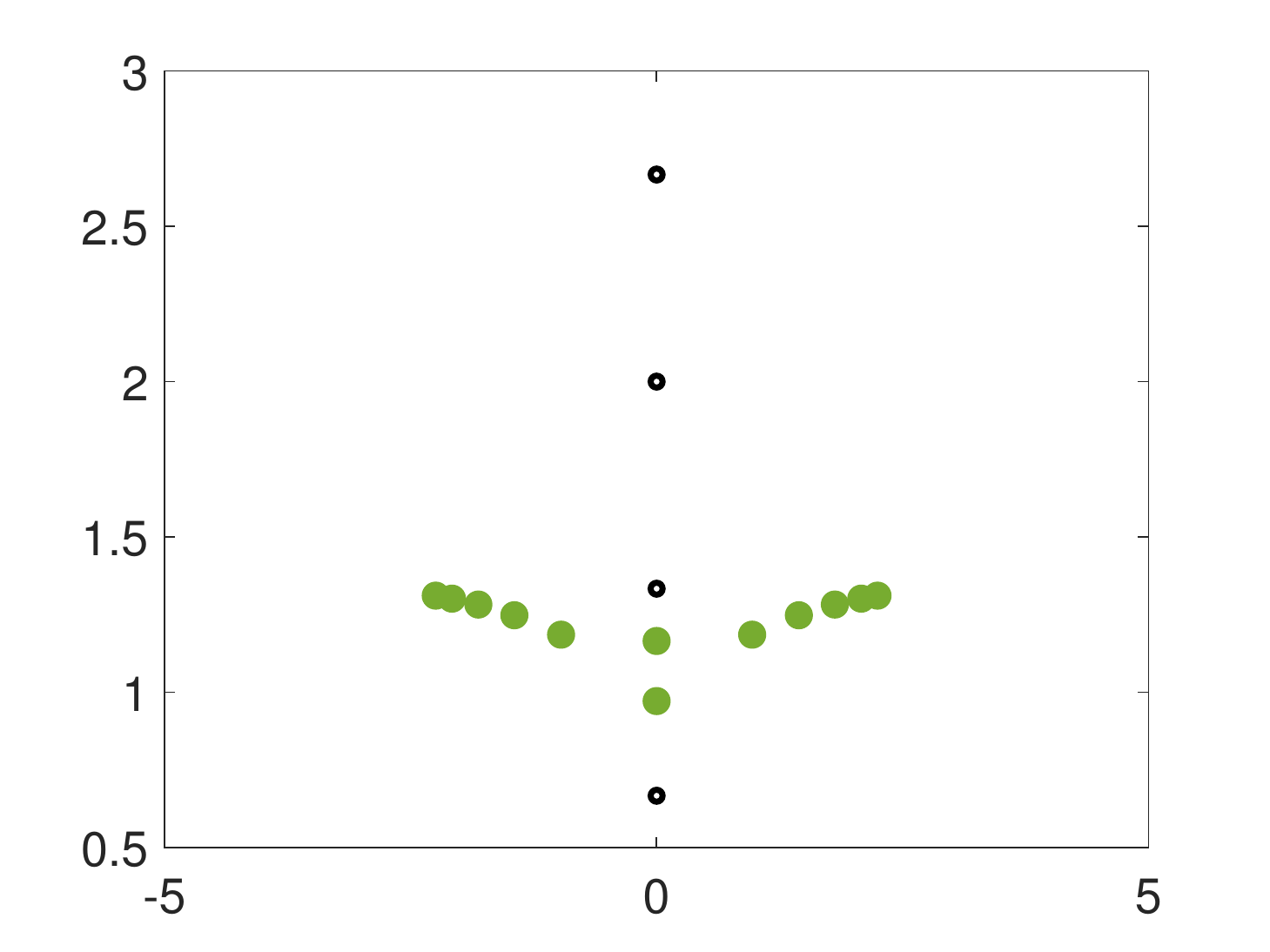}
\put (45,-2) {\small $\mathrm{Re}(\lambda)$}
\put (0,35) {\small\rotatebox{90}{$\mathrm{Im}(\lambda)$}}
\put (44,72) {\small $n=42$}
\end{overpic}
\begin{overpic}[width=0.32\textwidth,trim={0mm 0mm 0mm 0mm},clip]{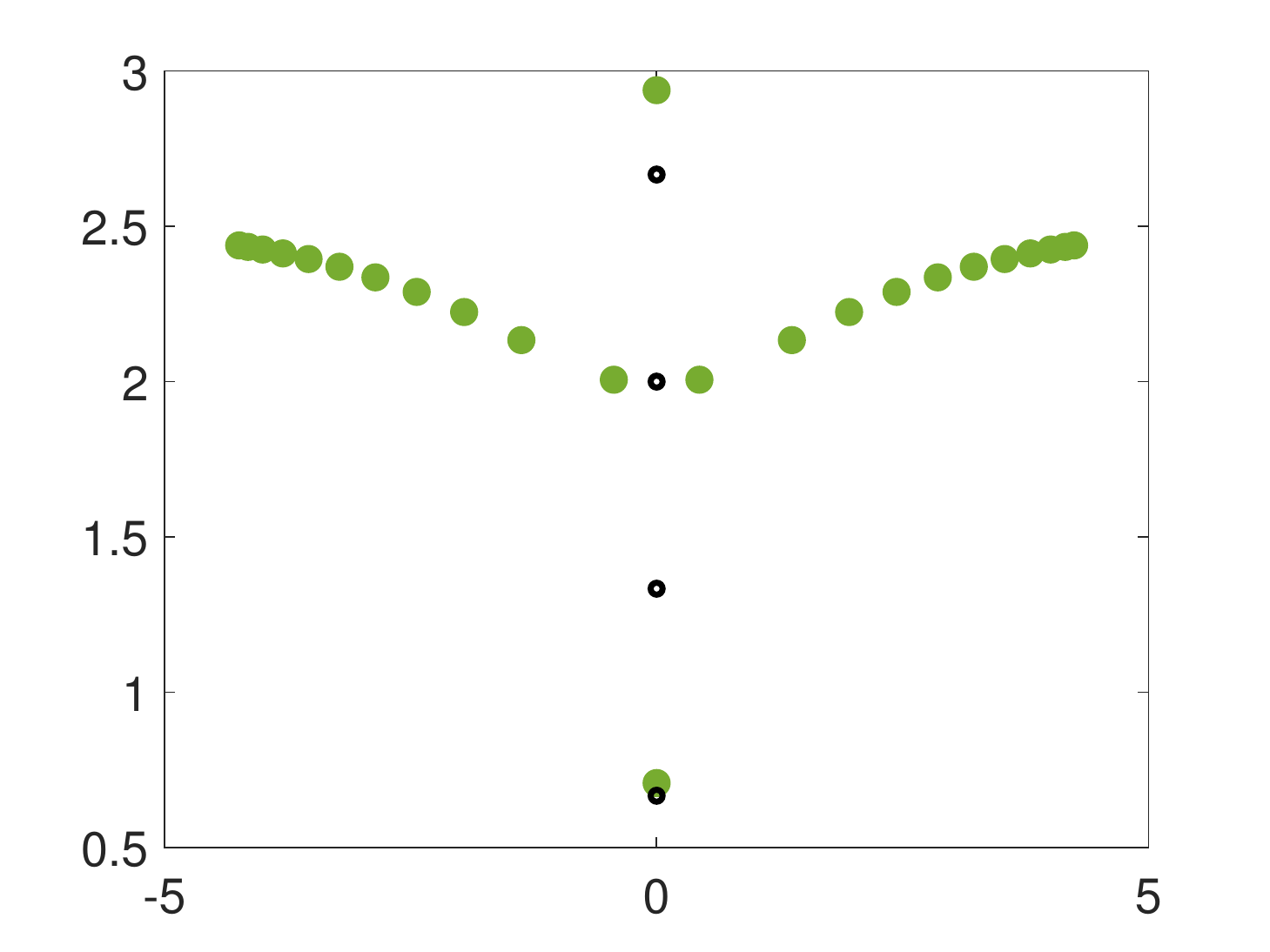}
\put (45,-2) {\small $\mathrm{Re}(\lambda)$}
\put (0,35) {\small\rotatebox{90}{$\mathrm{Im}(\lambda)$}}
\put (44,72) {\small $n=156$}
\end{overpic}
\end{minipage}\vspace{-3mm}
\caption{As the discretization size increases, we observe the eigenvalues of the discretization (green dots) collide onto the imaginary axis, and a few converge to the eigenvalues of the infinite-dimensional problem (black dots). The eigenvalues of the discretization are potentially misleading because the eigenvalues of the infinite-dimensional problem are simple. Still, it appears that two eigenvalues of the discretization are converging to each one.}
\label{fig:Acoustic_Wave_2D2}
\end{figure}

\subsection{Example 3: Butterfly}\label{sec:butterfly}
As our next example, we further show the importance of verification of approximated pseudospectra in~\cref{pseudospectra_verified} and that our techniques are not limited to differential operators. We consider the NEP called \texttt{butterfly} from the NLEVP collection, which is a rational NEP constructed from truncations of bilateral shift operators on $\ell^2(\mathbb{Z})$~\cite{mehrmann2002polynomial}. The pencil depends on a vector $c\in\mathbb{C}^{10}$ which we take as $c = [0.2i, 0, 1.3, 0, 0.1, 0, 1, 0, 0, 0]$.

\cref{fig:butterfly} (left) shows the eigenvalues and pseudospectra of the discretized problem using matrix sizes $n=500$. The eigenvalues appear to converge to four arcs in the complex plane as $n\rightarrow\infty$. In the right of~\cref{fig:butterfly}, we show pseudospectra computed using the functions $\gamma_n$ from~\cref{def_gamma_n}. The operators are infinite banded matrices acting on $l^2(\mathbb{Z})$, and hence it is straightforward to compute $\gamma_n$ directly using rectangular truncations. We use a $\lambda$-adaptive truncation size to ensure convergence of the plot. The plots clearly show that the discretized operator suffers from spectral pollution, invisibility, and destabilization. For this particular example, changing the discretization to circulant matrices approximating the shift is better. However, in general, such a procedure is not guaranteed to circumvent the issues caused by discretization.

\begin{figure}
\centering
\begin{minipage}[b]{1\textwidth}
\centering
\begin{overpic}[width=0.49\textwidth,trim={0mm 0mm 0mm 0mm},clip]{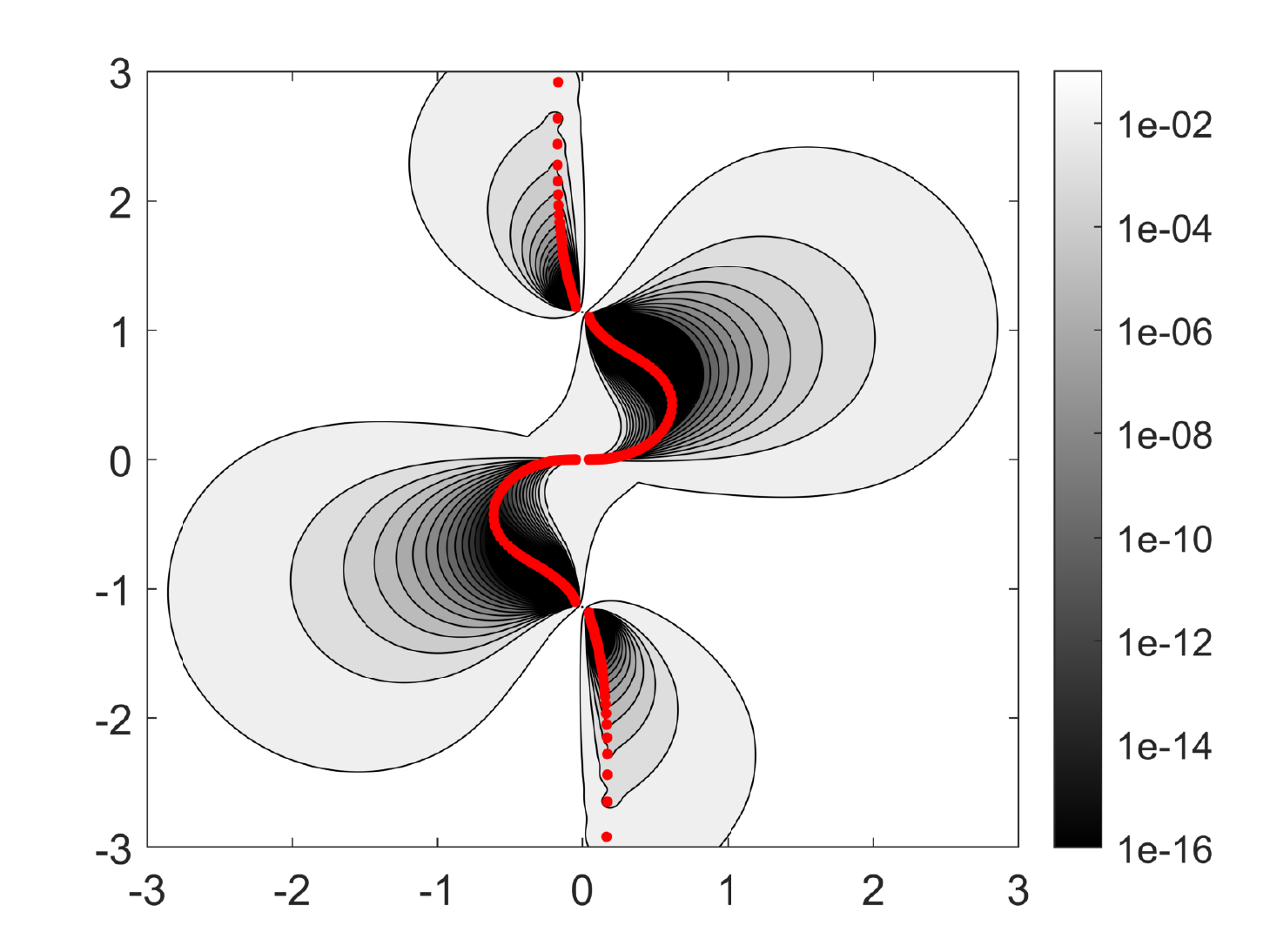}
\put (45,-2) {\small $\mathrm{Re}(\lambda)$}
\put (0,35) {\small\rotatebox{90}{$\mathrm{Im}(\lambda)$}}
\end{overpic}
\begin{overpic}[width=0.49\textwidth,trim={0mm 0mm 0mm 0mm},clip]{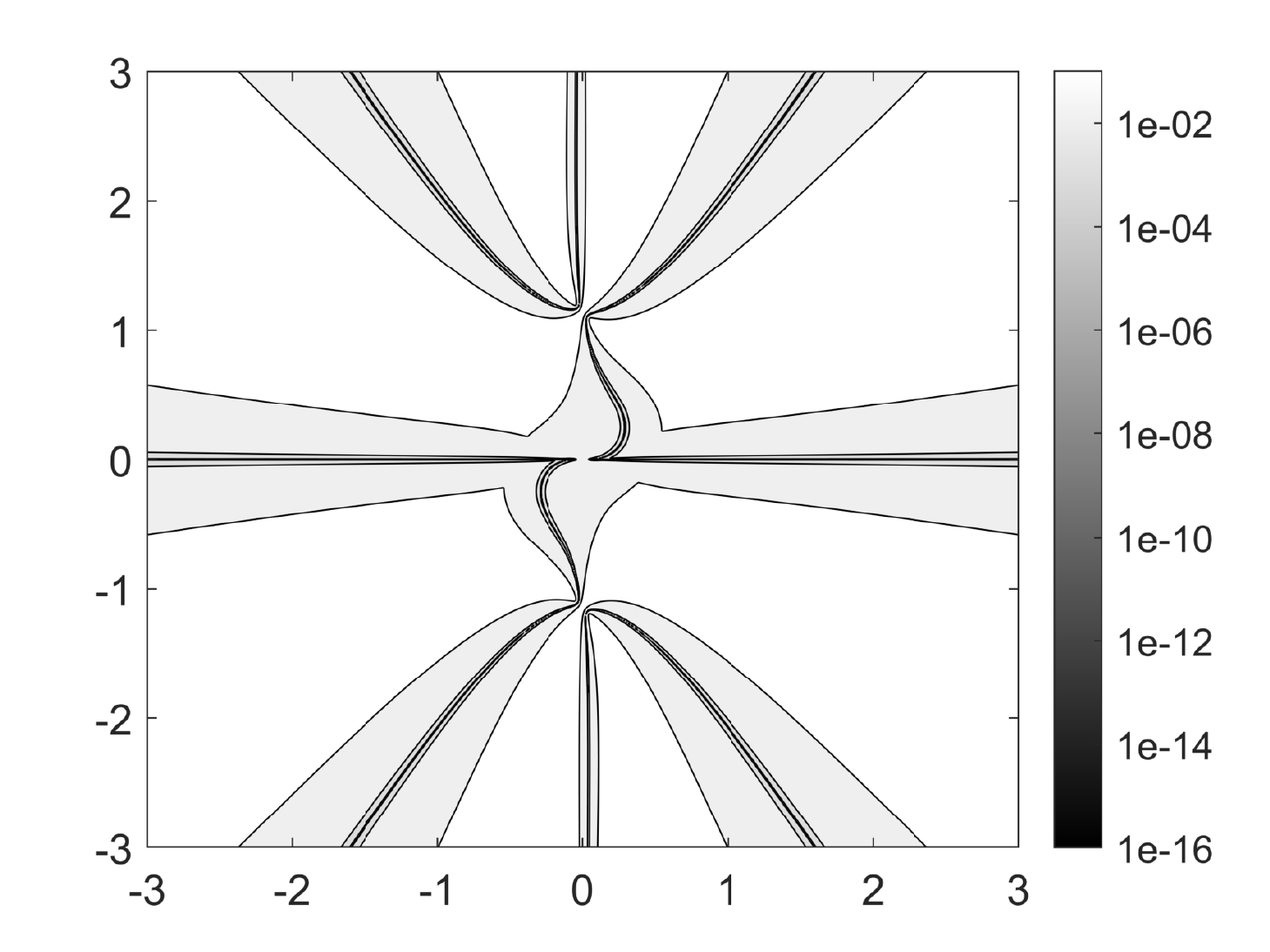}
\put (45,-2) {\small $\mathrm{Re}(\lambda)$}
\put (0,35) {\small\rotatebox{90}{$\mathrm{Im}(\lambda)$}}
\end{overpic}
\end{minipage}\vspace{-3mm}
\caption{The computed pseudospectra of the butterfly NEP. Left: The pseudospectra of the discretized pencil using matrices of size $n=500$. The eigenvalues are shown as red dots and converge to the union of four arcs as $n\rightarrow\infty$. Right: The pseudospectra computed using the functions $\gamma_n$ from~\cref{def_gamma_n} and an adaptive truncation size. These pseudospectra are guaranteed to be inside the pseudospectra of the infinite-dimensional problem and converge as $n\rightarrow\infty$.}
\label{fig:butterfly}
\end{figure}

\subsection{Example 4: Damped beam}\label{sec:damped_beam} 
We now consider the NEP that goes by the name \texttt{damped\_beam} in the NLEVP collection. It is given by
\begin{equation}\setlength\abovedisplayskip{6pt}\setlength\belowdisplayskip{6pt}
\label{damped_beam_problem_def}
\frac{d^4 v}{dx^4}(x)-\alpha_0 \lambda^2 v(x)=\beta \lambda v(x) \delta(x-{1}/{2}),\quad v(0)=\frac{d^2v}{dx^2}(0) =v(1)=\frac{d^2v}{dx^2}(1)=0,
\end{equation}
where $\alpha_0,\beta<0$ are fixed physical constants and $v$ represents the transverse displacement of the beam. We take the default NLEVP parameter values of $\alpha_0=-0.018486857142857$ and $\beta=-0.137142857142857$. The function $\delta(\cdot)$  in~\cref{damped_beam_problem_def} is the delta function and is interpreted as continuity of $v$, $v'$, and $v''$ at $x=1/2$, but with a jump in $v'''$, i.e.,
$$\setlength\abovedisplayskip{6pt}\setlength\belowdisplayskip{6pt}
\lim_{\epsilon\downarrow 0} \left[ \frac{d^3 v}{dx^3}(1/2+\epsilon)-\frac{d^3 v}{dx^3}(1/2-\epsilon)\right]=\beta \lambda v(1/2).
$$ 
The NEP is a quadratic eigenvalue problem that arises in the vibration analysis of a beam supported at both ends and damped in the middle~\cite{higham2008scaling}. 

We discretize~\cref{damped_beam_problem_def} using a finite element method with cubic Hermite polynomials as the interpolation shape functions~\cite{collar1987matrices}. There are two groups of eigenvalues for~\cref{damped_beam_problem_def}. The first group is purely imaginary and given by the following formula:
$$\setlength\abovedisplayskip{6pt}\setlength\belowdisplayskip{6pt}
\lambda^{(1,\pm)}_k=\pm{4\pi^2 k^2i}/{\sqrt{-\alpha_0}},\quad k\geq 0, 
$$
with corresponding eigenfunctions that vanish at $x=1/2$. The second group has the following asymptotic formula: 
\begin{equation}\setlength\abovedisplayskip{6pt}\setlength\belowdisplayskip{6pt}
\label{asymptotics_damped_beam}
\lambda^{(2,\pm)}_k = \frac{4}{\sqrt{\alpha_0}}\left[\pm\left(k\pi-\frac{\pi}{2}\right)i +\frac{\beta}{8k\pi \sqrt{-\alpha_0}} \right]^2+\mathcal{O}\left(\frac{1}{k}\right),\qquad k\rightarrow\infty.
\end{equation}
Asymptotic formulas are beneficial for contour-based methods as they inform us of where to center contours. In addition, if the asymptotic formula comes with an explicit error estimate, then we can choose the size of the contour. The asymptotic formula in~\cref{asymptotics_damped_beam} allows us to compute $\lambda^{(2,\pm)}_k$ for large $k$ using InfBeyn with a circular contour of radius $1$ centered at~\cref{asymptotics_damped_beam}. An alternative to asymptotics are localization theorems for NEPs~\cite{bindel2015localization}, which are also very useful for contour methods.

\begin{figure}
\centering
\begin{minipage}[b]{1\textwidth}
\centering
\begin{overpic}[width=0.49\textwidth,trim={0mm 0mm 0mm 0mm},clip]{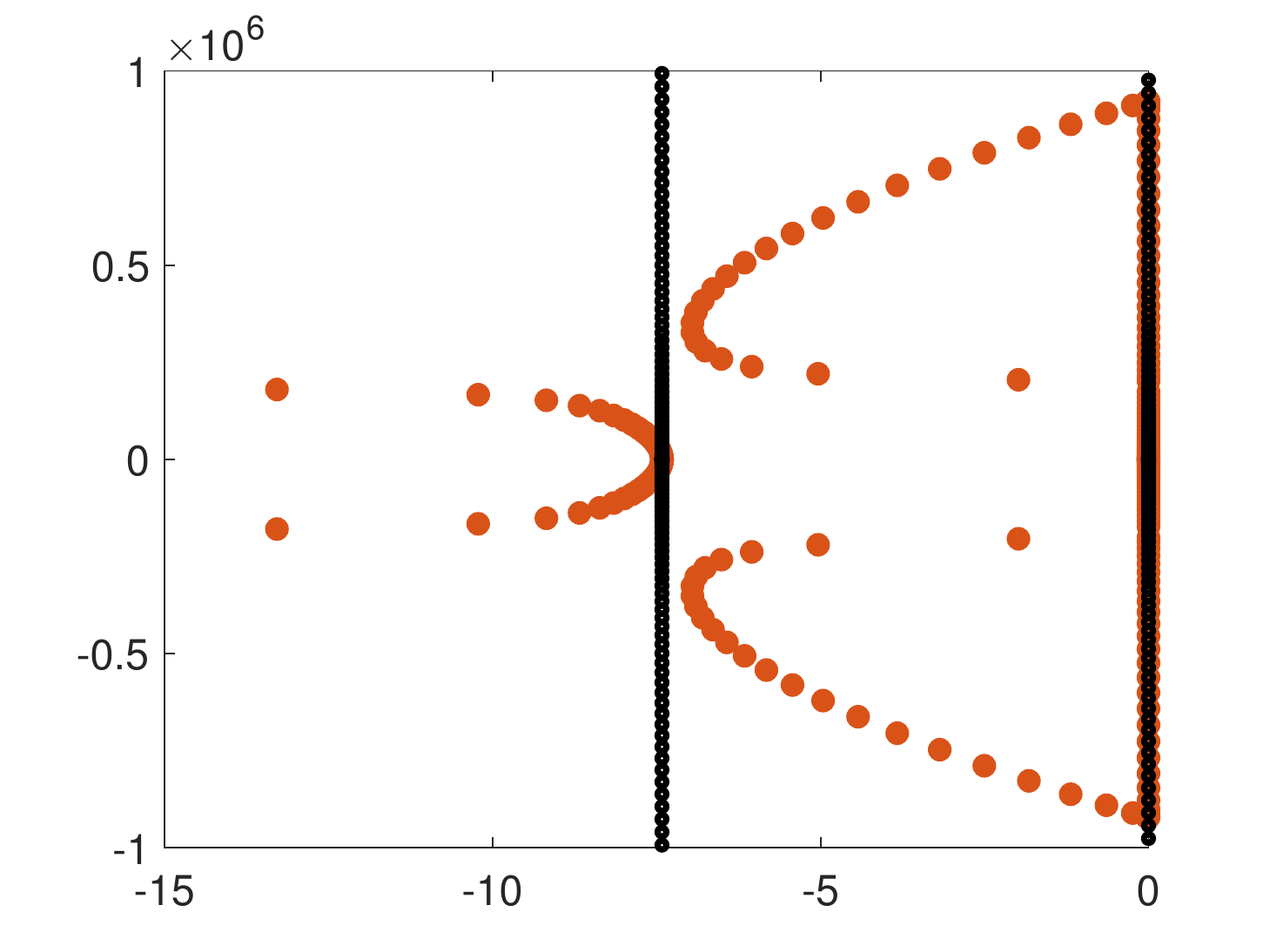}
\put (45,-2) {\small $\mathrm{Re}(\lambda)$}
\put (0,35) {\small\rotatebox{90}{$\mathrm{Im}(\lambda)$}}
\end{overpic}
\begin{overpic}[width=0.49\textwidth,trim={0mm 0mm 0mm 0mm},clip]{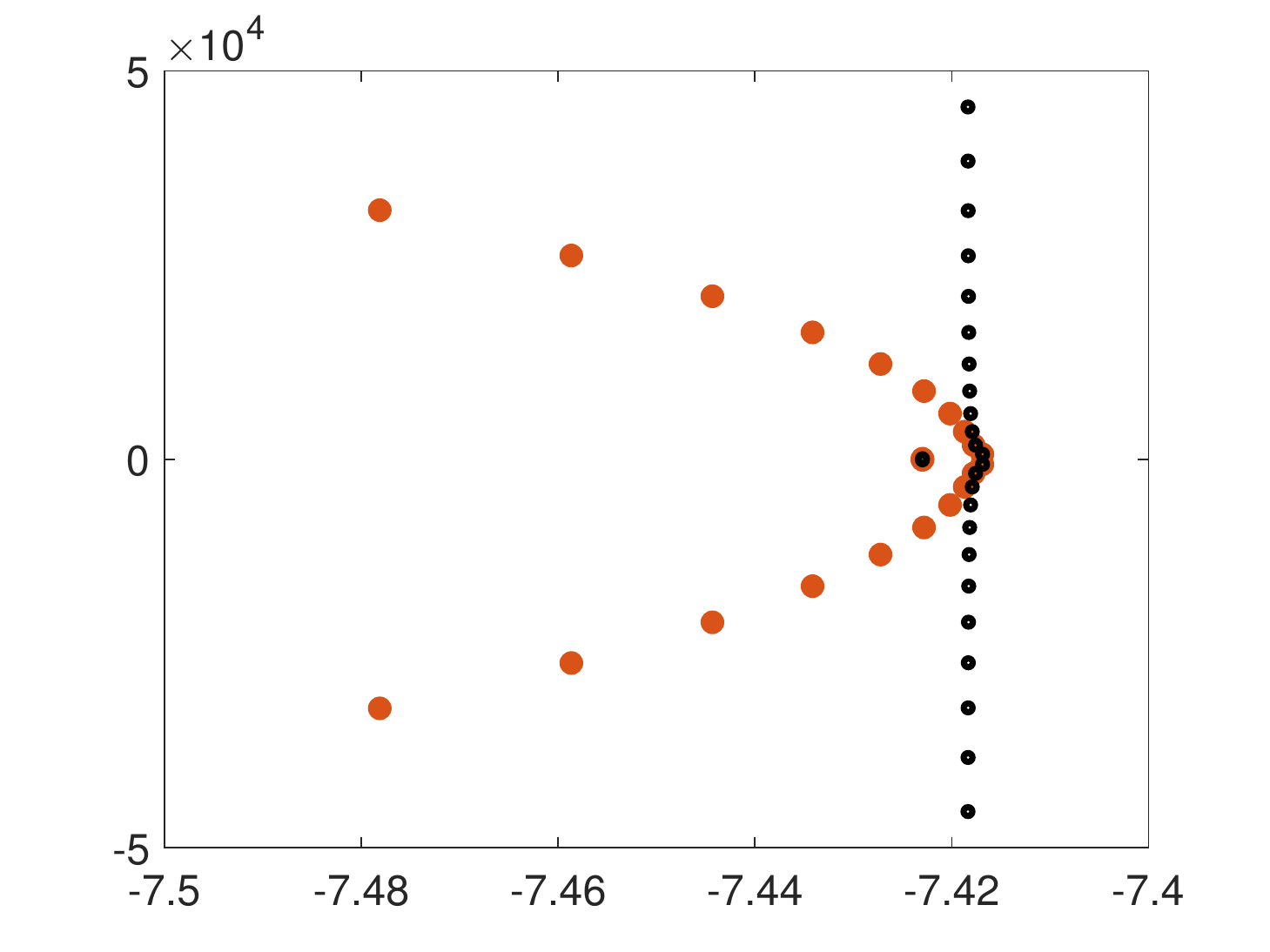}
\put (45,-2) {\small $\mathrm{Re}(\lambda)$}
\put (0,35) {\small\rotatebox{90}{$\mathrm{Im}(\lambda)$}}
\end{overpic}
\end{minipage}\vspace{-3mm}
\caption{Computed eigenvalues of discretization (red dots) compared to the eigenvalues computed from InfBeyn (black dots). Left: Eigenvalues in the region $[-15,0]\times [-10^6,10^6]$. Right: A magnified picture of the eigenvalues in the region $[-7.5,-7.4]\times [-5\times 10^4,5\times 10^4]$. The eigenvalues of the discretized NEP show significant errors.}
\label{fig:damped_beam_1}
\end{figure}

\begin{figure}
\centering
\begin{minipage}[b]{1\textwidth}
\centering
\begin{overpic}[width=0.49\textwidth,trim={0mm 0mm 0mm 0mm},clip]{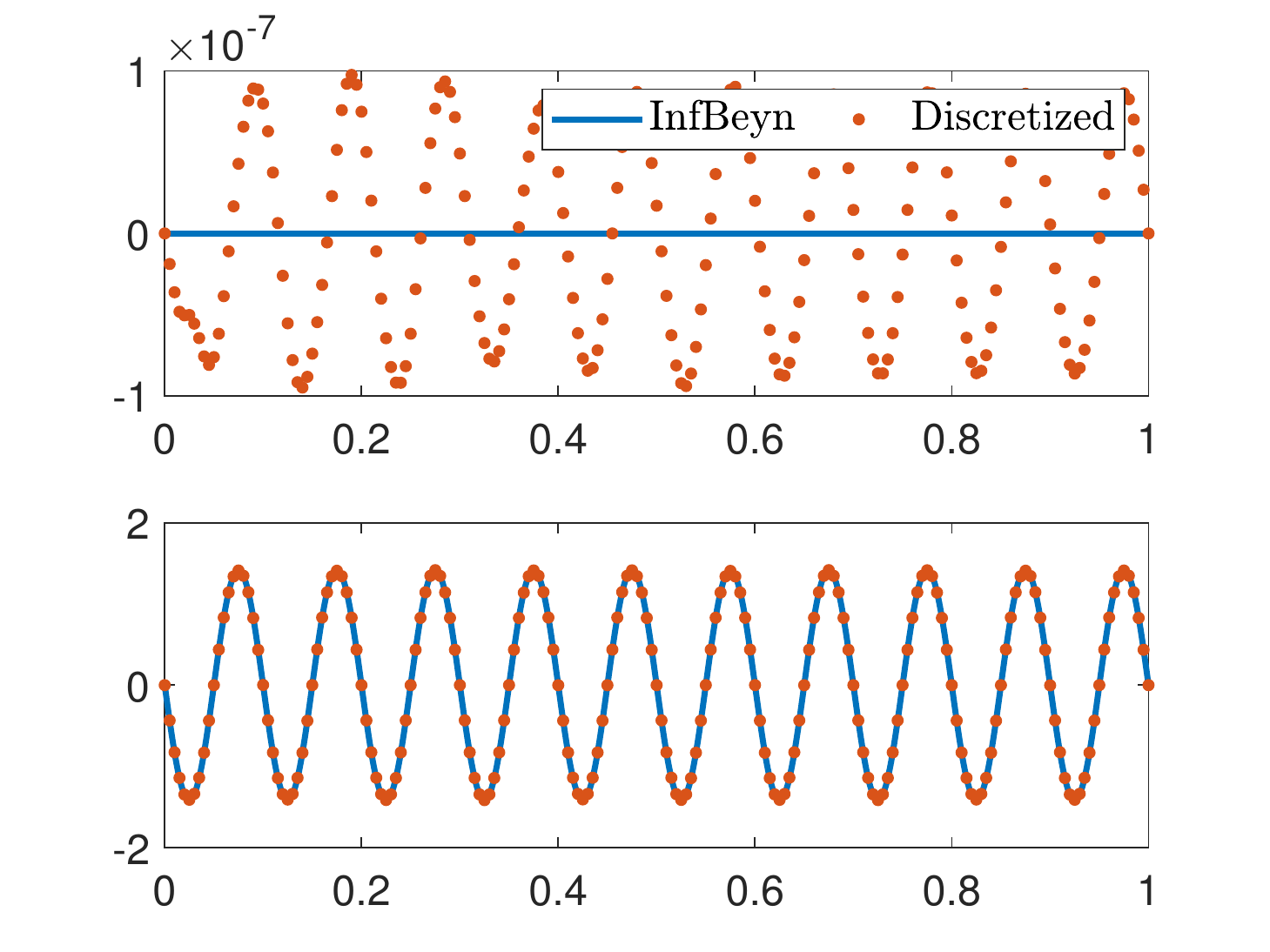}
\put (49,-2) {\small $x$}
\put (2,50) {\small \small\rotatebox{90}{$\mathrm{Re}(v)$}}
\put (2,15) {\small \small\rotatebox{90}{$\mathrm{Im}(v)$}}
\end{overpic}
\begin{overpic}[width=0.49\textwidth,trim={0mm 0mm 0mm 0mm},clip]{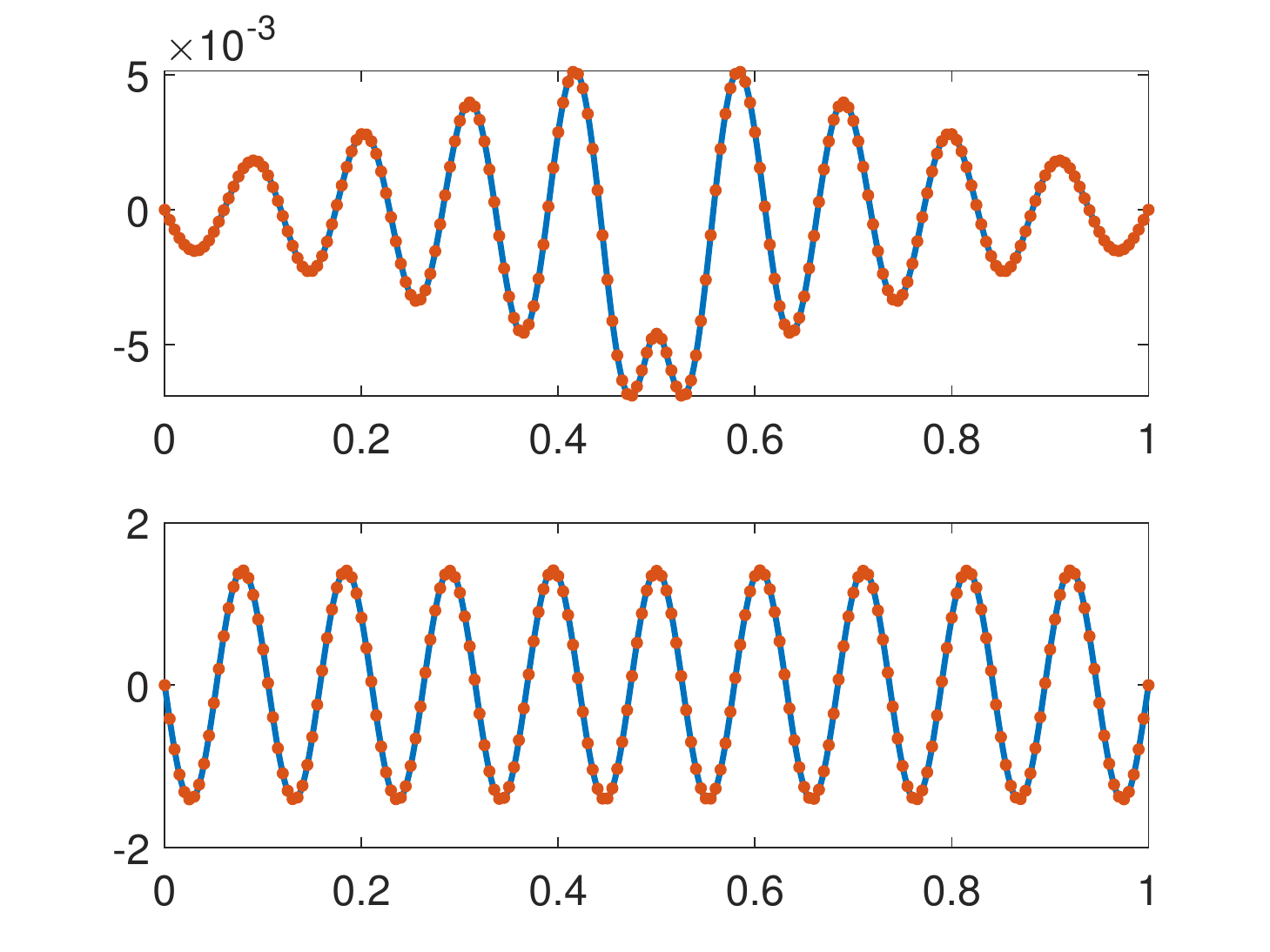}
\put (49,-2) {\small $x$}
\put (2,50) {\small \small\rotatebox{90}{$\mathrm{Re}(v)$}}
\put (2,15) {\small \small\rotatebox{90}{$\mathrm{Im}(v)$}}
\end{overpic}
\end{minipage}\vspace{-3mm}
\caption{The real (top row) and imaginary (bottom row) of the eigenfunctions corresponding to $\lambda^{(1,+)}_{10}$ (left) and $\lambda^{(2,+)}_{10}$ (right). Surprisingly, the eigenfunctions are well-resolved by the discrete NEP while the corresponding eigenvalues $\lambda^{(1,+)}_{10}$ and $\lambda^{(2,+)}_{10}$ are not.}
\label{fig:damped_beam_2}
\end{figure}

\cref{fig:damped_beam_1} shows the eigenvalues of the discretized problem for the discretization size $n=100$, as well as the eigenvalues computed using InfBeyn. Comparing InfBeyn's approximation of $\lambda^{(2,+)}_k$ for $1\leq k\leq 100$ and the first four terms of the asymptotics shows that InfBeyn has computed all of the eigenvalues in~\cref{fig:damped_beam_2} to relative error smaller than $10^{-12}$. The discretization does a good job of approximating the real part of the first group of eigenvalues $\{\lambda^{(1,\pm)}_k\}$, but only a handful of the eigenvalues are accurate due to errors in the imaginary part. Surprisingly,  we observe that the corresponding eigenfunctions are well-resolved by the discretization. For example, \cref{fig:damped_beam_2} shows the approximation of the eigenfunctions corresponding to $\lambda^{(1,+)}_{10}$ and $\lambda^{(2,+)}_{10}$. The $L^2([0,1])$ subspace angle between the approximate eigenfunction and the true eigenfunction (computed using InfBeyn) are approximately $10^{-3}$. However, the error in the approximation of $\lambda^{(1,+)}_{10}$ and $\lambda^{(2,+)}_{10}$, are 48.1040 and 35.5109, respectively. Therefore, unlike linear eigenvalue problems, resolving the eigenfunctions is insufficient for accurately computing the corresponding eigenvalues. We find this extremely unsettling.~\cref{fig:damped_beam_3} shows the computed eigenvalues when we replace $\alpha_0$~\cref{damped_beam_problem_def} by a variable coefficient $\alpha(x)$.\footnote{For NEPs consisting of coupled PDEs with constant coefficients, we can sometimes solve for the eigenvalue-dependent solution on each domain and reduce the problem to a finite-dimensional NEP relating the boundary values~\cite{spiders_embree}. This can be done for~\cref{damped_beam_problem_def} when all the coefficients are constant but cannot generally be done for variable coefficients.} The regions in the complex plane where the computed eigenvalues are reliable depends non-trivially on the coefficient.

\begin{figure}
\centering
\begin{minipage}[b]{1\textwidth}
\vspace{4mm}
\centering
\begin{overpic}[width=0.32\textwidth,trim={0mm 0mm 0mm 0mm},clip]{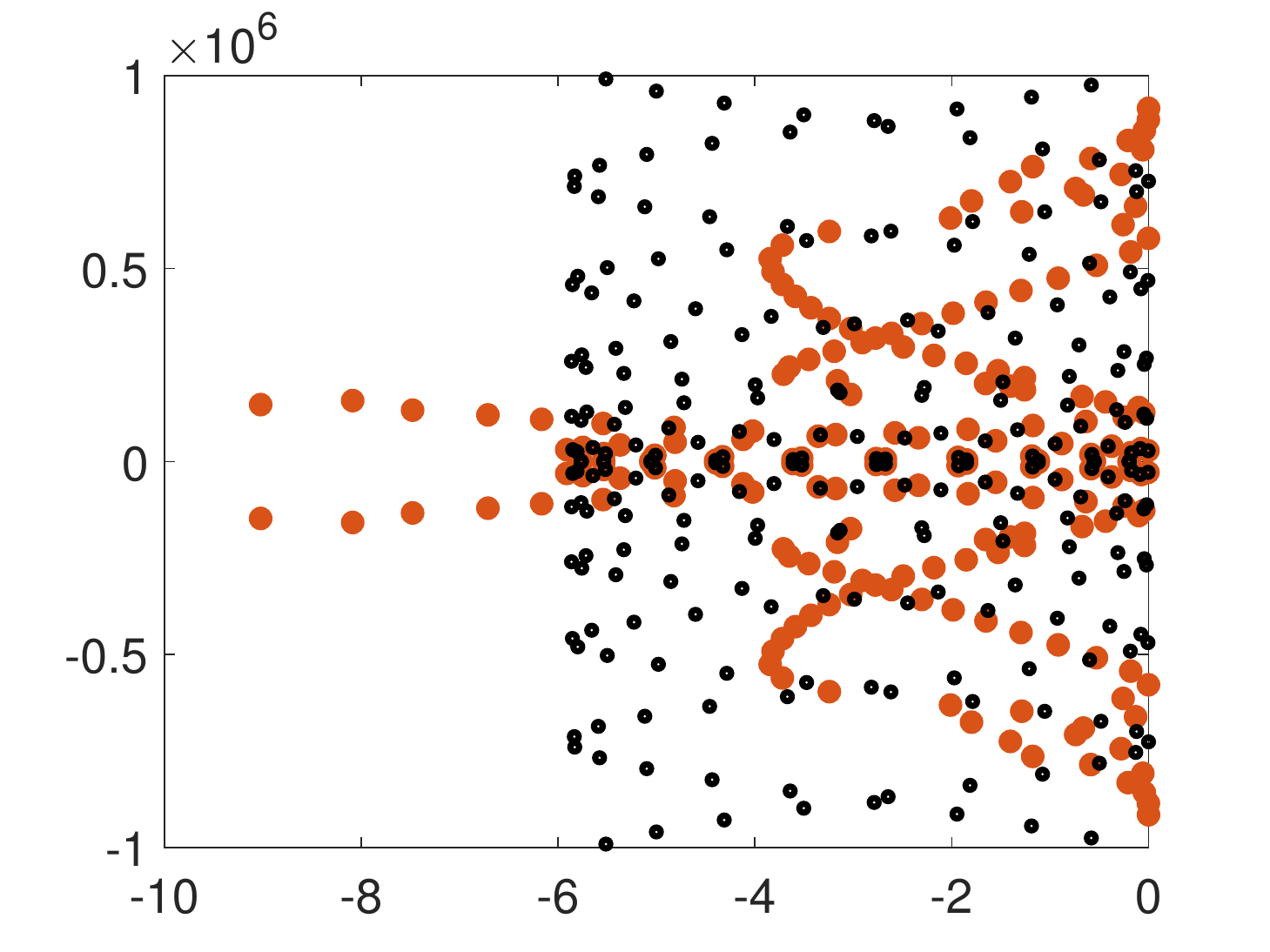}
\put (43,-4) {\small $\mathrm{Re}(\lambda)$}
\put (-2,30) {\small\rotatebox{90}{$\mathrm{Im}(\lambda)$}}
\put (23,76) {\small $\alpha(x)=\alpha_0(1+x^2)$}
\end{overpic}
\begin{overpic}[width=0.32\textwidth,trim={0mm 0mm 0mm 0mm},clip]{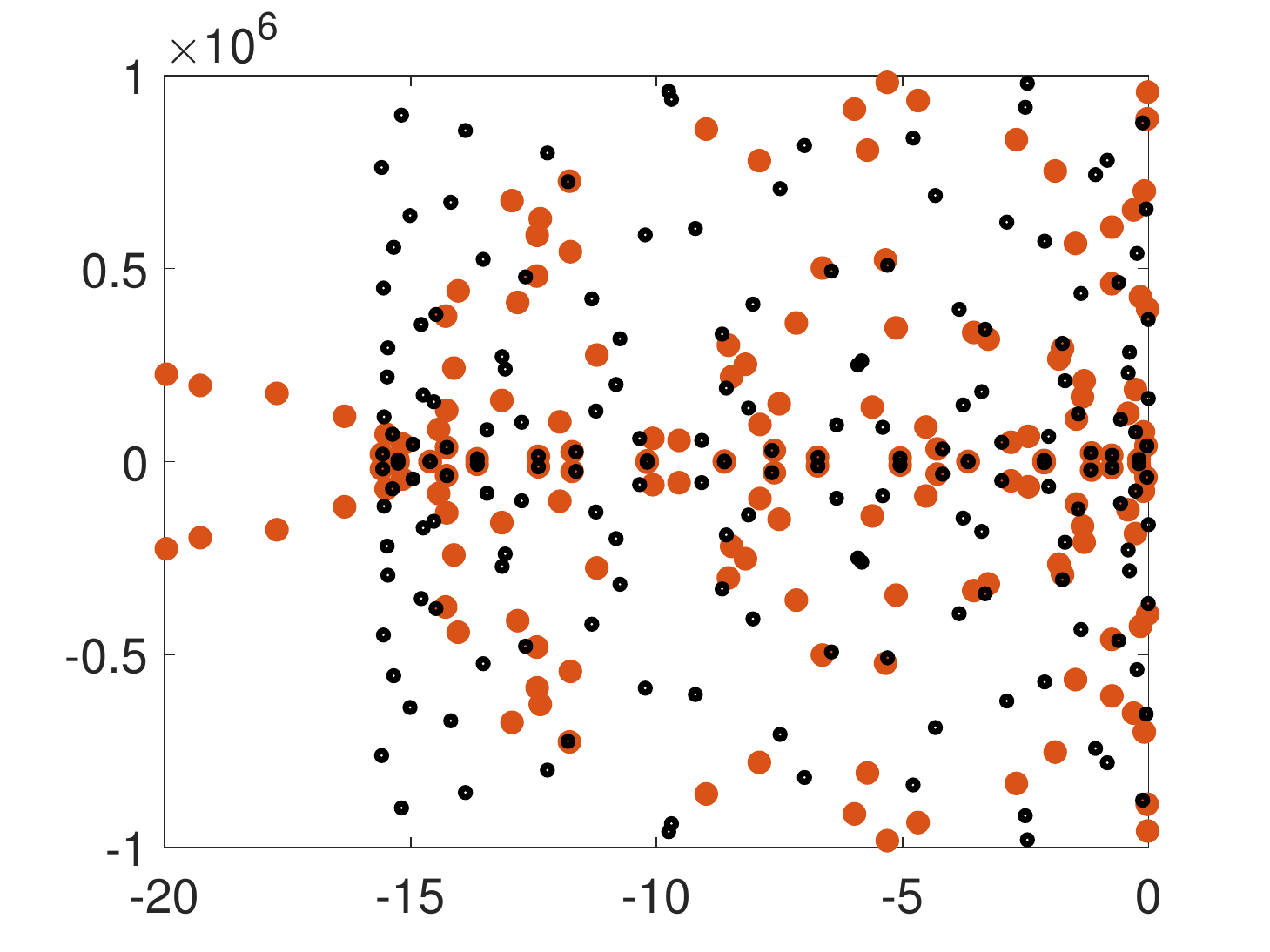}
\put (43,-4) {\small $\mathrm{Re}(\lambda)$}
\put (-2,30) {\small\rotatebox{90}{$\mathrm{Im}(\lambda)$}}
\put (30,76) {\small $\alpha(x)=\alpha_0x$}
\end{overpic}
\begin{overpic}[width=0.32\textwidth,trim={0mm 0mm 0mm 0mm},clip]{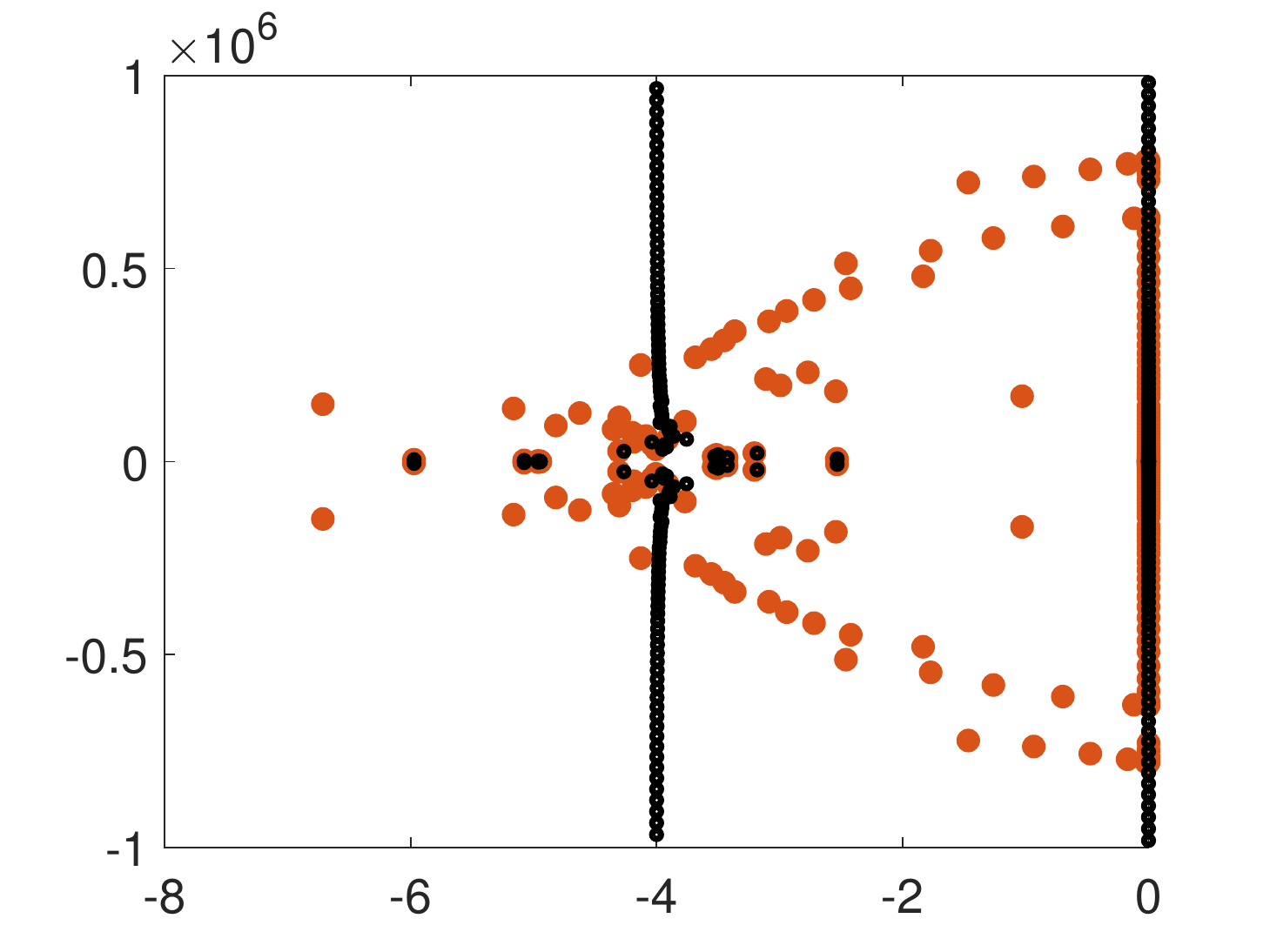}
\put (43,-4) {\small $\mathrm{Re}(\lambda)$}
\put (-2,30) {\small\rotatebox{90}{$\mathrm{Im}(\lambda)$}}
\put (5,76) {\small $\alpha(x)=\alpha_0(1+\cos(10\pi x)^2)$}
\end{overpic}
\end{minipage}\vspace{-3mm}
\caption{Same as~\cref{fig:damped_beam_1} but with the shown variable coefficient $\alpha(x)$ replacing $\alpha_0$ in~\cref{damped_beam_problem_def}.}
\label{fig:damped_beam_3}
\end{figure}

\subsection{Example 5: A loaded string}\label{sec:loaded_string}
Next, we look at another NEP in the NLEVP collection named \texttt{loaded\_string}. The NEP is given by
\begin{equation}\setlength\abovedisplayskip{6pt}\setlength\belowdisplayskip{6pt}
-\frac{d^2u}{d x^2}=\lambda u,\qquad u(0)=0,\quad\frac{d u}{d x}(1)+\frac{\lambda \kappa M}{\lambda -\kappa}u(1)=0.
\label{eq:loadedSpring}
\end{equation} 
It models the vibrations of a string with a load of mass $M$ attached to an
elastic spring with stiffness $\kappa$~\cite{solov2006preconditioned}. We use the default parameters $M=\kappa=1$.  The eigenvalues of physical interest lie in the interval $(\kappa,\infty)\subset\mathbb{R}$ and are solutions of
\begin{equation}\setlength\abovedisplayskip{6pt}\setlength\belowdisplayskip{6pt}
\label{loaded_string_analytic}
\cos\left(\sqrt{\lambda}\right) + \frac{\sqrt{\lambda}\kappa M}{\lambda-\kappa}\sin\left(\sqrt{\lambda}\right)=0.
\end{equation}

\begin{figure}
\centering
\begin{minipage}[b]{1\textwidth}
\centering
\begin{overpic}[width=0.49\textwidth,trim={0mm 0mm 0mm 0mm},clip]{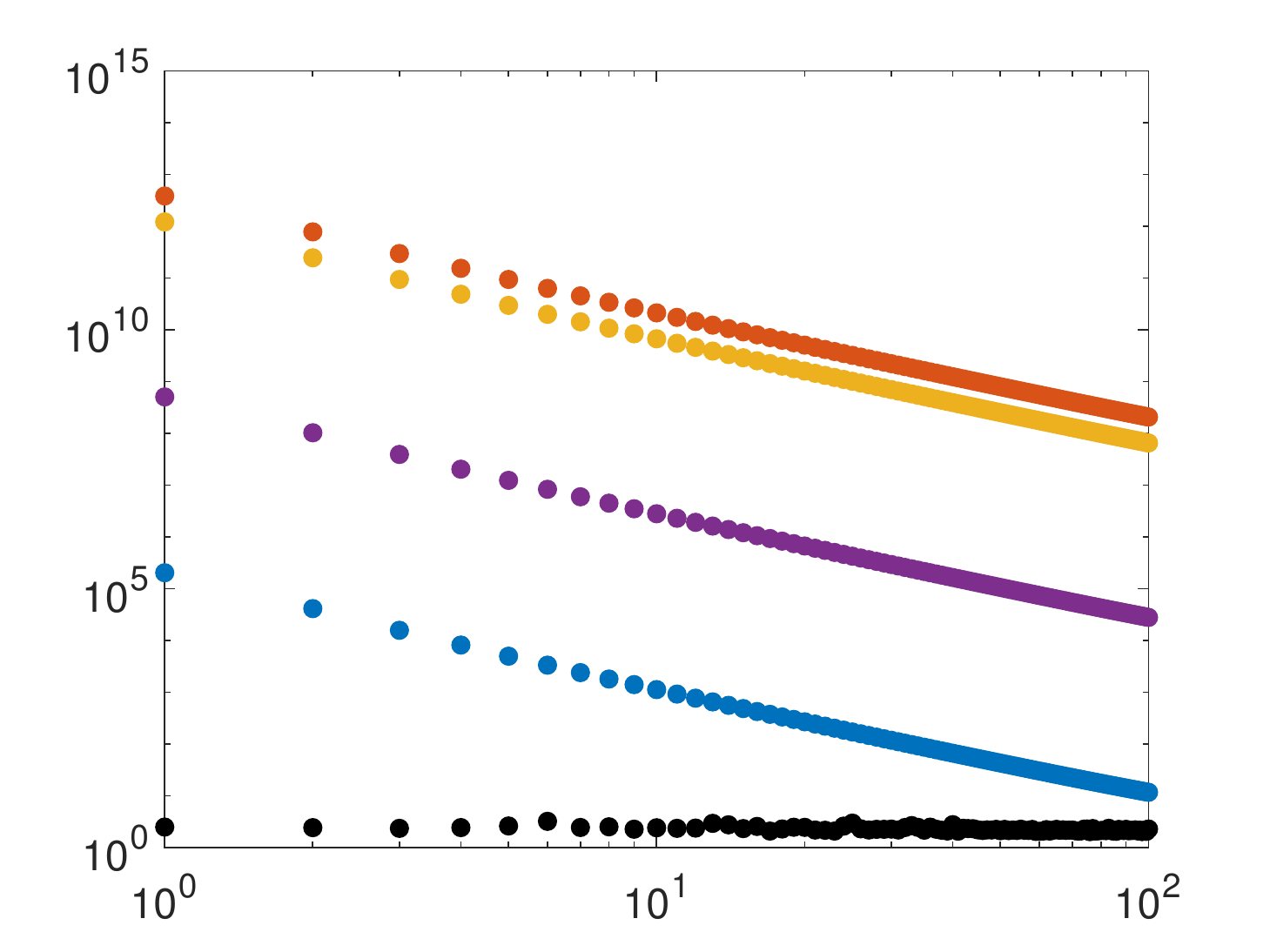}
\put (49,-2) {\small $k$}
\put (20,12) {\small InfBeyn}
\put (20,30) {\small\rotatebox{-12.5}{FEM}}
\put (20,43) {\small\rotatebox{-12}{Ultraspherical spectral}}
\put (20,59) {\small\rotatebox{-12}{Chebyshev collocation}}
\put(44,47){\vector(-2,1){8}}
\put (45,45) {\small\rotatebox{-12}{Legendre Galerkin}}
\end{overpic}
\begin{overpic}[width=0.49\textwidth,trim={0mm 0mm 0mm 0mm},clip]{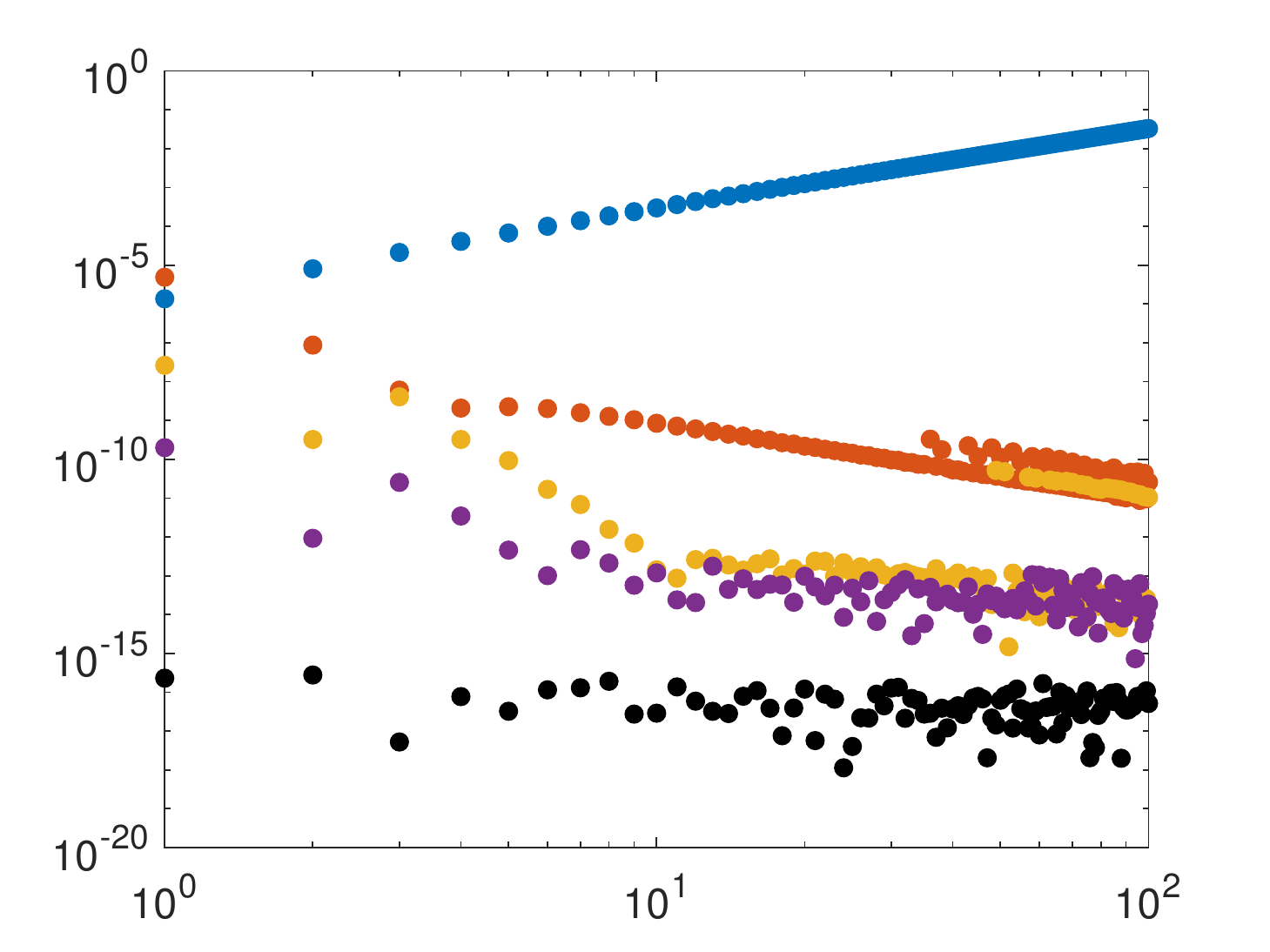}
\put (49,-2) {\small $k$}
\end{overpic}
\end{minipage}
\caption{A comparison of four discretization methods and InfBeyn for~\cref{eq:loadedSpring}. Left: A discretization method can construct a NEP that has severely ill-conditioned eigenvalues. Right: The relative accuracy of the computed eigenvalues.}
\label{fig:loaded_string}
\end{figure}

Since the infinite-dimensional NEP has a Rayleigh quotient that increases monotonically with the spectral parameter, one can show that a linear FEM constructs a discrete NEP whose eigenvalues converge to the spectrum of~\cref{eq:loadedSpring} without spectral pollution or missing eigenvalues~\cite{solov1997finite}. However, discretization can still introduce severe ill-conditioning, potentially but not necessarily, causing inaccurate computed eigenvalues in floating-point arithmetic. 

We consider four methods of discretization and compare them with InfBeyn. The first discretization uses finite elements as proposed in~\cite{solov2006preconditioned}, the second uses a Chebyshev collocation method~\cite{driscoll2016rectangular}, the third uses a standard Galerkin method using Legendre polynomials, and the fourth uses the ultraspherical spectral method~\cite{olver2013fast}. To calculate the accuracy of each discretization method, we compute solutions to~\cref{loaded_string_analytic} using Newton's method with initial guesses provided by the asymptotic formula $\lambda_k\sim (k-1/2)^2\pi^2$ as $k\rightarrow\infty$. This asymptotic formula also guides us in selecting the centers of the contours for InfBeyn. \cref{fig:loaded_string} (left) shows the relative condition numbers of the first 100 eigenvalues of the resulting discrete NEPs (see~\cite[Thm.~2.20]{guttel2017nonlinear} for the condition number formula) for $500\times 500$ discretizations. We also show the corresponding condition numbers for InfBeyn. While the condition numbers are interesting, they give little insight into the final accuracy of the computed eigenvalues (see \cref{fig:loaded_string} (right)), which may be because floating-point rounding errors are causing highly structured perturbations.  Moreover, the relative accuracy of the computed eigenvalues is also due to how fast the eigenvalues of the discretization converge. 

\subsection{Example 6: Planar waveguide}\label{sec:planar_waveguide}
For our last example, we take a look at the NEP that is called \texttt{planar\_waveguide} in the NLEVP collection. This NEP describes the propagation properties of electromagnetic waves in multilayered media, characterized by a refractive index $\eta$ that varies in $x$-direction~\cite{stowell2010variational}. The original problem is a linear eigenvalue problem on the unbounded domain $\mathbb{R}$ that has both discrete and essential spectra~\cite{marcuse2013theory}. 

More precisely, consider a material that consists of $J+1$ layers described by refractive indices $\eta_0,\ldots,\eta_J$ and the positions of the interfaces $x_1=0<x_2<\cdots<x_J=L$, so that $\eta(x)=\eta_0$ if $x<x_1$, $\eta(x)=\eta_j$ if $x_j<x<x_{j+1}$ and $\eta(x)=\eta_J$ if $x>x_J$. The truncated domain is $[x_1,x_J]=[0,L]$. For a frequency $k$, we define $\delta_{\pm}={k^2(\eta_0^2\pm\eta_J^2)}/{2}$. The NEP is given by
\begin{equation}\setlength\abovedisplayskip{6pt}\setlength\belowdisplayskip{6pt}
\begin{gathered}
\frac{d^2\phi}{d x^2}(x)+k^2[\eta^2(x)-\mu(\lambda)]\phi(x)=0,\quad \mu(\lambda)=\frac{\delta_+}{k^2}+\frac{\delta_-}{8k^2\lambda^2}+\frac{\lambda^2}{k^2},\\
\frac{d\phi}{d x}(0)+\left(\frac{\delta_-}{2\lambda}-\lambda\right)\phi(0)=0,\quad \frac{d\phi}{d x}(L)+\left(\frac{\delta_-}{2\lambda}+\lambda\right)\phi(L)=0.
\end{gathered}
\label{eq:WaveGuide}
\end{equation}
We take the default parameters $J=5$, $\eta_0 = 1.5,\eta_1 = 1.66,\eta_2 = 1.6,\eta_3 = 1.53,\eta_4 = 1.66,\eta_5 = 1.0$, $x_2=0.5,x_3=1.0,x_4=1.5,x_5=2.0$ and $k=2\pi/0.6328$.

\begin{figure}
\centering
\begin{minipage}[b]{1\textwidth}
\centering
\begin{overpic}[width=0.49\textwidth,trim={0mm 0mm 0mm 0mm},clip]{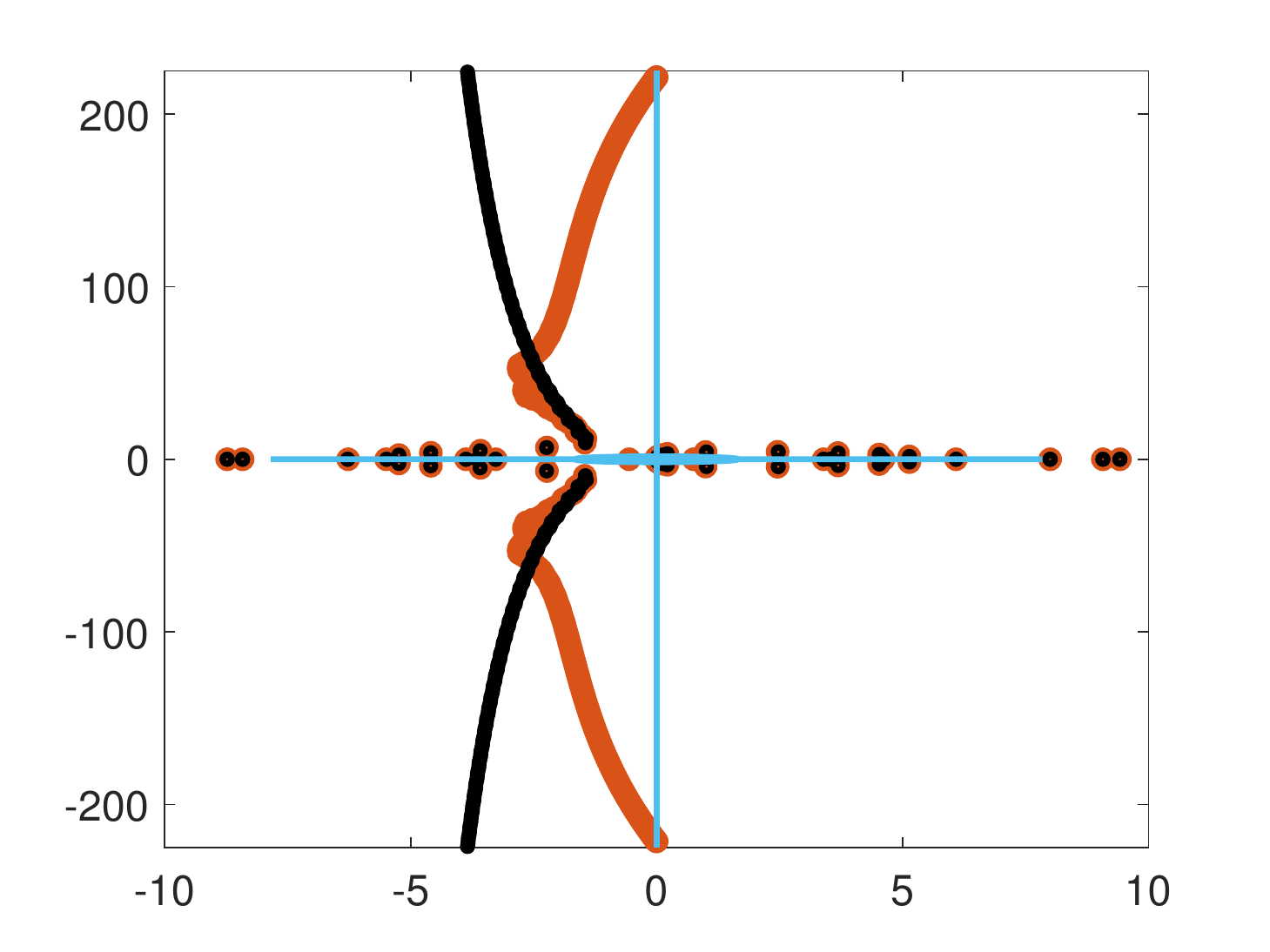}
\put (45,-2) {\small $\mathrm{Re}(\lambda)$}
\put (2,33) {\small\rotatebox{90}{$\mathrm{Im}(\lambda)$}}
\end{overpic}
\begin{overpic}[width=0.49\textwidth,trim={0mm 0mm 0mm 0mm},clip]{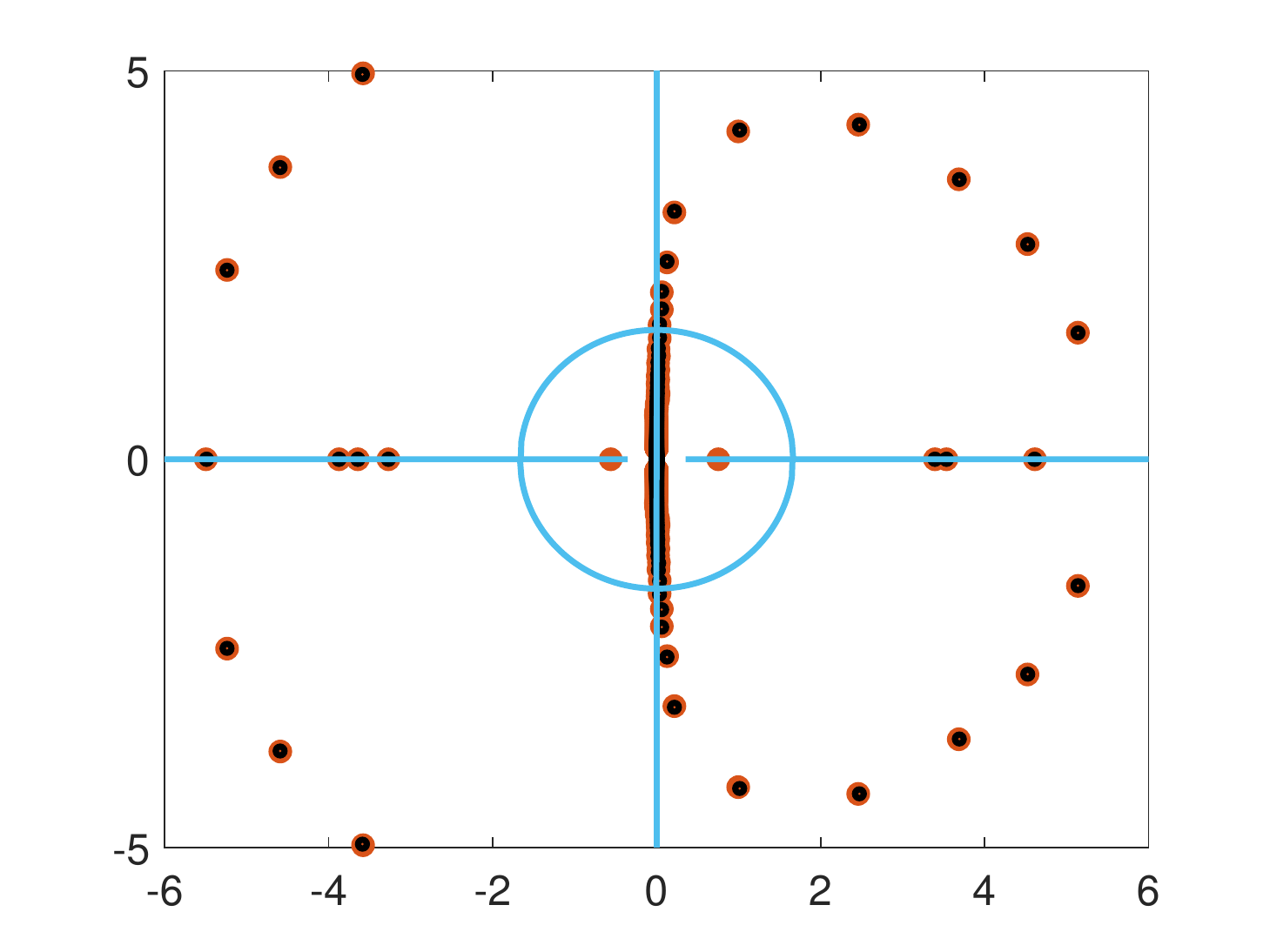}
\put (45,-2) {\small $\mathrm{Re}(\lambda)$}
\put (2,33) {\small\rotatebox{90}{$\mathrm{Im}(\lambda)$}}
\end{overpic}
\end{minipage}\vspace{-3mm}
\caption{Spectra of the planar waveguide problem in the $\lambda$ plane. Left: The eigenvalues of the discretized planar waveguide problem are shown in red. The eigenvalues computed by InfBeyn are shown as black circles and verified using infinite-dimensional residuals. Right: A magnified region near $\lambda=0$. The discretized problem has spurious modes, and several branches of the modes collapse onto the essential spectrum of the underlying problem posed on $\mathbb{R}$ (shown in light blue).}
\label{fig:waveguide1}
\end{figure}

\begin{figure}
\centering
\begin{minipage}[b]{1\textwidth}
\centering
\begin{overpic}[width=0.49\textwidth,trim={0mm 0mm 0mm 0mm},clip]{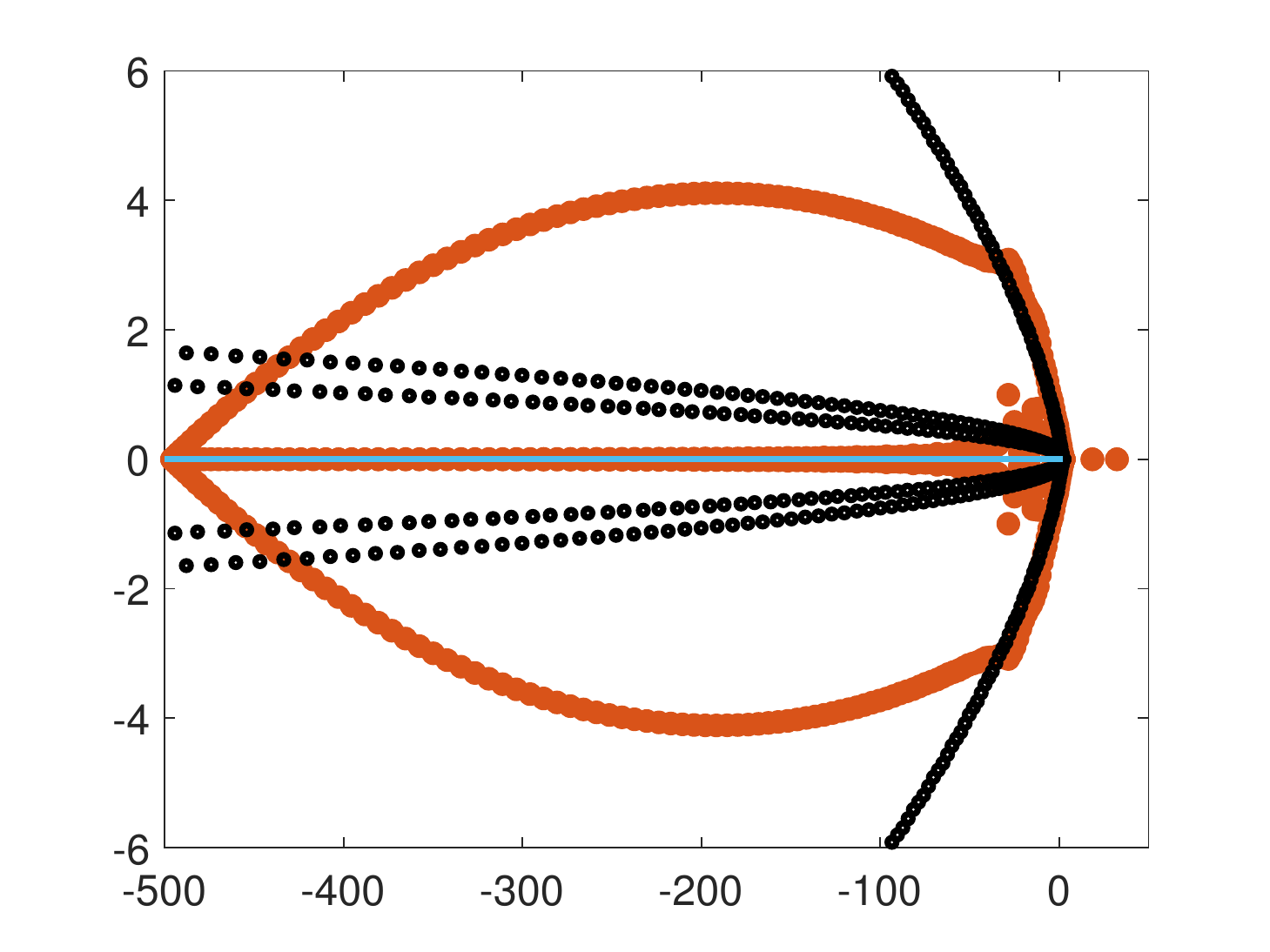}
\put (45,-2) {\small $\mathrm{Re}(\mu)$}
\put (2,33) {\small\rotatebox{90}{$\mathrm{Im}(\mu)$}}
\put (25,10) {\small\rotatebox{0}{spurious modes}}
\linethickness{1pt}
\put (63,13){\color{blue}\vector(1,1){24}}
\put (15,65) {\small\rotatebox{0}{collapse onto ess. spec.}}
\put (20,63){\color{blue}\vector(1,-2){12}}
\end{overpic}
\begin{overpic}[width=0.49\textwidth,trim={0mm 0mm 0mm 0mm},clip]{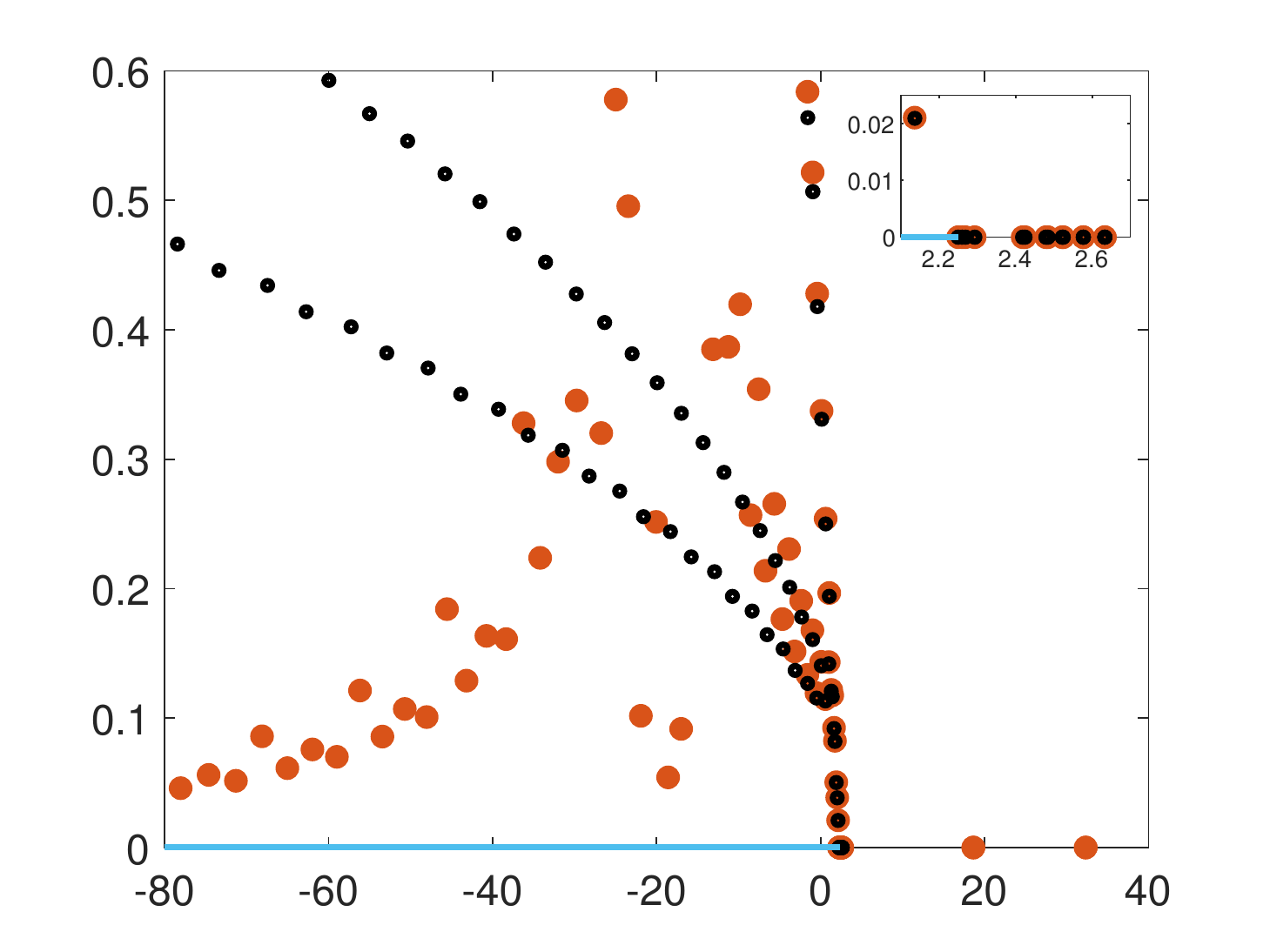}
\put (45,-2) {\small $\mathrm{Re}(\mu)$}
\put (1,33) {\small\rotatebox{90}{$\mathrm{Im}(\mu)$}}
\end{overpic}
\end{minipage}\vspace{-3mm}
\caption{Same as \cref{fig:waveguide1} but in the $\mu(\lambda)$ plane. The essential spectrum of the underlying problem posed on $\mathbb{R}$ is the semi-infinite interval $\mu\in(-\infty,\max\{\eta_0^2,\eta_J^2\}]$.}
\label{fig:waveguide2}
\end{figure}

We discretize~\cref{eq:WaveGuide} using piecewise linear finite elements.  \cref{fig:waveguide1} shows the eigenvalues computed using finite elements and a discretization size of $n=129$ (NLEVP's default value), as well as those computed using InfBeyn. We also show the essential spectrum of the linear problem on the unbounded domain, which is given by $\lambda$ such that $\mu(\lambda)\in (-\infty,\max\{\eta_0^2,\eta_J^2\}]$.\footnote{To see this, one considers the altered linear problem on $\mathbb{R}$ with $\eta_1=\cdots=\eta_J$, where the essential spectrum is $\mu(\lambda)\in (-\infty,\max\{\eta_0^2,\eta_J^2\}]$. One then shows that the resolvents of the altered problem differ from the original by a compact operator~\cite[p.~244]{kato2013perturbation}.} The majority of the eigenvalues of the discretized NEP are inaccurate, and there are also spurious guided modes. Many modes have collapsed onto the essential spectrum of the linear eigenvalue problem posed on $\mathbb{R}$. This issue is common with discretized problems of this type~\cite[Fig.~3]{gopalakrishnan2022computing}. In contrast, $\lambda=0$ is the only point in the essential spectrum of the NEP. In the right panel of~\cref{fig:waveguide1}, we see the accumulation of the discrete spectrum at $\lambda=0$, which also causes issues for the discretized problem. \cref{fig:waveguide2} shows a similar plot in the $\mu(\lambda)$ plane.

As a final experiment, we show a region near the essential spectrum at $\lambda=0$ (see~\cref{fig:waveguide3} (left)). We see clustering of the spectrum at this point, computed using InfBeyn. On the right, we have shown convergence to three eigenvalues as the number of quadrature nodes increases, where a circular contour around several eigenvalues is used. This demonstrates the effectiveness of infinite-dimensional contour methods such as InfBeyn for problems with accumulating spectra, owing to their locality, parallelizability, and rapid convergence.

\begin{figure}
\centering
\begin{minipage}[b]{1\textwidth}
\centering
\begin{overpic}[width=0.49\textwidth,trim={0mm 0mm 0mm 0mm},clip]{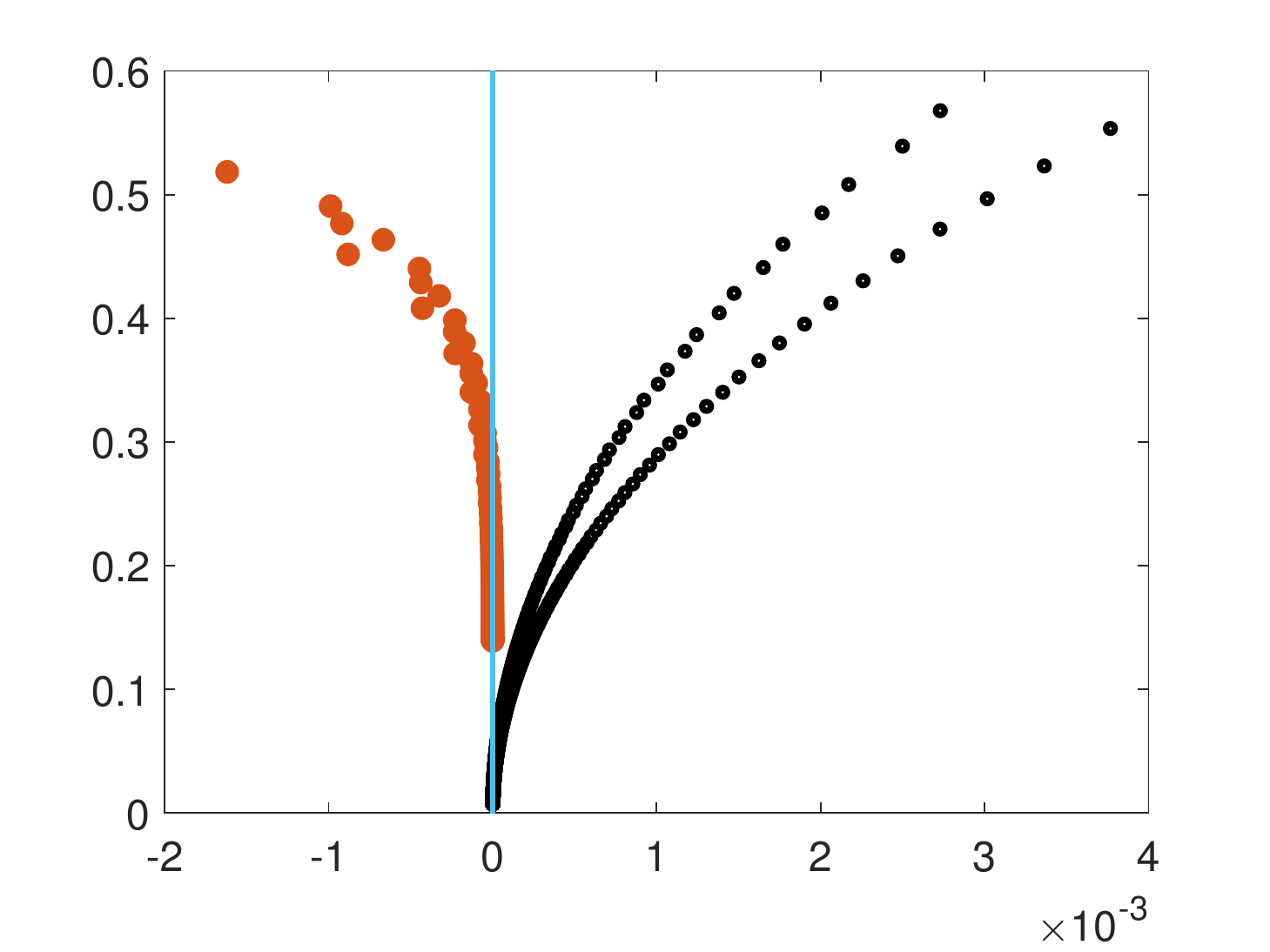}
\put (45,-2) {\small $\mathrm{Re}(\lambda)$}
\put (0,35) {\small\rotatebox{90}{$\mathrm{Im}(\lambda)$}}
\end{overpic}
\begin{overpic}[width=0.49\textwidth,trim={0mm 0mm 0mm 0mm},clip]{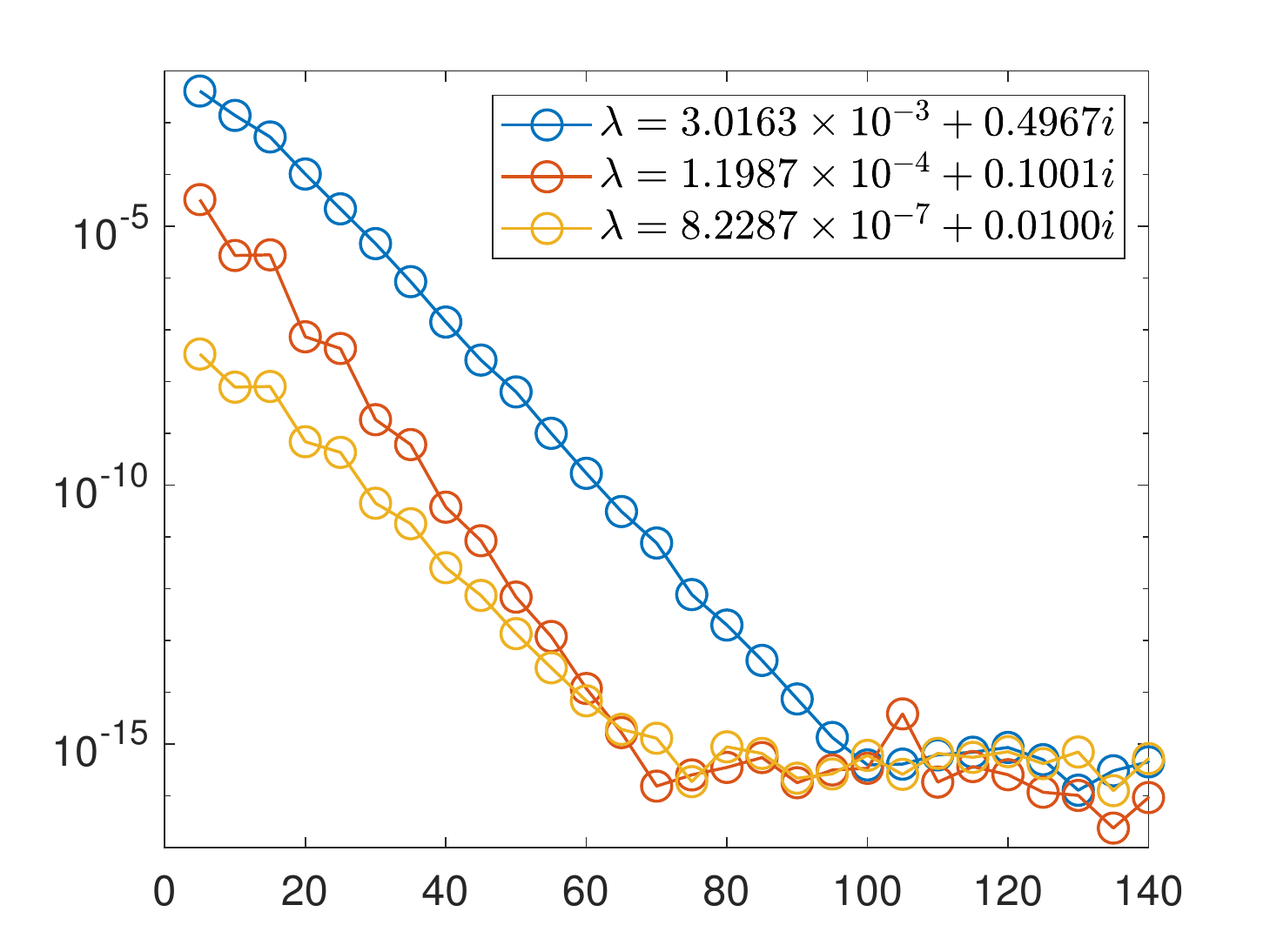}
\put (49,-2) {\small $\ell$}
\put (35,73) {\small $|\lambda-\lambda^{(\ell)}|/\sqrt{|\mu(\lambda)|}$}
\end{overpic}
\end{minipage}\vspace{-3mm}
\caption{Left: Eigenvalues accumulating at the essential spectral point $\lambda=0$. Right: Relative accuracy of the computed eigenvalues $\lambda^{(\ell)}$ for $\ell$ quadrature points in~\cref{eqn:approx_Beyn}. We have relativized by $\sqrt{|\mu(\lambda)|}$ to capture the different rational scalings of $\lambda$ in the NEP.}
\label{fig:waveguide3}
\end{figure}

\section{Conclusion}\label{sec:Conclusion}
As we show with six examples, discretizing infinite-dimensional NEPs can modify, destabilize, or destroy eigenvalues. By delaying discretization, we proposed practical algorithms for computing spectra and pseudospectra of infinite-dimensional NEPs that avoid these issues, and we proved their stability and convergence. We hope that the paper is the beginning of infinite-dimensional NEP solvers. For example, while InfBeyn deals with regions where the spectrum is discrete and our method for computing pseudospectra can deal with essential spectrum, we imagine that there is an infinite-dimensional algorithm for computing essential spectra of NEPs directly. We are also intrigued by the consequences for down-the-line applications such as reduced order models~\cite{mehrmann2016eigenvalue,grabner2016numerical,brennan2020contour} and developing structure-preserving infinite-dimensional solvers. Infinite-dimensional NEP solvers offer the potential for more robust calculations of physically-relevant spectra in challenging applications. 

\renewcommand{\baselinestretch}{0.89}
\bibliography{references}

\begin{thebibliography}{10}

\bibitem{spiders_embree}
{\sc J.~P. Baker, M.~Embree, and J.~N. Green}, {\em Networks of vibrating
  strings: A model nonlinear eigenvalue problem}, in preparation,  (2023).

\bibitem{beer1993topologies}
{\sc G.~Beer}, {\em Topologies on closed and closed convex sets}, vol.~268,
  Springer Science \& Business Media, 1993.

\bibitem{betcke2013nlevp}
{\sc T.~Betcke, N.~J. Higham, V.~Mehrmann, C.~Schr{\"o}der, and F.~Tisseur},
  {\em {NLEVP}: {A} collection of nonlinear eigenvalue problems}, ACM Trans.
  Math. Soft., 39 (2013), pp.~1--28.

\bibitem{betcke2005reviving}
{\sc T.~Betcke and L.~N. Trefethen}, {\em Reviving the method of particular
  solutions}, SIAM Rev., 47 (2005), pp.~469--491.

\bibitem{beyn2012integral}
{\sc W.-J. Beyn}, {\em An integral method for solving nonlinear eigenvalue
  problems}, Lin. Alg. Appl., 436 (2012), pp.~3839--3863.

\bibitem{bindel2015localization}
{\sc D.~Bindel and A.~Hood}, {\em Localization theorems for nonlinear
  eigenvalue problems}, SIAM Rev., 57 (2015), pp.~585--607.

\bibitem{bindel2006theory}
{\sc D.~Bindel and M.~Zworski}, {\em Theory and computation of resonances in 1d
  scattering},  (2006).

\bibitem{bogli2020essential}
{\sc S.~B{\"o}gli, M.~Marletta, and C.~Tretter}, {\em The essential numerical
  range for unbounded linear operators}, J. Func. Anal., 279 (2020), p.~108509.

\bibitem{botchev2009svd}
{\sc M.~Botchev, G.~Sleijpen, and A.~Sopaheluwakan}, {\em An {SVD}-approach to
  {J}acobi--{D}avidson solution of nonlinear {H}elmholtz eigenvalue problems},
  Lin. Alg. Appl., 431 (2009), pp.~427--440.

\bibitem{boulle2022a}
{\sc N.~Boulle and A.~Townsend}, {\em A generalization of the randomized
  singular value decomposition}, in International Conference on Learning
  Representations, 2022.

\bibitem{boulle2022learning}
{\sc N.~Boull{\'e} and A.~Townsend}, {\em Learning elliptic partial
  differential equations with randomized linear algebra}, Foundations of
  Computational Mathematics,  (2022), pp.~1--31.

\bibitem{brennan2020contour}
{\sc M.~C. Brennan, M.~Embree, and S.~Gugercin}, {\em Contour integral methods
  for nonlinear eigenvalue problems: {A} systems theoretic approach}, arXiv
  preprint arXiv:2012.14979,  (2020).

\bibitem{chaitin2006analysis}
{\sc F.~Chaitin-Chatelin and M.~B. van Gijzen}, {\em Analysis of parameterized
  quadratic eigenvalue problems in computational acoustics with homotopic
  deviation theory}, Numer. Lin. Alg. Appl., 13 (2006), pp.~487--512.

\bibitem{chen2016perturbation}
{\sc Y.~M. Chen, X.~S. Chen, and W.~Li}, {\em On perturbation bounds for
  orthogonal projections}, Numer. Algor., 73 (2016), pp.~433--444.

\bibitem{colbrook2020foundations}
{\sc M.~Colbrook}, {\em The foundations of infinite-dimensional spectral
  computations}, PhD thesis, University of Cambridge, 2020.

\bibitem{GithubRepo}
{\sc M.~J. Colbrook}, {\em inf{NEP}}.
\newblock \url{https://github.com/MColbrook/infNEP}, 2023.

\bibitem{colbrook2022foundations}
{\sc M.~J. Colbrook and A.~C. Hansen}, {\em The foundations of spectral
  computations via the solvability complexity index hierarchy}, J. Euro. Math.
  Soc.,  (2022).

\bibitem{colbrook2019compute}
{\sc M.~J. Colbrook, B.~Roman, and A.~C. Hansen}, {\em How to compute spectra
  with error control}, Phys. Rev. Lett., 122 (2019), p.~250201.

\bibitem{colbrook2021rigorous}
{\sc M.~J. Colbrook and A.~Townsend}, {\em Rigorous data-driven computation of
  spectral properties of {K}oopman operators for dynamical systems}, arXiv
  preprint arXiv:2111.14889,  (2021).

\bibitem{collar1987matrices}
{\sc A.~R. Collar and A.~Simpson}, {\em Matrices and engineering dynamics},
  Ellis Horwood, 1987.

\bibitem{di1969completeness}
{\sc R.~Di~Prima and G.~Habetler}, {\em A completeness theorem for
  non-selfadjoint eigenvalue problems in hydrodynamic stability}, Archive for
  Rational Mechanics and Analysis, 34 (1969), pp.~218--227.

\bibitem{driscoll2016rectangular}
{\sc T.~A. Driscoll and N.~Hale}, {\em Rectangular spectral collocation}, IMA
  J. Numer. Anal., 36 (2016), pp.~108--132.

\bibitem{engstrom2010complex}
{\sc C.~Engstr{\"o}m and M.~Wang}, {\em Complex dispersion relation
  calculations with the symmetric interior penalty method}, Inter. J. Numer.
  Meth. Eng., 84 (2010), pp.~849--863.

\bibitem{gopalakrishnan2022computing}
{\sc J.~Gopalakrishnan, B.~Q. Parker, and P.~Vandenberge}, {\em Computing leaky
  modes of optical fibers using a {FEAST} algorithm for polynomial
  eigenproblems}, Wave Motion, 108 (2022), p.~102826.

\bibitem{grabner2016numerical}
{\sc N.~Gr{\"a}bner, V.~Mehrmann, S.~Quraishi, C.~Schr{\"o}der, and U.~von
  Wagner}, {\em Numerical methods for parametric model reduction in the
  simulation of disk brake squeal}, ZAMM-J. Appl. Math. Mech., 96 (2016),
  pp.~1388--1405.

\bibitem{green2006pseudospectra}
{\sc K.~Green and T.~Wagenknecht}, {\em Pseudospectra and delay differential
  equations}, J. Comput. Appl. Math., 196 (2006), pp.~567--578.

\bibitem{grigorieff1973approximation}
{\sc R.~D. Grigorieff and H.~Jeggle}, {\em Approximation von
  {E}igenwertproblemen bei nichtlinearer {P}arameterabh{\"a}ngigkeit},
  Manuscripta Mathematica, 10 (1973), pp.~245--271.

\bibitem{grosch1978continuous}
{\sc C.~E. Grosch and H.~Salwen}, {\em The continuous spectrum of the
  {Orr--Sommerfeld} equation. {P}art 1. {T}he spectrum and the eigenfunctions},
  J. Fluid Mech., 87 (1978), pp.~33--54.

\bibitem{guttel2017nonlinear}
{\sc S.~G{\"u}ttel and F.~Tisseur}, {\em The nonlinear eigenvalue problem},
  Acta Numer., 26 (2017), pp.~1--94.

\bibitem{harari1996recent}
{\sc I.~Harari, K.~Grosh, T.~Hughes, M.~Malhotra, P.~Pinsky, J.~Stewart, and
  L.~Thompson}, {\em Recent developments in finite element methods for
  structural acoustics}, Arch. Comput. Meth. Eng., 3 (1996), pp.~131--309.

\bibitem{higham1998structured}
{\sc D.~J. Higham and N.~J. Higham}, {\em Structured backward error and
  condition of generalized eigenvalue problems}, SIAM J. Matrix Anal. Appl., 20
  (1998), pp.~493--512.

\bibitem{higham2008scaling}
{\sc N.~J. Higham, D.~S. Mackey, F.~Tisseur, and S.~D. Garvey}, {\em Scaling,
  sensitivity and stability in the numerical solution of quadratic eigenvalue
  problems}, Inter. J. Numer. Meth. Eng., 73 (2008), pp.~344--360.

\bibitem{higham2002more}
{\sc N.~J. Higham and F.~Tisseur}, {\em More on pseudospectra for polynomial
  eigenvalue problems and applications in control theory}, Lin. Alg. Appl., 351
  (2002), pp.~435--453.

\bibitem{horning2020feast}
{\sc A.~Horning and A.~Townsend}, {\em Feast for differential eigenvalue
  problems}, SIAM J. Numer. Anal., 58 (2020), pp.~1239--1262.

\bibitem{jarlebring2008spectrum}
{\sc E.~Jarlebring}, {\em The spectrum of delay-differential equations:
  {N}umerical methods, stability and perturbation}, PhD thesis, 2008.

\bibitem{jeggle1977discrete}
{\sc H.~Jeggle and W.~Wendland}, {\em On the discrete approximation of
  eigenvalue problems with holomorphic parameter dependence}, Proc. Roy. Soc.
  Edin. Sec. A: Math., 78 (1977), pp.~1--29.

\bibitem{kato1958perturbation}
{\sc T.~Kato}, {\em Perturbation theory for nullity, deficiency and other
  quantities of linear operators}, J. d’Analyse Math., 6 (1958),
  pp.~261--322.

\bibitem{kato2013perturbation}
{\sc T.~Kato}, {\em Perturbation Theory for Linear Operators}, vol.~132,
  Springer Science \& Business Media, second~ed., 1976.

\bibitem{keldysh1951characteristic}
{\sc M.~V. Keldysh}, {\em On the characteristic values and characteristic
  functions of certain classes of non-self-adjoint equations}, in Dokl. Akad.
  Nauk SSSR, vol.~77, 1951, pp.~11--14.

\bibitem{keldysh1971completeness}
\leavevmode\vrule height 2pt depth -1.6pt width 23pt, {\em On the completeness
  of the eigenfunctions of some classes of non-selfadjoint linear operators},
  Uspekhi matematicheskikh nauk, 26 (1971), pp.~15--41.

\bibitem{kukelova2008polynomial}
{\sc Z.~Kukelova, M.~Bujnak, and T.~Pajdla}, {\em Polynomial eigenvalue
  solutions to the 5-pt and 6-pt relative pose problems.}, in BMVC, vol.~2,
  2008, p.~2008.

\bibitem{lancaster2002lambda}
{\sc P.~Lancaster}, {\em Lambda-matrices and vibrating systems}, Courier
  Corporation, 2002.

\bibitem{liao2010nonlinear}
{\sc B.-S. Liao, Z.~Bai, L.-Q. Lee, and K.~Ko}, {\em Nonlinear
  {R}ayleigh--{R}itz iterative method for solving large scale nonlinear
  eigenvalue problems}, Taiwanese J. Math., 14 (2010), pp.~869--883.

\bibitem{marcuse2013theory}
{\sc D.~Marcuse}, {\em Theory of dielectric optical waveguides}, Elsevier,
  2013.

\bibitem{mehrmann2011nonlinear}
{\sc V.~Mehrmann and C.~Schr{\"o}der}, {\em Nonlinear eigenvalue and frequency
  response problems in industrial practice}, J. Math. Ind., 1 (2011),
  pp.~1--18.

\bibitem{mehrmann2016eigenvalue}
{\sc V.~Mehrmann and C.~Schroder}, {\em Eigenvalue analysis and model reduction
  in the treatment of disc brake squeal}, SIAM News, 49 (2016), pp.~1--3.

\bibitem{mehrmann2002polynomial}
{\sc V.~Mehrmann and D.~Watkins}, {\em Polynomial eigenvalue problems with
  {H}amiltonian structure.}, Electr. Trans. Numer. Anal., 13 (2002),
  pp.~106--118.

\bibitem{mennicken2003non}
{\sc R.~Mennicken and M.~M{\"o}ller}, {\em Non-self-adjoint boundary eigenvalue
  problems}, vol.~192, Gulf Professional Publishing, 2003.

\bibitem{michiels2006pseudospectra}
{\sc W.~Michiels, K.~Green, T.~Wagenknecht, and S.-I. Niculescu}, {\em
  Pseudospectra and stability radii for analytic matrix functions with
  application to time-delay systems}, Lin. Alg. Appl., 418 (2006),
  pp.~315--335.

\bibitem{olver2013fast}
{\sc S.~Olver and A.~Townsend}, {\em A fast and well-conditioned spectral
  method}, SIAM Rev., 55 (2013), pp.~462--489.

\bibitem{orszag1971accurate}
{\sc S.~A. Orszag}, {\em Accurate solution of the {Orr--Sommerfeld} stability
  equation}, J. Fluid Mech., 50 (1971), pp.~689--703.

\bibitem{reddy1993pseudospectra}
{\sc S.~C. Reddy, P.~J. Schmid, and D.~S. Henningson}, {\em Pseudospectra of
  the {Orr--Sommerfeld} operator}, SIAM J. Appl. Math., 53 (1993), pp.~15--47.

\bibitem{sakoda2001photonic}
{\sc K.~Sakoda, N.~Kawai, T.~Ito, A.~Chutinan, S.~Noda, T.~Mitsuyu, and
  K.~Hirao}, {\em Photonic bands of metallic systems. {I}. {P}rinciple of
  calculation and accuracy}, Phys. Rev. B, 64 (2001), p.~045116.

\bibitem{schafke1965s}
{\sc F.~W. Sch{\"a}fke and A.~Schneider}, {\em S-hermitesche
  rand-eigenwertprobleme. {I}}, Mathematische Annalen, 162 (1965), pp.~9--26.

\bibitem{schmid2002stability}
{\sc P.~J. Schmid, D.~S. Henningson, and D.~Jankowski}, {\em Stability and
  transition in shear flows. applied mathematical sciences, vol. 142}, Appl.
  Mech. Rev., 55 (2002), pp.~B57--B59.

\bibitem{solov1997finite}
{\sc S.~I. Solov'ev}, {\em The finite-element method for symmetric nonlinear
  eigenvalue problems}, Zhurnal Vychislitel'noi Matematiki i Matematicheskoi
  Fiziki, 37 (1997), pp.~1311--1318.

\bibitem{solov2006preconditioned}
{\sc S.~I. Solov’{\"e}v}, {\em Preconditioned iterative methods for a class
  of nonlinear eigenvalue problems}, Lin. Alg. Appl., 415 (2006), pp.~210--229.

\bibitem{steinbach2009boundary}
{\sc O.~Steinbach and G.~Unger}, {\em A boundary element method for the
  {D}irichlet eigenvalue problem of the {L}aplace operator}, Numer. Math., 113
  (2009), pp.~281--298.

\bibitem{stowell2010variational}
{\sc D.~Stowell and J.~Tausch}, {\em Variational formulation for guided and
  leaky modes in multilayer dielectric waveguides}, Comm. Comput. Phys., 7
  (2010), p.~564.

\bibitem{tisseur2000backward}
{\sc F.~Tisseur}, {\em Backward error and condition of polynomial eigenvalue
  problems}, Lin. Alg. Appl., 309 (2000), pp.~339--361.

\bibitem{tisseur2001structured}
{\sc F.~Tisseur and N.~J. Higham}, {\em Structured pseudospectra for polynomial
  eigenvalue problems, with applications}, SIAM J. Matrix Anal. Appl., 23
  (2001), pp.~187--208.

\bibitem{townsend2015continuous}
{\sc A.~Townsend and L.~N. Trefethen}, {\em Continuous analogues of matrix
  factorizations}, Proc. Roy. Soc. A: Math., Phys. Eng. Sci., 471 (2015),
  p.~20140585.

\bibitem{trefethen1999computation}
{\sc L.~N. Trefethen}, {\em Computation of pseudospectra}, Acta Numer., 8
  (1999), pp.~247--295.

\bibitem{trefethen2005spectra}
{\sc L.~N. Trefethen and M.~Embree}, {\em Spectra and pseudospectra: the
  behavior of nonnormal matrices and operators}, Princeton University Press,
  2005.

\bibitem{voss2002rational}
{\sc H.~Voss}, {\em A rational spectral problem in fluid-solid vibration},
  Preprints des Institutes f{\"u}r Mathematik,  (2002).

\bibitem{wagenknecht2008structured}
{\sc T.~Wagenknecht, W.~Michiels, and K.~Green}, {\em Structured pseudospectra
  for nonlinear eigenvalue problems}, J. Comput. Appl. Math., 212 (2008),
  pp.~245--259.

\bibitem{contHutch}
{\sc J.~Zvonek, A.~Horning, and A.~Townsend}, {\em conthutch++: Stochastic
  trace estimation for functions}, in preparation,  (2023).

\end{thebibliography}
\bibliographystyle{siam}

\end{document}